\documentclass[12pt,reqno]{amsart}
\usepackage{amsthm,amsfonts,amssymb,euscript}
\usepackage{graphicx}
\usepackage{comment}

\def\alphab{\underline{\alpha}}
\def\betab{\underline{\beta}}
\def\chib{\underline{\chi}}
\def\chibh{\hat{\underline{\chi}}}
\def\chih{\hat{\chi}}

\def\divergence{\text{div}\,}
\def\Divergence{\text{Div}\,}
\def\D{\mathcal{D}}
\def\etab{\underline{\eta}}
\def\Hb{\underline{H}}
\def\Lb{\underline{L}}

\def\tr{\text{tr}}
\def\omegab{\underline{\omega}}

\def\tensor{\widehat{\otimes}}
\def\ub{\underline{u}}
\def\O{\mathcal{O}}

\def\Lh{\widehat{\mathcal{L}}}

\def\doubleint{\int\!\!\!\!\!\int}

\def\OSzerop{\mathcal{O}_{0,p}}

\def\OSzerofour{\mathcal{O}_{0,4}}

\def\OSonep{\mathcal{O}_{1,p}}

\def\OHtwo{{}^{(H)}\!\mathcal{O}}
\def\OHbtwo{{}^{(\Hb)}\!\mathcal{O}}

\def\Ozeroinfinity{\mathcal{O}_{0,\infty}}
\def\Ozerofour{\mathcal{O}_{0,4}}
\def\Ozerotwo{\mathcal{O}_{0,2}}
\def\Oonetwo{\mathcal{O}_{1,2}}
\def\Oonefour{\mathcal{O}_{1,4}}
\def\Ozero{{\mathcal{O}^{(0)}}}

\def\Rinitial{{\mathcal{R}^{(0)}}}
\def\Oinitial{{\mathcal{O}^{(0)}}}

\def\R{\mathcal{R}}
\def\Rb{\underline{\mathcal{R}}}

\def\Rzero{\mathcal{R}_0}
\def\Rone{\mathcal{R}_1}
\def\Rzerob{\underline{\mathcal{R}}_0}
\def\Roneb{\underline{\mathcal{R}}_1}

\def\Izero{\mathcal{I}_0}


\def\Wstar{{}^*W}
\def\Rstar{{}^*\!R}
\def\JNstar{{}^*\!J^{(N)}}
\def\Jfour{J^{(4)}}

\def\Jfourstar{{}^*\!J^{(4)}}

\def\DNRstar{{}^*\!D_N R}

\def\Jstar{{}^*\!J}
\def\trchibt{\widetilde{\tr \chib}}

\newtheorem{theorem}{Theorem}[section]
\newtheorem{lemma}[theorem]{Lemma}
\newtheorem{proposition}[theorem]{Proposition}
\newtheorem{corollary}[theorem]{Corollary}

\newtheorem{remark}[theorem]{Remark}

\newtheorem*{theoremC}{Main Estimates}
\newtheorem*{theoremD}{Existence Theorem}
\setlength{\textwidth}{16cm} \setlength{\oddsidemargin}{0cm}
\setlength{\evensidemargin}{0cm}
\numberwithin{equation}{section}

\begin{document}

\title[Gravitational Collapse]{Energy Estimates and Gravitational Collapse}
\author[Pin Yu]{Pin Yu}
\address{Mathematical Sciences Center\\ Tsinghua University\\ Beijing, China}
\email{pin@math.tsinghua.edu.cn}
\thanks{The author is deeply indebted to Professor \emph{Sergiu Klainerman} and \emph{Igor Rodnianski} for many fruitful discussions on the problem. The author also would like to acknowledge Professor \emph{Demetrios Christodoulou} and \emph{Shing-Tung Yau} for explaining the insights and continuous encouragements. This work was partly done when the author was visiting Harvard University. He would like to thank the Department of Mathematics at Harvard University for their hospitality.}

\begin{abstract}
In this note, we study energy estimates for Einstein vacuum equations in order to prove the formation of black holes along evolutions. The novelty of the paper is that, we completely avoid using rotation vector fields to establish the global existence theorem of the solution. More precisely, we use only canonical null directions as commutators to derive energy estimates at the level of one derivatives of null curvature components. We show that, thanks to the special cancelations coming from the null structure of non-linear interactions, desirable estimates on curvatures can be derived under the short pulse ansatz due to Klainerman and Rodnianski \cite{K-R-09} (which is originally discovered by Christodoulou \cite{Ch}).
\end{abstract}
\maketitle
\setcounter{tocdepth}{1}
\tableofcontents
\section{Introduction}\label{introduction}
\subsection{A Brief History}
Penrose singularity theorem states that if in addition to the dominant energy condition, the space-time has a trapped surface, then the space-time contains \emph{singularities}. The \emph{weak cosmic censorship} conjecture asserts that under reasonable physical assumptions, singularities should be hidden from an observer at infinity by the event horizon of a black hole. Thus, by combining these two claims, to predict the existence of black holes, it suffices to exhibit one trapped surface in a space-time,. In other words, although many supplementary conditions are required, we regard the existence of a trapped surface as the presence of a black hole.

A major challenge in general relativity is to understand how trapped surfaces actually form due to the focusing of gravitational waves. In a recent breakthrough \cite{Ch}, Demetrios Christodoulou gave an answer to this long standing problem. He discovered a mechanism which is responsible for the dynamical formation of trapped surfaces in vacuum space-times. In the monograph \cite{Ch}, in addition to the Minkowskian flat data on a incoming null hypersurface, Christodoulou identified an open set of initial data (this is the \emph{short pulse} ansatz) on a outgoing null hypersurfaces. Based on the techniques developed by himself and Klainerman in the proof of the global stability of the Minkowski space-times \cite{Ch-K}, he managed to understand the whole picture of how various geometric quantities interact along the evolution. Once the estimates on curvatures are established in a large region of the space-time, the actual formation of trapped surfaces is easy to demonstrate. Christodoulou also proved a version of the same result for the short pulse data prescribed on past null infinity. This miraculous work provides the first global \emph{large data} result in general relativity (without symmetry assumptions) and opens the gate for many new developments on dynamical problems related to black holes.

In \cite{K-R-09}, Klainerman and Rodnianski extended aforementioned result of Christodoulou. They significantly simplified the proof of Christodoulou (from about six hundred pages to one hundred and twenty). They also enlarged the admissible set of initial conditions and show that the corresponding propagation estimates of connection coefficients and curvatures are much easier to derive. The relaxation of the propagation estimates are just enough to guarantee that a trapped surface still forms. Based on the trace estimates developed in a sequence of work of the authors towards the critical local well-posedness for Einstein vacuum equations, they reduced the number of derivatives needed of Christodoulou in the argument from two derivatives of the curvature (in Christodoulou's proof) to just one. More importantly, Klainerman and Rodnianski introduced a parabolic scaling in \cite{K-R-09} which is incorporated into Lebesgue norms and Sobolev norms. These new techniques allow them to capture the hidden \emph{smallness} of the nonlinear interactions among different small or large components of various geometric objects. The result of Klainerman and Rodnianski can be easily localized with respect to angular sectors, has the potential for further developments, see \cite{K-R-10}. We remark that Klainerman and Rodnianski only concentrated on the problem on a finite region. The question from past null infinity can be solved in a similar manner as in \cite{Ch} once one understand the picture on a finite region. The problem from past null infinity has been studied in a recent work by Reiterer and Trubowitz, \cite{R-T}.

\subsection{Novelty of the Paper}
One common feature of the proofs in \cite{Ch} and \cite{K-R-09} is that, in order to derive energy estimates on one or higher derivatives of curvature components, they all constructed three \emph{angular momentum vector fields} $O^{(1)}$, $O^{(2)}$ and $O^{(3)}$ which essentially captured almost rotational symmetry of the space-time. As far as the author aware, the main reason of using $O^{(i)}$'s is that the energy estimates on $(\Lh_{O^{(i)}} R)_{\alpha\beta\gamma\delta}$ behave well because one can take advantage of cancelations from the pseudo-symmetry of $O^{(i)}$'s. Here, the $\Lh_{O^{(i)}}$ is the \emph{modified Lie derivative} defined in \cite{Ch-K} and $R_{\alpha\beta\gamma\delta}$ is the curvature of the space-time. We can observe this advantage in the proof of stability of Minknowski space-time \cite{Ch-K} where $(\Lh_{O^{(i)}} R)_{\alpha\beta\gamma\delta}$ yields better decay estimates.

There is one obvious defect of the modified Lie derivative. As usual Lie derivatives, the $\Lh_{O^{(i)}}$ is \emph{not} tensorial in $O^{(i)}$. In fact, it evolves one derivative of $O^{(i)}$. In other words, if we use modified Lie derivatives, we may lose immediately one derivative. We have two remedies to this loss of derivatives: in \cite{Ch}, one relies on higher order derivative estimates; in \cite{K-R-09}, one makes use of more subtle trace estimates.

The above discussion can be summarized as follows: roughly speaking,  a good estimate on Lie derivatives relies on the almost symmetries $O^{(i)}$'s, but Lie derivatives causes a loss of derivative as we just explained. Hence, to avoid this loss, we shall give up the use of the pseudo-symmetries.

In this paper, we propose an approach to derive energy estimates on curvatures without constructing rotational vector fields at all.  In particular, in stead of using modified Lie derivatives, we work with covariant derivatives to save one derivative. Of course,  we have to pay a a price of controlling much more error terms. Nevertheless, this significantly simplifies the proofs compared to either \cite{Ch} or \cite{K-R-09}. In particular, compared to \cite{Ch}, we use only one derivative in curvature; compared to \cite{K-R-09}, instead of using trace inequality, all the estimates are derived from the classical Sobolev inequalities.

We also want to mention that in the thesis of L. Bieri, see \cite{B-Z}, based on a more general asymptotic assumptions, she gave a simplified proof of the stability of Minkowski space-time. She managed to derive decay from the time vector field and the conformal scaling of the space-time which allowed her to circumvent rotational vector fields in that situation. But the current situation is different from \cite{B-Z}: we do not use Lie derivatives at all and the lower regularity forces us to explore more structures from the Einstein equations.

\subsection{Structure of the Proof}
The main observation arises from the second Bianchi identities. Roughly speaking, they explicitly show how one expresses angular or rotational derivatives of curvature components $\nabla \Psi$ in terms of some null derivatives of curvature components plus lower order non-linear terms. Namely, they can be written schematically as
\begin{equation*}
\nabla_L \Psi = \nabla \Psi + l.o.t.,
\end{equation*}
or
\begin{equation*}
\nabla_{\Lb} \Psi = \nabla \Psi + l.o.t.,
\end{equation*}
where we use lower order terms $l.o.t.$ to collect all nonlinear interactions and $L$, $\Lb$ are two standard null directions under the framework of double null foliations. Thus, up to lower order corrections, to obtain the estimates on $\nabla \Psi$ rotational derivatives on curvatures, it suffices to control $\nabla_L \Psi$ or $\nabla_{\Lb} \Psi$ null derivatives of curvatures. Thus, we identify our main targets to be $D_L R_{\alpha\beta\gamma\delta}$ and $D_{\Lb} R_{\alpha\beta\gamma\delta}$ and we shall derive energy estimates for them.

According the above idea, we use the modified short pulse ansatz proposed Klainerman and Rodnianski in \cite{K-R-09}. We remark that this ansatz allows more large components than the original short pulse data discovered by Christodoulou in \cite{Ch}. We shall derive the energy estimates based on Bel-Robinson tensors associated to $D_L R_{\alpha \beta \gamma \delta}$ and $D_{\Lb} R_{\alpha \beta \gamma \delta}$. To deal with error terms, namely terms $I$, $J$ and $K$ in the proof in following sections, we have to take account of the special structure of those terms. In reality, some generic terms in error term may cause a loss of $\delta^{-\frac{1}{2}}$ which prevent us from closing the bootstrap argument. To avoid this loss, there are typically three techniques to use:

\begin{enumerate}
\item[1)] We bound a product of two term in $L^2_{(sc)}$ by two $L^4_{(sc)}$ estimates on each term instead of one $L^{\infty}_{(sc)}$ estimate on one of them and one $L^2_{(sc)}$ estimate on the other. In most of the situation, this trick saves a $\delta^{\frac{1}{4}}$.

\item[2)] When we integrate a product of terms on some null hypersurface or on a domain in the space-time, we use integration by parts to move a bad derivative, typically $\nabla_3$ or $\nabla_4$, from one term (for whom this bad derivative may cause a loss of $\delta^{-\frac{1}{2}}$) to another (for whom this bad derivative is not bad at all, namely, there is no loss in $\delta$). Combined with Bianchi identities, this procedure may save a $\delta^{\frac{1}{2}}$.

\item[3)] For some generic term in error estimates mentioned above, though it appears that it causes a loss of $\delta$ (which can not be retrieved by using the trick 1) and 2)), we can use in fact either signature considerations or a precise computations to show that this term does not show up at all in the error estimates. This manifests the special cancelations in the error terms.\footnote{\quad This cancelation can also be observed much more directly by another way of deriving energy estimates, namely, multiplying Bianchi identities and integrating directly on a give domain. The author would like to thank Igor Rodnianski for communicating this idea.}
\end{enumerate}

%



The whole proof is to combine these three tricks. The paper is organized as follows: in next section, we recall basic definitions and estimates from \cite{K-R-09} and we state the main theorem; in following sections, we derive energy estimates for derivatives of curvature components in the following order: $\nabla_4 \alpha$, $\nabla_3 \alphab$ and then $\nabla_4 \Psi_1$ for $\Psi \neq \alpha$ and $\nabla_3 \Psi$ for $\Psi \neq \alphab$.

\section{Main Result}

\subsection{The Double Null Foliation Framework} We briefly recall the double null foliation formalism,  See \cite{Ch} for more precise definitions. We use $\D = \D(1,\delta)$ to denote the underlying space-time and use $g$ to denote the background metric. We assume that $\D$ is spanned by a double null foliation generated by two optical functions $u$ and $\ub$ and they increase towards the future, $0 \leq u \leq 1$ and $0 \leq \ub \leq \delta$. We use $H_u$ / $\Hb_{\ub}$ to denote the outgoing / incoming null hypersurfaces generated by the level surfaces of $u$ / $\ub$. We use $S_{u,\ub}$ to denote the space-like two surface $H_u \cap \Hb_{\ub}$. We denote by $H_u^{(\ub_1,\ub_2)}$ the region of $H_u$ defined by $\ub_1 \leq \ub \leq\ub_2$; similarly, we can define $\Hb_{\ub}^{(u_1,u_2)}$.

\begin{minipage}[!t]{0.3\textwidth}
  \includegraphics[width=2.4in]{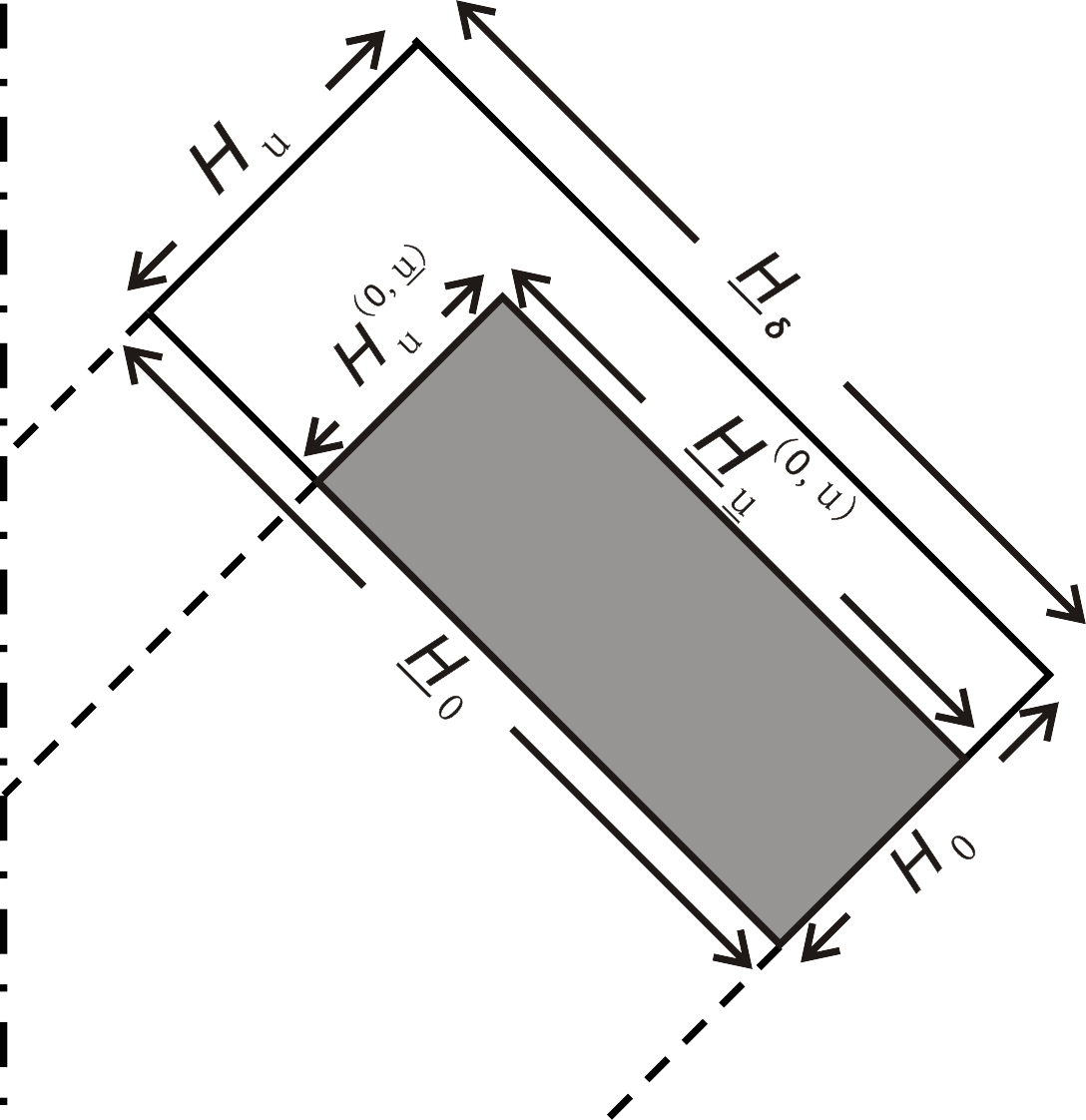}
\end{minipage}
\hspace{0.1\textwidth}
\begin{minipage}[!t]{0.5\textwidth}
The shaded region on the right represents the domain $\D(u,\delta)$ with $0 \leq u \leq 1$.  The function $u$ is in fact defined from $-1$ to $\delta$. When $u \leq 0$, this part of $H_0$ is assumed to be a flat light cone in Minkowski space with vertex located at $\ub = -1$. We use $r_0 \sim 2$ to measure the maximal radius of the flat part of $H_0$. In \cite{K-R-09}, the trapped surface forms at $\ub \sim 1$ and $u = \delta$.
\end{minipage}

Let $(L,\Lb)$ be the null geodesic generators of the double null foliation and we define the lapse function $\Omega$ by $ g(L,\Lb) = -\dfrac{2}{\Omega^2}$. The normalized null pair $(e_3, e_4)$ is defined by $e_3 = \Omega \Lb,~ e_4 = \Omega L, ~ g(e_3,e_4)= -2 $.
On sphere $S_{u,\ub}$ we choose an arbitrary orthonormal frame $(e_1,e_2)$. We call $(e_1, e_2, e_3, e_4)$ a \emph{null frame}.\footnote{\quad We use Greek letters $\alpha, \beta, \cdots$ to denote an index from $1$ to $4$ and Latin letters $a, b, \cdots$ to denote an index from $1$ to $2$. We also $N$ to denote the null direction either $L$ or $\Lb$. Repeated indices are always understood as taking sums}

We use $D$ to denote Levi-Civita connection defined by the metric $g$ and we define the \emph{connection coefficients} as follows,
\begin{align*}
 \chi_{ab}&= g(D_b e_4, e_b), ~ \eta_a = -\frac{1}{2}g(D_3 e_a, e_4), ~ \omega = -\frac{1}{4} g(D_4 e_3, e_4),\\
\chib_{ab}&=  g(D_b e_3, e_b), ~ \etab_a = -\frac{1}{2}g(D_4 e_a, e_3), ~ \omegab = -\frac{1}{4}g(D_3 e_4, e_3), ~\zeta_a  = \frac{1}{2} g(D_a e_4, e_3),
\end{align*}
where $D_\alpha = D_{e_\alpha}$. On $S_{u.\ub}$, $\nabla$ is the induced connection; $\nabla_3$ and $\nabla_4$ are the projections to $S_{u.\ub}$ of the covariant derivatives $D_3$ and $D_4$.

Given a Weyl field $W$, we introduce its null decomposition with respect to the given null frame,
\begin{align*}
 \alpha(W)_{ab} &= W(e_a, e_4, e_b, e_4), \quad  \beta(W)_a = \frac{1}{2}W(e_a,e_4,e_3,e_4), \quad \rho(W) = \frac{1}{4} W(e_4,e_3,e_4,e_3), \\
 \alphab(W)_{ab} &= W(e_a, e_3, e_b, e_3) \quad \betab(W)_a = \frac{1}{2}W(e_a,e_3,e_3,e_4), \quad \sigma(W) = \frac{1}{4}{} ^*W(e_4,e_3,e_4,e_3),
\end{align*}
where ${}^*W$ is the space-time Hodge dual of $W$. When $W$ is the Weyl curvature tensor, we use $\alpha, \alphab, \beta, \betab, \rho, \sigma$ to denote its null components.

We recall the null structure equations for the Einstein vacuum space-times (see \cite{K-R-09}). The originally Einstein field equations are
\begin{equation*}
R_{\alpha \beta} =0,
\end{equation*}
where $R_{\alpha\beta}$ is the Ricci curvature of the underlying space-times. We express this tensorial equations by using the null frame, by definition, this yields the null structure equations. We only list the transport type null structure equations which are relevant to the current work.
\begin{equation}\label{NSE_L_chi}
 \nabla_4 \tr \chi + \frac{1}{2}(\tr \chi)^2 = -|\chih|^2-2\omega \tr \chi, \quad \nabla_4 \chih + \tr \chi \,\chih = -2\omega \chih -\alpha,
\end{equation}
\begin{equation}\label{NSE_Lb_chib}
 \nabla_3 \tr \chib + \frac{1}{2}(\tr \chib)^2 = -|\chibh|^2-2\omegab \tr \chib, \quad \nabla_3 \chibh + \tr \chib \, \chibh = -2\omegab \chibh -\alphab,
\end{equation}
\begin{equation}\label{NSE_L_eta or Lb_etab}
 \nabla_4 \eta = -\chi \cdot (\eta -\etab)-\beta, \quad \nabla_3 \etab = -\chib \cdot (\etab -\eta)+\betab ,
\end{equation}
\begin{equation}\label{NSE_L_omegab}
 \nabla_4 \omegab = 2\omega \omegab +\frac{3}{4}|\eta-\etab|^2-\frac{1}{4}(\eta -\etab)\cdot(\eta +\etab)-\frac{1}{8}|\eta +\etab|^2 + \frac{1}{2}\rho,
\end{equation}
\begin{equation}\label{NSE_Lb_omega}
 \nabla_3 \omega = 2\omegab \omega +\frac{3}{4}|\eta-\etab|^2+\frac{1}{4}(\eta -\etab)\cdot(\eta +\etab)-\frac{1}{8}|\eta +\etab|^2 + \frac{1}{2}\rho,
\end{equation}
\begin{equation}\label{NSE_L_chib}
 \nabla_4 \tr \chib + \frac{1}{2}\tr \chi \, \tr \chib=2 \omega \tr \chib + 2 \divergence \etab  + 2|\etab|^2 +  2\rho - \chih \cdot \chibh,
\end{equation}
\begin{equation}\label{NSE_Lb_tr_chi}
 \nabla_3 \tr \chi + \frac{1}{2}\tr \chib \, \tr \chi=2 \omegab \tr \chi + 2 \divergence \eta + 2|\eta|^2 +  2\rho - \chih \cdot \chibh,\footnote{\quad $\divergence$ is the divergence on $S_{u, \ub}$ and $\Divergence$ is the space-time divergence.}
\end{equation}
\begin{equation}\label{NSE_L_chibh}
 \nabla_4 \chibh + \frac{1}{2} \tr \chi \chibh = \nabla \tensor \etab +2\omega \chibh -\frac{1}{2}\tr \chib \chih +\etab \tensor \etab,
\end{equation}
\begin{equation}\label{NSE_Lb_chih}
 \nabla_3 \chih + \frac{1}{2} \tr \chib \chih = \nabla \tensor \eta +2\omegab \chih -\frac{1}{2}\tr \chi \chibh +\eta \tensor \eta.
\end{equation}

We also express the Bianchi equations relative to the null frame to derive null Bianchi equations.\footnote{\quad We can eliminate $\zeta$ by $\zeta = \frac{1}{2}(\eta -\etab)$.}
\begin{equation}\label{NBE_Lb_alpha}
 \nabla_3 \alpha + \frac{1}{2}\tr \chib \alpha =\nabla \tensor \beta + 4\omegab \alpha - 3(\chih \rho + ^*\!\chih \sigma)+(\zeta + 4\eta)\tensor \beta,
\end{equation}
\begin{equation}\label{NBE_L_beta}
\nabla_4 \beta + 2 \tr \chi \beta = \divergence \alpha - 2\omega \beta +\eta \cdot \alpha ,
\end{equation}
\begin{equation}\label{NBE_Lb_beta}
\nabla_3 \beta +  \tr \chib \beta = \nabla \rho + ^*\! \nabla \sigma + 2\omegab \beta + 2\chih \cdot \betab + 3(\eta \rho + ^*\!\eta \sigma),
\end{equation}
\begin{equation}\label{NBE_L_sigma}
 \nabla_4 \sigma + \frac{3}{2} \tr \chi \sigma = -\divergence ^*\! \beta + \frac{1}{2}\chibh \cdot ^*\!\alpha - \zeta \cdot ^*\! \beta -2\etab \cdot ^*\!\beta,
\end{equation}
\begin{equation}\label{NBE_Lb_sigma}
 \nabla_3 \sigma + \frac{3}{2} \tr \chib \sigma = -\divergence ^*\! \betab + \frac{1}{2}\chih \cdot ^*\!\alphab - \zeta \cdot ^*\! \betab -2\eta \cdot ^*\!\betab,
\end{equation}
\begin{equation}\label{NBE_L_rho}
 \nabla_4 \rho + \frac{3}{2} \tr \chi \rho = \divergence \beta -\frac{1}{2}\chibh \cdot \alpha + \zeta \cdot \beta + 2\etab \cdot \beta,
\end{equation}
\begin{equation}\label{NBE_Lb_rho}
 \nabla_3 \rho + \frac{3}{2} \tr \chib \rho = -\divergence \betab-\frac{1}{2}\chih \cdot \alphab + \zeta \cdot \betab - 2\eta \cdot \betab,
\end{equation}
\begin{equation}\label{NBE_L_betab}
\nabla_4 \betab +  \tr \chi \betab = -\nabla \rho + ^*\! \nabla \sigma + 2\omega \betab + 2\chibh \cdot \beta - 3(\etab \rho - ^*\!\etab \sigma),
\end{equation}
\begin{equation}\label{NBE_Lb_betab}
\nabla_3 \betab + 2 \tr \chib \, \betab = -\divergence \alphab - 2\omegab \betab +\etab \cdot \alphab,
\end{equation}
\begin{equation}\label{NBE_L_alphab}
 \nabla_4 \alphab + \frac{1}{2}\tr \chi \alphab =-\nabla \tensor \betab + 4\omega \alphab - 3(\chibh \rho - ^*\!\chibh \sigma)+(\zeta - 4\etab)\tensor \betab,
\end{equation}

\subsection{Energy Estimates Scheme}\label{Energy Estimates Scheme}
We review our scheme for energy estimates on Weyl fields, see \cite{Ch-K} for the original resource.
Assume a Weyl field $W_{\alpha\beta\gamma\delta}$ and its Hodge dual $\Wstar_{\alpha\beta\gamma\delta}$ solve following divergence equations with source terms
\begin{equation}\label{divergence of W}
D^\alpha W_{\alpha\beta\gamma\delta} = J_{\beta\gamma\delta}, \quad D^\alpha \Wstar_{\alpha\beta\gamma\delta} = \Jstar_{\beta\gamma\delta},
\end{equation}
$J_{\alpha\beta\gamma}$ and $\Jstar_{\beta\gamma\delta}$ are called \emph{Weyl currents}.
\begin{remark}
For vacuum, the curvature tensor $R_{\alpha\beta\gamma\delta}$ is a Weyl field with zero currents
\begin{equation}\label{divergence of R}
D^\alpha R_{\alpha\beta\gamma\delta} = 0, \quad D^\alpha \Rstar_{\alpha\beta\gamma\delta} = 0.
\end{equation}
\end{remark}
The Bel-Robinson tensor $Q[W]_{\alpha\beta\gamma\delta}$ associated to $W_{\alpha\beta\gamma\delta}$ is defined as follows \footnote{\quad We shall use short hand notations $Q$ for $Q[W]$, $\alpha$ for $\alpha(W)$, $\beta$ for $\beta(W)$, $\rho$ for $\rho(W)$, ..., if there is no confusion in the context.}
\begin{equation*}
Q[W]_{\alpha\beta\gamma\delta}=W_{\alpha\mu\gamma\nu}W_{\beta}{}^{\mu}{}_{\delta}{}^{\nu}+\Wstar_{\alpha\mu\gamma\nu}{}\Wstar_{\beta}{}^{\mu}{}_{\delta}{}^{\nu}.
\end{equation*}
It is fully symmetric and traceless in all pair of indices. Moreover, it satisfies the dominant energy condition which allows one to recover estimates for Weyl field $W$. In pratical terms, this condition can be expressed by formulas,
\begin{align*}
Q_{4444} = 2 |\alpha|^2, \quad Q_{3333}= 2 |\alphab|^2, \quad Q_{4443}= 4 |\beta|^2, \quad  Q_{3334}= 4 |\betab|^2, \quad Q_{4433}= 4(\rho^2 +\sigma^2).
\end{align*}
We also list other null components of $Q$ for future use,
\begin{align}\label{components of Q}
Q_{a444} &= 4\alpha_{ab}\beta_b, ~ Q_{a333} = -4\alphab_{ab}\betab_b,\notag \\
Q_{a344} &= 4\rho\beta_a - 4\sigma \, ^*\!\beta_a, ~ Q_{a433} = -4\rho\betab_a - 4\sigma \, ^*\!\betab_a, \\
Q_{ab44} &= 2|\beta|^2 +2\rho \alpha -2\sigma\,^*\!\alpha,  ~ Q_{ab33} = 2|\betab|^2 +2\rho \alphab+2\sigma\,^*\!\alphab, \notag\\
Q_{ab34} &= -2\beta\tensor \betab + 2(\rho^2+\sigma^2)\delta_{ab}.\notag
\end{align}
In view of \eqref{divergence of W}, $Q$ enjoys the following divergence equations
\begin{equation}\label{divergence of Q}
D^\alpha  Q_{\alpha\beta\gamma\delta}= W_{\beta}{}^{\mu}{}_{\delta}{}^{\nu}J_{\mu\gamma\nu} + W_{\beta}{}^{\mu}{}_{\gamma}{}^{\nu}J_{\mu\delta\nu} + \Wstar_{\beta}{}^{\mu}{}_{\delta}{}^{\nu}{}\Jstar_{\mu\gamma\nu} + \Wstar_{\beta}{}^{\mu}{}_{\gamma}{}^{\nu}{}\Jstar_{\mu\delta\nu}.
\end{equation}
Given vector fields $X$, $Y$ and $Z$, we define the current $P[W](X,Y,Z)_{\alpha}$ associated to  $X$, $Y$, $Z$ and $W$ by
\begin{equation*}
 P_\alpha = P[W](X,Y,Z)_{\alpha} = Q_{\alpha\beta\gamma\delta}X^\beta Y^\gamma Z^\delta.
\end{equation*}
Thus,
\begin{equation}\label{divergence of P}
 D^\alpha P_{\alpha} = D^\alpha Q_{\alpha XYZ} + (\pi \cdot Q)(X,Y,Z),
\end{equation}
where $^{(X)}\pi$ is the deformation tensor of $X$ defined by $^{(X)}\pi_{\alpha\beta} = \frac{1}{2}(D_\alpha X_\beta + D_\beta X_\alpha)$ and
\begin{equation*}
(\pi \cdot Q)(X,Y,Z)=Q_{\alpha\beta\gamma\delta}{}^{(X)}\!\pi^{\alpha\beta}Y^{\gamma}Z^{\delta}+Q_{\alpha\beta\gamma\delta}{}^{(Y)}\!\pi^{\alpha\beta}Z^{\gamma}X^{\delta}+Q_{\alpha\beta\gamma\delta}{}^{(Z)}\!\pi^{\alpha\beta}X^{\gamma}Y^{\delta}.
\end{equation*}
We integrate \eqref{divergence of P} on $\D(u,\ub)$ to derive the fundamental energy identity \footnote{\quad $L$ and $\Lb$ are corresponding normals of the null hypersurfaces $H_u$ and $\Hb_{\ub}$.}

\begin{minipage}[!t]{0.2\textwidth}
  \includegraphics[width = 2 in]{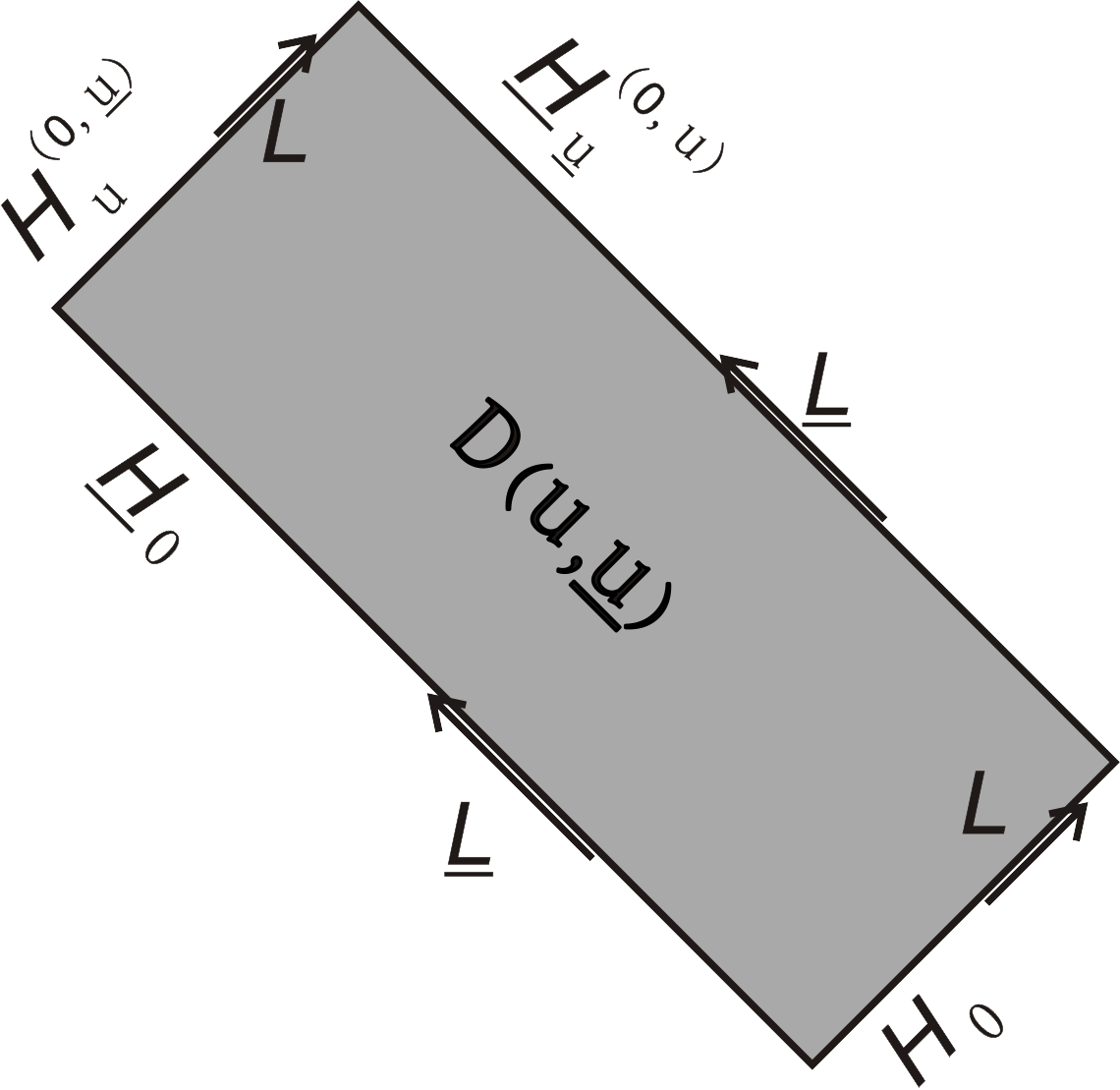}
\end{minipage}
\hspace{0.1\textwidth}
\begin{minipage}[!t]{0.6\textwidth}
\begin{align}\label{basic energy identity}
 &\quad\int_{H_u} Q(X,Y,Z,L)+\int_{\Hb_{\ub}} Q(X,Y,Z,\Lb) \notag\\
 &= \int_{H_0} Q(X,Y,Z,L)+\int_{\Hb_{0}} Q(X,Y,Z,\Lb)\\
&+\doubleint_{\D(u,\ub)} \Divergence Q (X,Y,Z) + \doubleint_{\D(u,\ub)} (\pi \cdot Q)(X,Y,Z).\notag
\end{align}
\end{minipage}

We list non-zero components of deformation tensors of $L$ and $\Lb$ as well as the non-zero component of $D^{\mu} \Lb^{\nu}$ and $D^{\mu} \Lb^{\nu}$ for future use\footnote{\quad We do not use $e_3$ and $e_4$ in order to avoid $\nabla_4 \omega$ and $\nabla_3 \omegab$ which do not have certain $L^4$ estimates, see \cite{K-R-09} for details.}:
\begin{align*}
 ^{(L)}\!\pi_{33} &= -8\Omega^{-1} \omegab, \quad ^{(L)}\!\pi_{3a} = 2\Omega^{-1}\eta^a, \quad  ^{(L)}\!\pi_{ab} = \Omega^{-1}\chi^{ab},\\
  ^{(\Lb)}\!\pi_{33} &= -8\Omega^{-1} \omega, \quad ^{(\Lb)}\!\pi_{4a} = 2\Omega^{-1}\etab^a, \quad  ^{(\Lb)}\!\pi_{ab} = \Omega^{-1}\chib^{ab},\\
 D^{4} L^{4} &= 2\omegab, \quad D^{4} L^{a} = D^{a} L^{4} = -\Omega^{-1}\eta^a, \quad D^{a} L^{b} = \Omega^{-1}\chi^{ab},\\
  D^{3} \Lb^{3} &= 2\omega, \quad D^{3} \Lb^{a} = D^{a} \Lb^{3} = -\Omega^{-1}\etab^a, \quad D^{a} \Lb^{b} = \Omega^{-1}\chib^{ab}.
\end{align*}
	
We consider one derivative of curvature $R_{\alpha\beta\gamma\delta}$ in a null direction $N$. One easy but important observations is that $D_N R_{\alpha\beta\gamma\delta}$ is still a Weyl field. We can commute $D_N$ with \eqref{divergence of R} to derive\footnote{\quad Recall that $D_N$ commutes with Hodge $*$ operator.}
\begin{equation}\label{divergence of D_N R}
D^\alpha D_N R_{\alpha\beta\gamma\delta} = J^{(N)}_{\beta\gamma\delta}, \quad D^\alpha {}\DNRstar_{\alpha\beta\gamma\delta} = \Jstar^{(N)}_{\beta\gamma\delta},
\end{equation}
where
\begin{equation}\label{JN}
J^{(N)}_{\beta\gamma\delta} = R^{\mu}{}_N{}_{\beta}{}^{\nu} R_{\mu}{}_{\nu}{}_{\gamma}{}_{\delta} + R^{\mu}{}_N{}_{\gamma}{}^{\nu} R_{\mu}{}_{\beta}{}_{\nu}{}_{\delta} + R^{\mu}{}_N{}_{\delta}{}^{\nu} R_{\mu}{}_{\beta}{}_{\gamma}{}_{\nu} + D^{\mu}N^{\nu} D_\nu R_{\mu\beta\gamma\delta},
\end{equation}
and
\begin{equation}\label{JNstar}
\JNstar_{\beta\gamma\delta} = R^{\mu}{}_N{}_{\beta}{}^{\nu} \Rstar_{\mu}{}_{\nu}{}_{\gamma}{}_{\delta} + R^{\mu}{}_N{}_{\gamma}{}^{\nu} \Rstar_{\mu}{}_{\beta}{}_{\nu}{}_{\delta} + R^{\mu}{}_N{}_{\delta}{}^{\nu} \Rstar_{\mu}{}_{\beta}{}_{\gamma}{}_{\nu}+ D^{\mu}N^{\nu} D_\nu \Rstar_{\mu\beta\gamma\delta}.
\end{equation}

\subsection{Short Pulse Ansatz and Scale Invariant Formulation}
We briefly recall the notions of \emph{signature} and \emph{scale} introduced by Klainerman and Rodnianski in \cite{K-R-09}. Let $\phi$ be either a null component of curvature or a connection coefficient, we use $N_a(\phi)$, $N_3(\phi)$ and $N_4(\phi)$ to denote the number of times $(e_a)_{i=1,2}$, respectively $e_3$ and $e_4$ appearing in the definition of $\phi$. The \emph{signature} of $\phi$, $sgn(\phi)$, and the \emph{scale} of $\phi$, $sc(\phi)$, are defined as
\begin{equation*}
sgn(\phi) =1\times N_4(\phi) + \frac{1}{2}\times N_a(\phi) + 0\times N_3(\phi)-1, ~sc(\phi) = -sgn(\phi)+ \frac{1}{2}.
\end{equation*}
We list the signatures and scales for all connection coefficients and curvature components,
\begin{center}
  \begin{tabular}{ | c | c | c || c | c | c || c | c | c |}
    \hline
         & signature & scale & &signature & scale & &signature & scale\\ \hline
    $\chi, \omega$ & 1 &$-\frac{1}{2}$ & $\alpha$ & 2 &$-\frac{3}{2}$ & $\alphab$ & 0 & $-\frac{1}{2}$ \\
    $\eta, \etab, \zeta$ & $\frac{1}{2}$ & 0 & $\beta$ & $\frac{3}{2}$ & $-1$ & $\betab$ &  $-\frac{1}{2}$ & 0\\
    $\chib,\omegab$ & 0 & $\frac{1}{2}$ &  $\rho, \sigma$ & 1 &$-\frac{1}{2}$ & & &\\
    \hline
  \end{tabular}
\end{center}
We impose following rules on signatures,
\begin{align*}
sgn(\nabla_4\phi) = sgn(\phi)+1,~ sgn(\nabla \phi) &= sgn(\phi)+\frac{1}{2},~ sgn(\nabla_3 \phi)= sgn(\phi)+0, \\
sgn(\phi_1 \cdot \phi_2) &= sgn(\phi_1)+sgn(\phi_2).
\end{align*}
We define the \emph{scale invariant norms} for $\phi$. Along null hypersurfaces $H_u^{(0,\ub)}$ or $\Hb_{\ub}^{(0,u)}$,
\begin{equation*}
 \|\phi\|_{L^2_{(sc)}(H_u^{(0,\ub)})} =  \delta^{-sc(\phi)-1}\|\phi\|_{L^2(H_u^{(0,\ub)})}, \quad \|\phi\|_{L^2_{(sc)}(\Hb_{\ub}^{(0,u)})} = \delta^{-sc(\phi)-\frac{1}{2}} \|\phi\|_{L^2(\Hb_{\ub}^{(0,u)})}.
\end{equation*}
On a two dimensional surface $S_{u,\ub}$,
\begin{equation*}
\|\phi\|_{L^p_{(sc)}(u,\ub)} = \|\phi\|_{L^p_{(sc)}(S_{u,\ub})} = \delta^{-sc(\phi)-\frac{1}{p}}  \|\phi\|_{L^p(S_{u,\ub})}.
\end{equation*}
Those norms are obviously related by formulas,
\begin{equation*}
 \|\phi\|^2_{L^2_{(sc)}(H_u^{(0,\ub)})} = \delta^{-1} \int_0^{\ub} \|\phi\|^2_{L^p_{(sc)}(u,\ub')} d\ub', \quad \|\phi\|^2_{L^2_{(sc)}(\Hb_{\ub}^{(0,u)})} = \int_0^{u} \|\phi\|^2_{L^p_{(sc)}(u',\ub)} d u'.
\end{equation*}
Those scale invariant norms come up naturally with a small parameter $\delta$. Roughly speaking, it captures the \emph{smallness} of the non-linear interaction. We have H\"older's inequality in scale invariant form,
\begin{equation}\label{Holder}
\|\phi_1 \cdot \phi_2 \|_{L^{p}_{(sc)}(S_{u,\ub})} \leq \delta^{\frac{1}{2}}\|\phi_1\|_{L^{p_1}_{(sc)}(S_{u,\ub})} \|\phi_2\|_{L^{p_2}_{(sc)}(S_{u,\ub})} ~\text{with}~ \frac{1}{p} =  \frac{1}{p_1} +  \frac{1}{p_2}.
\end{equation}
Similar estimates hold along null hypersurfaces.
\begin{remark} \label{gain or not gain} The rule of thumb for treating the nonlinear terms is, whenever one has a product of two terms,                                            \eqref{Holder} gains a $\delta^{\frac{1}{2}}$. We do have cases that \eqref{Holder} does not gain any power in $\delta$. In fact, if $f$ is a bounded (in usual sense) scalar function (say, bounded by a universal constant), the best we can hope is $
\|f \cdot \phi \|_{L^{p}_{(sc)}(S_{u,\ub})} \lesssim \|\phi\|_{L^{p}_{(sc)}(S_{u,\ub})}$.
In particular, in this paper, for $f$ can be $\tr\chib = \trchibt + \tr\chib_0$ where $\tr\chib_0 = \frac{4}{2r_0 + \ub - u} \sim 1$, we have to pay special attentions to the appearance of $\tr\chib$, see \cite{K-R-09} for more detailed descriptions.
\end{remark}
We introduce a family of scale invariant norms for connection coefficients where $p=2, 4$ or $\infty$, \footnote{\quad We use shorthand notations $\|(\psi,\psi',\psi'',\cdots)\| = \|\psi\|+\|\psi'\|+\|\psi''\|+ \cdots$}
\begin{align*}
\OSzerop(u,\ub) &=\delta^{\frac{1}{p}}\|(\chih, \chibh)\|_{L^p_{(sc)}(u,\ub)} + \|(\tr\chi, \omega, \eta, \etab, \trchibt, \omegab)\|_{L^p_{(sc)}(u,\ub)}, \\
\OSonep (u,\ub) &=\|\nabla(\chih, \tr\chi, \omega, \eta, \etab, \chibh, \trchibt, \omegab)\|_{L^p_{(sc)}(u,\ub)}, p \neq \infty,\\
\OHtwo(u,\ub)&=\|\nabla^2(\chih, \tr\chi, \omega, \eta, \etab, \chibh, \trchibt, \omegab)\|_{L^2_{(sc)}(H_u^{(0,\ub)})},\\
\OHbtwo(u,\ub)&=\|\nabla^2(\chih, \tr\chi, \omega, \eta, \etab, \chibh, \trchibt, \omegab)\|_{L^2_{(sc)}(\Hb_{\ub}^{(0,u)})},
\end{align*}
as well as for curvature components,
\begin{align*}
 \Rzero(u,\ub) & = \delta^{\frac{1}{2}}\|\alpha\|_{L^2_{(sc)}(H_u^{(0,\ub)})} + \|(\beta, \rho, \sigma, \betab)\|_{L^2_{(sc)}(H_u^{(0,\ub)})},\\
\Rzerob(u,\ub) & = \delta^{\frac{1}{2}}\|\beta\|_{L^2_{(sc)}(\Hb_{\ub}^{(0,u)})} + \|(\rho, \sigma, \betab, \alphab)\|_{L^2_{(sc)}(\Hb_{\ub}^{(0,u)})},\\
 \Rone(u,\ub) & = \delta^{\frac{1}{2}}\|\nabla_4 \alpha\|_{L^2_{(sc)}(H_u^{(0,\ub)})} + \|\nabla(\alpha, \beta, \rho, \sigma, \betab)\|_{L^2_{(sc)}(H_u^{(0,\ub)})},\\
\Roneb(u,\ub) & = \delta^{\frac{1}{2}}\|\nabla_3 \alphab\|_{L^2_{(sc)}(\Hb_{\ub}^{(0,u)})} + \|\nabla(\beta,\rho, \sigma,\betab, \alphab)\|_{L^2_{(sc)}(\Hb_{\ub}^{(0,u)})}.
\end{align*}

Finally, we introduce total norms. We define $\OSzerofour = \sup_{u,\ub} \OSzerofour(u,\ub)$ and $\Rzero = \sup_{u,\ub} \Rzero(u,\ub)$; similarly, we can define supremum norms for other scale invariant norms. The total norms are defined as follows,
\begin{align*}
 \mathcal{O} &= \Ozeroinfinity + \Ozerotwo +\Ozerofour + \Oonetwo + \Oonefour + \OHtwo + \OHbtwo,\\
 \R &= \Rzero+\Rone, \quad \Rb = \Rzerob + \Roneb.
\end{align*}

We use $\Oinitial$ and $\Rinitial$to denote total norms on the initial hypersurface $H_0$.

In the above definitions, all norms are scale invariant except for $\|\chih\|_{L^2_{(sc)}(u,\ub)}$, $\| \chibh \|_{L^2_{(sc)}(u,\ub)}$, $\|\alpha\|_{L^2_{(sc)}(H_u)}$,  $\|\nabla_3 \alphab\|_{L^2_{(sc)}(\Hb_{\ub})}$ and $\|\beta\|_{L^2_{(sc)}(\Hb_{\ub})}$. Those terms are understood to cause a loss of $\delta^{-\frac{1}{2}}$. Notice also $\beta$ on incoming hypersurfaces $\Hb_{\ub}$ is scale invariant. By abuse of language, we call those terms \emph{anomalies} or \emph{anomalous} if they cause a loss of $\delta^{-\frac{1}{p}}$ in $L^p_{(sc)}$ norm. Notice also all the connection coefficients are not anomalous in $L^\infty_{(sc)}$ norms. Inspired by this, we  use $\psi_g$ (`g` for good) to denote some (good) connection coefficient in $\{\tr\chi, \omega, \eta, \etab, \trchibt, \omegab\}$ and $\psi$ to denote an arbitrary connection coefficient. We use $\Psi_g$ to denote some curvature component in $\{\beta,\rho, \sigma,\betab, \alphab\}$ and $\Psi$ to denote an arbitrary connection coefficient. We shall also put a 'g' as a lower index to other quantities in order to indicate that this quantity is not anomalous. For example, we can write $\Psi(D_a R)_g = \alpha(D_a R)$ because $\|\alpha(D_a R)\|_{L^2_{(sc)}(H_u^{(0,\ub)})} \lesssim 1$.

\subsection{Main Result}
For Einstein equations with characteristic data prescribed on $\Hb_0$ (where the data is trivial) and $H_0$, we can freely specify the conformal geometry on $H_0$. In other words, we can specify $\chih$ freely along $H_0$ to fix an initial data for the evolutionary problem. We remark that, in contrast to the case where the initial data is give on a space-like hypersurface, there is no constraints and the data can be freely specified.

We require the initial data $\chi$ subject to the following norm is finite,
\begin{align*}
\Izero &= \delta^{\frac{1}{2}}\|\chih\|_{L^{\infty}(H_0)} + \sup_{0\leq \ub \leq \delta} [\sum_{k=0}^{2}\delta^{\frac{1}{2}}\|(\delta \nabla_4)^k \chih \|_{L^2(S_{0,\ub})}\\
 &\quad + \sum_{k=0}^{1}\sum_{m=0}^3 \delta^{\frac{1}{2}}\|(\delta^{\frac{1}{2}}\nabla)^m(\delta \nabla_4)^k \chih\|_{L^2(S_{0,\ub})}],
\end{align*}
i.e. our main assumption on the initial data is the following short pulse ansatz,
\begin{equation}\label{initial_ansatz_1}
\Izero < \infty.
\end{equation}
Notice that the size of $\chih$ can be as large as $\delta^{-\frac{1}{2}}$ so the problem is far away from small data regime.

The ansatz \eqref{initial_ansatz_1} was introduced by Klainerman and Rodnianski in \cite{K-R-09}. This initial data set is larger than those of Christodoulou's original short pulses. In fact, \eqref{initial_ansatz_1} allows more components than those of Christodoulou's to be as large as $\delta^{-\frac{1}{2}}$. In this ansatz, we allow $\nabla$ behaves as $\delta^{-\frac{1}{2}}$; in Christodoulou's ansatz, $\nabla$ behaves as $1$.

Under the ansatz \eqref{initial_ansatz_1}, with the help of null structure equations, we can easily derive the following estimates on initial outgoing surfaces $H_0$,
\begin{lemma}Under the ansatz \eqref{initial_ansatz_1}, along the initial outgoing hypersurface $H_0$, if $\delta$ is sufficiently small, there is a constant $c(\Izero)$ depending only on $\Izero$, such that
\begin{equation}\label{initial_ansatz_1_prime}
\Oinitial +\Rinitial   \lesssim c(\Izero).
\end{equation}
\end{lemma}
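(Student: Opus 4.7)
The plan is to integrate the null structure and Bianchi equations along $H_0$, starting from the initial sphere $S_{0,0}=H_0\cap\Hb_0$ where the geometry is Minkowskian. A convenient gauge on $H_0$ is $\Omega\equiv 1$, which forces $\omega\equiv 0$ and ensures $S_{0,0}$ inherits the flat data from the extended cone $\{u\le 0\}$: in particular $\tr\chi|_{S_{0,0}}=2/r_0$, while every other connection coefficient and every curvature component vanishes on $S_{0,0}$. The only non-trivial input along $H_0$ is $\chih$, whose scale-invariant bounds (including up to two angular derivatives and two $\delta\nabla_4$-derivatives) are directly encoded by the ansatz $\Izero<\infty$. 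All other quantities will be obtained by propagating these initial values in the $e_4$-direction through the $\nabla_4$-transport equations \eqref{NSE_L_chi}--\eqref{NSE_Lb_chih} and through the null Codazzi/Gauss relations on each $S_{0,\ub}$.

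Concretely, I would first bound $\tr\chi$ by integrating the Raychaudhuri equation in \eqref{NSE_L_chi}: the nonlinear forcing $|\chih|^2$ is anomalous of sup-size $\delta^{-1}$, but its $\ub$-integral over $[0,\delta]$ is of order $\Izero^2$, so Gronwall yields $\tr\chi - 2/r_0 = O(\Izero^2)$. The second equation of \eqref{NSE_L_chi} then reads $\alpha=-\nabla_4\chih-\tr\chi\,\chih$, which recovers $\alpha$ (and $\nabla_4\alpha$, and two angular derivatives) directly from $\Izero$. Codazzi and Gauss on $S_{0,\ub}$ deliver $\beta,\rho,\sigma$ algebraically from $(\chih,\chibh,K)$; feeding these into \eqref{NSE_L_eta or Lb_etab}, \eqref{NSE_L_omegab}, \eqref{NSE_L_chib}, \eqref{NSE_L_chibh} produces $\eta,\omegab,\trchibt,\chibh$ by the same Gronwall scheme. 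The remaining quantities $\etab,\betab,\alphab$ are then obtained either from analogous $\nabla_4$-transport equations derived from the Ricci identities or from the Bianchi equations \eqref{NBE_L_betab}, \eqref{NBE_L_alphab}. Higher angular regularity (the $\OHtwo$, $\OHbtwo$ and $\Rone$ pieces) is obtained by commuting one additional $\nabla$ or $\nabla_4$ through each transport equation, using the commutator identity $[\nabla_4,\nabla]\phi=-\chi\cdot\nabla\phi+\cdots$ and absorbing the resulting nonlinearities through \eqref{Holder} and the Sobolev embedding on $S_{0,\ub}$.

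The main obstacle is the bookkeeping of $\delta$-powers rather than any deep new inequality. Every appearance of $\chih$ (or of its $\nabla_4$-derivatives controlled by the $\Izero$ hierarchy) is anomalous of size $\delta^{-1/2}$, and at each step one must verify that the $\delta^{1/2}$ gained through the scale-invariant H\"older inequality \eqref{Holder}, together with the $\delta$-length of the $\ub$-interval over which the transport equation is integrated, precisely compensates these losses. This is the entire point of the scale-invariant formulation, and the calculation proceeds along the same lines as the corresponding one-derivative estimates in \cite{K-R-09}; the final constant $c(\Izero)$ comes out polynomial in $\Izero$.
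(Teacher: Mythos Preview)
Your outline is correct and is precisely the standard argument: the paper itself omits the proof entirely, referring the reader to \cite{K-R-09} and Chapter~2 of \cite{Ch}, and what you have sketched is exactly the propagation-along-$H_0$ scheme carried out in those references. No discrepancy to report.
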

Thanks to this proposition, we shall replace \eqref{initial_ansatz_1} by \eqref{initial_ansatz_1_prime}. And we omit the proof and refer the reader to \cite{K-R-09} or Chapter 2 of \cite{Ch}. The next proposition claims that we can control connection coefficients provided that we have bound on curvatures. This is \textit{Theorem A} in \cite{K-R-09} and we omit the proof.
\begin{proposition}\label{theoremA}
Assume that $\Ozero$, $\R$ and $\Rb$ are finite in $\D(1,\delta)$. Then there is a constant $C$ depending only on $\Ozero, \R$ and $\Rb$ such that \footnote{\quad Throughout the paper, we use $C$ to denote a constant depending only on $\Ozero, \R$ and $\Rb$.}
\begin{equation}\label{O norm}
\O \lesssim C.
\end{equation}
\end{proposition}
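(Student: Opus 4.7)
\textbf{Proof Proposal for Proposition \ref{theoremA}.} The plan is a continuity/bootstrap argument based on integrating the transport-type null structure equations \eqref{NSE_L_chi}--\eqref{NSE_Lb_chih} and their once-differentiated versions, with the curvature norms $\R,\Rb$ and the initial data $\Oinitial$ providing the forcing. Assume first that the estimate $\O \leq \Lambda$ holds on some subdomain of $\D(1,\delta)$ for a large constant $\Lambda$ depending only on $\Oinitial,\R,\Rb$; we then want to close to $\O \leq \Lambda/2 + C(\Oinitial,\R,\Rb)$ for $\delta$ small. Each structure equation takes one of the two shapes $\nabla_4 \psi + \lambda\tr\chi\,\psi = F$ or $\nabla_3 \psi + \lambda\tr\chib\,\psi = F$, and can be integrated with an integrating factor along the generators of $H_u$ or $\Hb_{\ub}$. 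In the scale-invariant formalism this yields, schematically,
\begin{equation*}
\|\psi\|_{L^p_{(sc)}(u,\ub)} \lesssim \|\psi\|_{L^p_{(sc)}(u,0)} + \int_0^{\ub}\|F\|_{L^p_{(sc)}(u,\ub')}\,d\ub',
\end{equation*}
with the key gain being that integration in $\ub'$ converts to a factor of $\delta$ and Hölder \eqref{Holder} gains $\delta^{1/2}$ per nonlinear product.

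The order of work is: first, close $\Ozerotwo$ and $\Ozerofour$ on each $S_{u,\ub}$ by running the $\nabla_4$-equations in \eqref{NSE_L_chi}--\eqref{NSE_L_omegab} outward from $H_0$ along each $H_u$, then running the $\nabla_3$-equations \eqref{NSE_Lb_chib},\eqref{NSE_Lb_omega} inward along $\Hb_{\ub}$, using as source terms the already-controlled curvature null components plus products $\psi\cdot\psi$ which gain $\delta^{1/2}$. Second, commute one tangential derivative $\nabla$ through these transport equations to get $\Oonetwo$ and $\Oonefour$; the commutator $[\nabla,\nabla_4]$ produces only terms of the form $\chi\cdot\nabla\psi$, $\eta\cdot\nabla_4\psi$ and $\nabla\psi_g\cdot\psi$ which again come with a $\delta^{1/2}$ gain once the bootstrap assumption is used. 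Third, obtain the $L^\infty$ bounds $\Ozeroinfinity$ and the $L^4$ bounds $\Ozerofour$ from the $L^2$ bounds on $\nabla\psi$ via the scale-invariant Sobolev embedding $L^4(S_{u,\ub})\hookleftarrow H^1$ and $L^\infty(S_{u,\ub})\hookleftarrow H^2$ (combined with elliptic Hodge systems, e.g.\ using \eqref{NSE_L_chibh}--\eqref{NSE_Lb_chih} plus Codazzi to get $\nabla\chih$, $\nabla\chibh$ from lower-order data and curvature). Fourth, commute a second derivative $\nabla^2$ through the transport equations to close $\OHtwo$ and $\OHbtwo$; here the integration is on the whole null hypersurface in $L^2_{(sc)}$, and one must feed in the full $\R,\Rb$ including $\nabla\alpha$ and $\nabla\alphab$.

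Two structural features keep the bootstrap honest. First, the anomalous components $\chih$ and $\chibh$ (causing a loss $\delta^{-1/2}$ in $L^\infty$) appear multiplicatively in the sources, but they always couple either to something that is better than anomalous or to a $\nabla_4$-equation for a non-anomalous quantity where Grönwall along the short $\ub$-direction of length $\delta$ absorbs the loss. Second, as emphasized in Remark \ref{gain or not gain}, $\tr\chib = \trchibt + \tr\chib_0$ does \emph{not} give a $\delta^{1/2}$ in Hölder because of the background piece $\tr\chib_0 \sim 1$; this forces us to use $\tr\chib_0$ as an \emph{integrating factor} rather than as a small perturbation — i.e.\ we renormalize $\psi \mapsto r^{\lambda}\psi$ with $r = r_0 + (\ub-u)/2$, which turns the linear term $\lambda \tr\chib\,\psi$ into a term involving only $\trchibt\,\psi$ that does gain $\delta^{1/2}$.

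The main obstacle, in my view, is step three: the second-derivative estimate $\OHtwo+\OHbtwo$. Here one cannot close by pure transport because two derivatives of $\chih$ would require $\nabla\alpha$ on $H_u$ and two derivatives of $\chibh$ would require $\nabla\alphab$ on $\Hb_{\ub}$, and both of these appear anomalously in $\R,\Rb$. The remedy is to split $\nabla^2\psi$ into $\nabla^2_{\text{Hodge}}\psi$ (controlled by an elliptic identity of the form $\Dtwo\nabla = \nabla \Done + \text{curvature}$ on $S_{u,\ub}$, together with the Gauss equation tying $K$ to $\rho,\chih,\chibh$) plus transport-controlled pieces, and to pair the outgoing and incoming estimates so that the bad $\nabla\alpha$ appearing in $\OHtwo$ is integrated against a $\delta^{1/2}$-small coefficient coming from $\chibh$ before being hit by the $\ub$-integration. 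Once this pairing is in place, Grönwall in $\ub$ for quantities on $H_u$ and in $u$ for quantities on $\Hb_{\ub}$ closes the bootstrap and produces the constant $C = C(\Ozero,\R,\Rb)$ asserted in \eqref{O norm}.
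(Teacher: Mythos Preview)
The paper does not give its own proof of this proposition: immediately after stating it, the author writes ``This is \textit{Theorem A} in \cite{K-R-09} and we omit the proof.'' So there is no argument in the paper to compare your proposal against.

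That said, your outline is in the right spirit and broadly matches the strategy of \cite{K-R-09}: a bootstrap on $\O$, closed by integrating the transport-type null structure equations along generators of $H_u$ and $\Hb_{\ub}$, commuting with $\nabla$, using scale-invariant H\"older and Sobolev to handle the nonlinear products, and treating the $\tr\chib_0$ piece as an integrating factor rather than a perturbation. Your identification of the second-derivative norms $\OHtwo,\OHbtwo$ as the delicate step, to be handled via Hodge-elliptic systems on $S_{u,\ub}$ coupled to transport, is also consistent with how \cite{K-R-09} proceeds. Since the present paper takes this result as a black box, your sketch is an appropriate stand-in; a full verification would require reproducing the detailed hierarchy of estimates from \cite{K-R-09}, which is beyond the scope of this note.
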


We now state our main theorem. This is the following propagation estimates, which asserts that if \eqref{initial_ansatz_1_prime} holds on initial hypersurface $H_0$, thus on the whole $\D(1,\delta)$ we can bound the curvature norms $\R$ and $\Rb$ by a function depending only on initial data.
\begin{theoremC}\label{theoremC}
 Assume the short pulse ansatz \eqref{initial_ansatz_1} hence \eqref{initial_ansatz_1_prime}, thus if $\delta$ is sufficiently small, we have
 \begin{equation}\label{main estimates}
 \R + \Rb  \lesssim c(\Izero),
 \end{equation}
where $c(\Izero)$ is a constant depending only on the size of the initial data.
\end{theoremC}

The main consequence of our estimates is the following global existence theorem,
\begin{theoremD}\label{theoremD}
 Given initial data $\chih$ satisfying \eqref{initial_ansatz_1}, if $\delta$ is sufficiently small, we can construct an unique solution of the Einstein vacuum equations
 \begin{equation*}
 R_{\alpha\beta} = 0,
 \end{equation*}
 on the whole region $\D(1,\delta)$.
\end{theoremD}

Once we have the Main Estimates, the proof of the theorem is a typical continuity argument. We refer the reader to Chapter 16 of \cite{Ch} for the detailed proof. We shall not repeat this argument.

We want to remark that the formation of trapped surfaces can not be derived from the initial data ansatz \eqref{initial_ansatz_1}. We have to add two more modifications to make sure that $H_0$ is free of trapped surface and $S(1,\delta)$ is a trapped surface. More precisely, in addition to \eqref{initial_ansatz_1_prime}, we also assume that
\begin{equation}\label{initial_ansatz_2}
\sup_{0\leq \ub \leq \delta}\sum_{k=2}^4 \delta^{\frac{1}{2}}\|(\delta^{\frac{1}{2}})^k \nabla^k \chih\|_{L^2(S_{0,\ub})} \leq \varepsilon,
\end{equation}
for sufficiently small $\varepsilon$ such that $0< \delta \ll \varepsilon$ and we also assume $\chih$ satisfies,
\begin{equation}\label{initial_ansatz_3_formation_ansatz}
 (1+C_0 \delta^{\frac{1}{2}})\frac{2(r_0-u)}{r_0^2}<\int_0^\delta |\chih(0,\ub)|^2 d\ub < \frac{2(r_0-\delta)}{r_0^2},
\end{equation}
where $C_0$ is a universal constant and $r_0 \sim 2$ measure the maximal radius of the flat part of $H_0$. These condition guarantees the dynamical formation of trapped surfaces, we refer the readers to \cite{K-R-09} for details.

\subsection{The Bootstrap Argument}\label{structure of the proof}

We use a bootstrap argument to prove the \textbf{Main Estimates}. To be more precisely, we assume that $\R$ and $\Rb$ are finite. This assumption holds initially near $H_0$. Our main task is to carry out the following estimates under this assumption,
\begin{equation}\label{final inequality}
 \R + \Rb \lesssim c(\Izero) (1+ (\R+ \Rb)^{\frac{7}{8}}) +C\delta^{\frac{1}{32}}.
\end{equation}
Thus, if $\delta$ is sufficiently small, we have
\begin{equation*}
 \R + \Rb \lesssim c(\Izero).
\end{equation*}
This yields the \textbf{Main Estimates} \eqref{main estimates}.

The derivation for \eqref{final inequality} consists of two steps. The first step is to derive energy estimates for curvature components; the second step is to derive energy estimates for one derivatives of curvature components. The first step is done in \cite{K-R-09}:
\begin{proposition}[Proposition 14.9 in \cite{K-R-09}]\label{energy estimates for curvature}
If $\delta$ is sufficiently small, we have
\begin{align*}
\|\alpha\|_{L^2_{(sc)}(H_u)} + \|\beta\|_{L^2_{(sc)}(\Hb_{\ub})} &\lesssim \delta^{-\frac{1}{2}}\Izero + C\delta^{\frac{1}{4}},\\
\|(\beta, \rho, \sigma, \betab) \|_{L^2_{(sc)}(H_u)} + \|(\rho, \sigma, \betab, \alphab)\|_{L^2_{(sc)}(\Hb_{\ub})} &\lesssim \Izero + c(\Izero)\R^{\frac{1}{2}}+C\delta^{\frac{1}{8}}.
\end{align*}
\end{proposition}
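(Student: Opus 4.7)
The plan is to apply the fundamental energy identity \eqref{basic energy identity} to the curvature tensor $W=R$. Because the Einstein vacuum equations \eqref{divergence of R} make $R$ a divergence-free Weyl field, the bulk term $\iint \Divergence Q(X,Y,Z)$ vanishes and only the initial flux on $H_0$ (the data on $\Hb_0$ being trivial) and the deformation integral $\iint (\pi\cdot Q)(X,Y,Z)$ remain on the right. To recover every null curvature component as a boundary flux I would run the identity four times, with $(X,Y,Z)$ equal to $(L,L,L)$, $(L,L,\Lb)$, $(L,\Lb,\Lb)$ and $(\Lb,\Lb,\Lb)$. Reading off from $Q_{4444}=2|\alpha|^2$, $Q_{4443}=4|\beta|^2$, $Q_{4433}=4(\rho^2+\sigma^2)$, $Q_{3334}=4|\betab|^2$ and $Q_{3333}=2|\alphab|^2$, the first choice produces precisely the flux norms of $\alpha$ on $H_u$ and $\beta$ on $\Hb_{\ub}$ appearing in the first displayed inequality, while the remaining three choices produce the four flux norms of the second displayed inequality.

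For the right-hand side I would control the initial flux $\int_{H_0} Q(X,Y,Z,L)$ by $\Rinitial\lesssim c(\Izero)$; this supplies the leading $\delta^{-\frac{1}{2}}\Izero$ in the first estimate (reflecting the anomalous scaling of $\alpha$) and the plain $\Izero$ in the second. The main work is the deformation term. Expanding $^{(L)}\pi$ and $^{(\Lb)}\pi$ from the explicit expressions listed after \eqref{basic energy identity}, each contribution is a triple product $\psi\cdot\Psi\cdot\Psi'$ with $\psi$ a connection coefficient and $\Psi,\Psi'$ null curvature components harvested from $Q$. Passing to scale-invariant norms I would estimate
\[
\Bigl|\iint_{\D(u,\ub)} \psi\,\Psi\,\Psi'\Bigr|\ \lesssim\ \delta^{s}\,\|\psi\|_{L^\infty_{(sc)}}\,\|\Psi\|_{L^2_{(sc)}(H_u)}\,\|\Psi'\|_{L^2_{(sc)}(\Hb_{\ub})},
\]
where the exponent $s$ is read off from signature counting and Proposition~\ref{theoremA} gives uniform control of $\|\psi\|_{L^\infty_{(sc)}}$ by $C=C(\Ozero,\R,\Rb)$. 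For generic triples this yields a gain of at least $\delta^{1/4}$ (respectively $\delta^{1/8}$) beyond the target threshold, producing the $C\delta^{\frac{1}{4}}$ and $C\delta^{\frac{1}{8}}$ remainders.

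The obstacle is the cluster of borderline terms in which two anomalous factors meet. The most delicate instance comes from the $(L,L,L)$ identity, whose error generates a piece schematically of the form $\iint \chibh\cdot\alpha\cdot\alpha$ through the $^{(L)}\pi_{33}$ and $^{(L)}\pi_{3a}$ entries paired with $Q_{a344}$ and $Q_{ab44}$. Both $\chibh$ and $\alpha$ are anomalous in their natural norms, so a crude $L^\infty_{(sc)}\cdot L^2_{(sc)}\cdot L^2_{(sc)}$ split would cost $\delta^{-1/2}$ and destroy the bootstrap. The fix is exactly the trio of devices emphasized in the introduction: either split $\chibh$ into two $L^4_{(sc)}$ factors (trick~1, recovering $\delta^{1/4}$), or integrate by parts to transfer a bad $\nabla_3$ off $\alpha$ onto a non-anomalous factor using \eqref{NBE_Lb_alpha} (trick~2, recovering $\delta^{1/2}$), or, whenever signature arithmetic forbids the offending term altogether, invoke the structural cancellation (trick~3). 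The analogous argument disposes of the $\chih\cdot\beta\cdot\beta$-type term on $\Hb_{\ub}$. The residual $c(\Izero)\R^{\frac{1}{2}}$ in the second estimate reflects a single factor of $\alpha$ in some errors that must be absorbed by interpolating with the (anomalous) first estimate itself. Collecting everything yields the two inequalities as stated.
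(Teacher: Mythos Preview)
The paper does not prove this proposition: it is quoted verbatim as Proposition~14.9 of \cite{K-R-09} and used as a black box for the rest of the argument. So there is no ``paper's own proof'' to compare against; your outline is essentially the argument one finds in \cite{K-R-09}---apply \eqref{basic energy identity} to $W=R$ with the four null triples $(L,L,L)$, $(L,L,\Lb)$, $(L,\Lb,\Lb)$, $(\Lb,\Lb,\Lb)$, note that $\Divergence Q$ vanishes by \eqref{divergence of R}, and bound the deformation error.

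That said, your bookkeeping of the borderline terms is off in a way that matters. In the $(L,L,L)$ identity only $^{(L)}\pi$ enters, and from the table following \eqref{basic energy identity} its nonzero components are $^{(L)}\pi_{33}=-8\Omega^{-1}\omegab$, $^{(L)}\pi_{3a}=2\Omega^{-1}\eta_a$ and $^{(L)}\pi_{ab}=\Omega^{-1}\chi_{ab}$; there is no $\chibh$ anywhere in $^{(L)}\pi$. Moreover, after raising indices, $\pi_{33}$ and $\pi_{3a}$ pair with $Q_{4444}=2|\alpha|^2$ and $Q_{a444}=4\alpha_{ab}\beta_b$, not with $Q_{a344}$ and $Q_{ab44}$ as you wrote. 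The genuine double-$\alpha$ contribution in the anomalous estimate is therefore $\omegab\cdot|\alpha|^2$, and since $\omegab$ is non-anomalous in $L^\infty_{(sc)}$ this term is absorbed directly by Gronwall---none of the three tricks is needed here. The truly delicate errors, and the source of the $c(\Izero)\R^{1/2}$ in the second inequality, arise instead in the $(L,L,\Lb)$ and $(L,\Lb,\Lb)$ identities, where $^{(\Lb)}\pi_{ab}=\Omega^{-1}\chib_{ab}$ contracts against $Q_{ab44}$ or $Q_{ab34}$ to produce $\chibh\cdot\alpha\cdot\rho$-type terms (the trace part drops because $\alpha$ is traceless); it is here that the $L^4_{(sc)}$ splitting and the refined bounds of Proposition~\ref{pricise on chi chib} come in. So your global strategy is correct, but the identification of which triples are dangerous and why needs to be redone before the argument can close.
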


As we explained in the introduction, our main target is the second step, i.e. to derive energy estimates for one derivatives of curvature components in the form of \eqref{final inequality} without constructing rotational vector fields $\O^{(i)}$'s. This is done in the rest of the paper.

\section{Preliminary Estimates}
In this section, we collect some estimates either already derived in \cite{K-R-09} or relatively easy to prove directly.

\subsection{Improved Estimates on Curvature}
We have $L^2$ estimates for curvature on each leaf $S = S_{u,\ub}$.
\begin{proposition}[Proposition 6.6, Proposition 6.9 in \cite{K-R-09}]\label{L2 estimates for curvature}
If $\delta$ is sufficiently small, we have
\begin{equation*}
\delta^{\frac{1}{2}}\|\alpha\|_{L^2_{(sc)}(S)} + \|(\beta,\rho,\sigma,\betab,\alphab)\|_{L^2_{(sc)}(S)} \lesssim C.
\end{equation*}
\end{proposition}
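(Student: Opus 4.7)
The plan is to turn each Bianchi equation \eqref{NBE_Lb_alpha}--\eqref{NBE_L_alphab} into a transport equation on the two-spheres $S_{u,\ub}$ and integrate in whichever null direction, $L$ or $\Lb$, gives access to known initial values. For the five components $(\beta,\rho,\sigma,\betab,\alphab)$ I would integrate the $\nabla_4$-Bianchi equation along $L$ from $\Hb_0$, where the data is trivial (Minkowski). For $\alpha$ I would instead integrate \eqref{NBE_Lb_alpha} along $\Lb$ from $H_0$, starting from the initial bound $\|\alpha\|_{L^2_{(sc)}(S_{0,\ub})}\lesssim \delta^{-1/2}c(\Izero)$ supplied by \eqref{initial_ansatz_1_prime}; this is exactly what accounts for the extra factor $\delta^{1/2}$ in front of $\|\alpha\|_{L^2_{(sc)}(S)}$ in the statement of the proposition.

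After multiplying by an integrating factor on the 2-sphere to absorb the $\tr\chi$ or $\tr\chib$ coefficient on the left-hand side, a standard $L^2\to L^2$ transport inequality on spheres, together with the signature bookkeeping of Section~2.3, reduces matters to an estimate of the form
\[
\|\Psi\|_{L^2_{(sc)}(S_{u,\ub})}\lesssim \|\Psi\|_{L^2_{(sc)}(S^\star)} + \|F\|_{L^2_{(sc)}(\Sigma)},
\]
where $S^\star$ is the appropriate initial sphere, $\Sigma$ is the null hypersurface of integration ($H_u^{(0,\ub)}$ for transport along $L$, $\Hb_{\ub}^{(0,u)}$ for transport along $\Lb$), and $F$ denotes the right-hand side of the Bianchi equation. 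The right-hand side has two kinds of terms. The angular-derivative terms $\nabla\Psi'$ (e.g.\ $\nabla\beta$ in \eqref{NBE_Lb_alpha}, $\nabla\alpha$ in \eqref{NBE_L_beta}) are exactly the quantities controlled by the bootstrap norms $\Rone$ on $H_u$ and $\Roneb$ on $\Hb_{\ub}$. The quadratic nonlinearities of schematic form $\psi\cdot\Psi'$ (for instance $\chibh\,\rho$, $\eta\,\beta$, $\chih\cdot\alphab$) are handled by placing $\psi\in L^\infty_{(sc)}(S)$, where $\Ozeroinfinity\lesssim C$ by Proposition~\ref{theoremA}, and $\Psi'\in L^2_{(sc)}(S)$, controlled by $\R+\Rb$; the scale-invariant H\"older inequality \eqref{Holder} then gains a factor of $\delta^{1/2}$ from each such product. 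A Gronwall iteration in the null parameter closes the estimate.

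The main obstacle is the careful tracking of anomalous $\delta^{-1/2}$ factors carried by $\alpha$ on outgoing cones and by $\chih$ in $L^\infty_{(sc)}$. The most delicate term is $3(\chih\rho+{}^*\!\chih\,\sigma)$ in \eqref{NBE_Lb_alpha}: the $\delta^{1/2}$ gain from H\"older is exactly compensated by the $\delta^{-1/2}$ anomaly of $\chih$, which is precisely consistent with the anomalous bound $\|\alpha\|_{L^2_{(sc)}(S)}\lesssim \delta^{-1/2}C$ one is trying to prove. A second subtlety is that $\tr\chib = \trchibt + \tr\chib_0$ with non-anomalous principal part $\tr\chib_0\sim 1$; by Remark~\ref{gain or not gain} this part yields no $\delta^{1/2}$ gain from \eqref{Holder}, so it must be eliminated by the integrating factor rather than by H\"older, leaving only $\trchibt\in\Ozeroinfinity$ in the error, for which H\"older does supply the needed $\delta^{1/2}$. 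Once these details are handled for each of the six components separately, the proposition follows with a constant $C$ depending only on $\Ozero$, $\R$ and $\Rb$.
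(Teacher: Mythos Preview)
The paper does not actually prove this proposition; it is quoted verbatim from \cite{K-R-09} (Propositions~6.6 and~6.9 there) and invoked as a black box. Your sketch is the standard transport-equation argument and is correct in outline: integrate the $\nabla_4$-Bianchi equations for $\beta,\rho,\sigma,\betab,\alphab$ from the trivial data on $\Hb_0$, integrate the $\nabla_3$-equation \eqref{NBE_Lb_alpha} for $\alpha$ from $H_0$, absorb the $\tr\chib_0$ part by an integrating factor, bound the $\nabla\Psi'$ top-order terms by $\Rone$ or $\Roneb$ on the relevant hypersurface, and handle the quadratic $\psi\cdot\Psi'$ terms via $L^\infty_{(sc)}\times L^2_{(sc)}$ using \eqref{Holder} and Proposition~\ref{theoremA}. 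This is essentially the argument in \cite{K-R-09}, so there is nothing to compare.

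One small wording slip: in your second paragraph you say the factor $\Psi'$ in the quadratic terms is placed in $L^2_{(sc)}(S)$ and controlled by $\R+\Rb$; strictly speaking $\R,\Rb$ are hypersurface norms, so what you mean (and what your displayed inequality correctly says) is that after integrating along the null direction the contribution is $\|\psi\cdot\Psi'\|_{L^2_{(sc)}(\Sigma)}\lesssim \delta^{1/2}\|\psi\|_{L^\infty_{(sc)}}\|\Psi'\|_{L^2_{(sc)}(\Sigma)}$, with $\Sigma=H_u$ or $\Hb_{\ub}$. This is harmless.
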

We also have $L^4_{(sc)}$ estimates on curvature components. We recall again that the constant $C$ depends on $\R$ and $\Rb$ which has information on the one horizontal derivatives of the curvature.
\begin{proposition}[Lemma 10.1 in \cite{K-R-09}]\label{L4 estimates for curvature}
If $\delta$ is sufficiently small, we have
\begin{equation*}
\delta^{\frac{1}{4}}\|\alpha\|_{L^4_{(sc)}(S)} + \|(\beta,\rho,\sigma,\betab,\alphab)\|_{L^4_{(sc)}(S)} \lesssim C.
\end{equation*}
\end{proposition}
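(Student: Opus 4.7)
The plan is to obtain a pointwise-in-$(u,\ub)$ bound on $\|\Psi\|_{L^4_{(sc)}(S_{u,\ub})}$ for each curvature component $\Psi$ by integrating the appropriate Bianchi equation along a null generator, seeded by initial data on $H_0$ (controlled by $\Rinitial$) or by the Minkowskian triviality on $\Hb_0$, and closed by the hypersurface-$L^2$ control of $\nabla\Psi$ encoded in $\R$ and $\Rb$ (via Proposition \ref{energy estimates for curvature}).

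Concretely, I would multiply the Bianchi equation for $\Psi$ by $|\Psi|^2\Psi$ and integrate on $S_{u,\ub}$; the identity
\[
\nabla_N \int_S |\Psi|^4 = 4\int_S |\Psi|^2 \Psi\cdot \nabla_N\Psi + 2\int_S (\tr\chi\text{ or }\tr\chib)\,|\Psi|^4
\]
then yields a transport ODE for $\|\Psi\|_{L^4(S)}^4$ along a null generator. The direction $N$ is chosen so that the principal angular-derivative term on the right is non-anomalous: \eqref{NBE_Lb_alpha} for $\alpha$ (principal part $\nabla\tensor\beta$), \eqref{NBE_L_alphab} for $\alphab$ (principal part $-\nabla\tensor\betab$), \eqref{NBE_Lb_beta} for $\beta$ (principal part $\nabla\rho+\nablastar\sigma$), while for $\rho,\sigma,\betab$ either direction works. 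The principal term $\int_S |\Psi|^2\Psi\cdot\nabla\Psi'$ is treated by integration by parts on the sphere, converting it into $\int_S |\Psi|^2\nabla\Psi\cdot\Psi'$ up to schematic corrections, which H\"older bounds by $\|\Psi\|_{L^4(S)}^2\|\nabla\Psi\|_{L^2(S)}\|\Psi'\|_{L^4(S)}$. Integration in the null parameter followed by Cauchy-Schwarz absorbs $\|\nabla\Psi\|_{L^2(S)}$ into $\|\nabla\Psi\|_{L^2_{(sc)}(H)}\lesssim \R+\Rb$, while $\|\Psi'\|_{L^4(S)}$ feeds back into the bootstrap.

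The lower-order terms are quadratic products of the form $\psi\cdot\Psi$ paired with $|\Psi|^2\Psi$; each such pairing gains a $\delta^{1/2}$ via the scale-invariant H\"older inequality \eqref{Holder}, and the resulting contributions are controlled by the $\O$-norm together with Proposition \ref{L2 estimates for curvature}. A small-$\delta$ absorption step followed by Gr\"onwall in $\ub$ (or $u$) closes the estimate with a constant depending only on $\R,\Rb,\Oinitial$.

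The main obstacle is the $\delta$-bookkeeping around the anomalous component $\alpha$: the stated inequality carries the compensating factor $\delta^{1/4}$ in front of $\|\alpha\|_{L^4_{(sc)}(S)}$, so one must verify via the signature rules of Subsection 2.3 that each interaction on the right-hand side of \eqref{NBE_Lb_alpha} contributes exactly the correct power of $\delta$. The choice of the $\nabla_3$-equation for $\alpha$ (rather than the $\nabla_4$-equation, which would produce the anomalous $\nabla\alpha$) is precisely what makes this counting close, and the same consideration motivates the $\nabla_4$-choice for $\alphab$. Once this term-by-term count is checked, the same argument produces the bound uniformly in $(u,\ub)$ for all six components simultaneously.
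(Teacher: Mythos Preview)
Your transport approach has a genuine gap at the H\"older step. After integrating by parts on the sphere you claim
\[
\int_{S}|\Psi|^2\,\nabla\Psi\cdot\Psi' \;\le\; \|\Psi\|_{L^4(S)}^2\,\|\nabla\Psi\|_{L^2(S)}\,\|\Psi'\|_{L^4(S)},
\]
but the reciprocals of the exponents are $\tfrac14+\tfrac14+\tfrac12+\tfrac14=\tfrac54\neq 1$, so this is not a valid H\"older inequality. With $\nabla\Psi$ placed in $L^2(S)$ the remaining three factors must share total exponent $\tfrac12$, which forces $\Psi'\in L^\infty(S)$; alternatively, keeping $\Psi'\in L^4(S)$ forces two copies of $\Psi$ into $L^8(S)$, strictly stronger than the $L^4$ bound you are trying to prove. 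The same obstruction appears if you skip the integration by parts: $\int_S|\Psi|^3|\nabla\Psi'|$ bounded with $\nabla\Psi'\in L^2(S)$ requires $\Psi\in L^6(S)$. In short, a pure transport argument for $\|\Psi\|_{L^4(S)}$ does not close against a principal term controlled only in $L^2$ along a null hypersurface; you are one Sobolev exponent short.

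This is exactly why the paper does not argue by transport here. Its proof invokes the trace-type Sobolev inequality of Proposition~\ref{Sobolev Trace},
\[
\|\phi\|_{L^4_{(sc)}(S)}\lesssim\bigl(\delta^{1/2}\|\phi\|_{L^2_{(sc)}(H)}+\|\nabla\phi\|_{L^2_{(sc)}(H)}\bigr)^{1/2}\bigl(\delta^{1/2}\|\phi\|_{L^2_{(sc)}(H)}+\|\nabla_4\phi\|_{L^2_{(sc)}(H)}\bigr)^{1/2},
\]
which converts hypersurface $L^2$ control of $\phi$, $\nabla\phi$ and $\nabla_4\phi$ (respectively $\nabla_3\phi$ on $\Hb$) directly into $L^4(S)$, with no pointwise-in-$S$ information on $\nabla\Psi$ required. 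The Bianchi equations then enter only to rewrite $\nabla_4\Psi$ (or $\nabla_3\Psi$) as $\nabla\Psi'+\psi\cdot\Psi$, all of which sit in $\R$, $\Rb$ and $\mathcal O$. Your instinct to choose the null direction so that the principal angular term is non-anomalous is correct and is used in the paper as well, but the mechanism that closes the estimate is the trace inequality, not a Gr\"onwall argument.
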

This proposition is proved by virtue of null Bianchi equations \eqref{NBE_Lb_alpha}-\eqref{NBE_L_alphab} and following scale invariant Sobolev trace type inequalities stated as Proposition 4.15 in \cite{K-R-09}.
\begin{proposition}\label{Sobolev Trace} Given an arbitrary tensor field $\phi$, we have
\begin{equation*}
 \|\phi\|_{L^4_{(sc)}(S)} \lesssim (\delta^{\frac{1}{2}}\|\phi\|_{L^2_{(sc)}(H_u)}+\|\nabla \phi\|_{L^2_{(sc)}(H_u)})^{\frac{1}{2}}(\delta^{\frac{1}{2}}\|\phi\|_{L^2_{(sc)}(H_u)}+\|\nabla_4 \phi\|_{L^2_{(sc)}(H_u)})^{\frac{1}{2}},
\end{equation*}
\begin{equation*}
 \|\phi\|_{L^4_{(sc)}(S)} \lesssim (\delta^{\frac{1}{2}}\|\phi\|_{L^2_{(sc)}(\Hb_{\ub})}+\|\nabla \phi\|_{L^2_{(sc)}(\Hb_{\ub})})^{\frac{1}{2}}(\delta^{\frac{1}{2}}\|\phi\|_{L^2_{(sc)}(\Hb_{\ub})}+\|\nabla_3 \phi\|_{L^2_{(sc)}(\Hb_{\ub})})^{\frac{1}{2}}.
\end{equation*}
\end{proposition}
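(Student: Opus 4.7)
The proposition is a scale-invariant Sobolev trace inequality, bounding $L^4$ on a two-sphere $S \subset H_u$ by a symmetric product of $L^2(H_u)$ norms of $\phi$ and its two first-order \emph{tangential} derivatives $\nabla\phi$ and $\nabla_4 \phi$. By symmetry it suffices to treat the outgoing case; the $\Hb_{\ub}$ version follows by swapping $e_4 \leftrightarrow e_3$ and $H_u \leftrightarrow \Hb_{\ub}$.

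My plan is to combine two classical ingredients: (i) the 2D Gagliardo--Nirenberg inequality on $S$, and (ii) a transport--type lemma along the null generator $e_4$ of $H_u$. For (i), applying the standard $W^{1,1}(S) \hookrightarrow L^2(S)$ embedding to the scalar $|\phi|^2$ (using $|\nabla|\phi|^2| \leq 2|\phi||\nabla\phi|$ followed by Cauchy--Schwarz) gives the sphere-level bound $\|\phi\|_{L^4(S)}^2 \lesssim \|\phi\|_{L^2(S)}(\|\nabla \phi\|_{L^2(S)} + r^{-1}\|\phi\|_{L^2(S)})$, with $r \sim 1$ the radius of $S$. For (ii), differentiating along $e_4$ via the first variation formula $\partial_{\ub} \int_{S_{u,\ub}} F = \int_{S_{u,\ub}} (e_4 F + \tr\chi \, F)\,\Omega$, choosing $\ub_0 \in [0, \delta]$ by the mean value theorem so that $\int_{S_{u,\ub_0}} F \leq \delta^{-1}\int_{H_u} F$, and integrating from $\ub_0$ to the target slice (using $\|\tr\chi\|_{L^\infty} \lesssim 1$), I obtain for any nonnegative scalar $F$ the transport inequality $\int_{S_{u,\ub}} F \lesssim \delta^{-1} \int_{H_u} F + \int_{H_u} |e_4 F|$.

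I then apply this transport lemma twice. First, with $F = |\phi|^2$ and $e_4 F = 2\phi \cdot \nabla_4 \phi$, it produces $\|\phi\|_{L^2(S)}^2 \lesssim \delta^{-1}\|\phi\|_{L^2(H_u)}^2 + \|\phi\|_{L^2(H_u)}\|\nabla_4 \phi\|_{L^2(H_u)}$ at the specific $\ub$ under consideration. Second, with $F = |\phi|^4$, the bound $\int_{H_u}|\phi|^4 = \int_0^{\delta}\|\phi\|_{L^4(S)}^4\,d\ub$ is controlled, via the squared form of (i) integrated in $\ub$ and a Cauchy--Schwarz, by $\sup_{\ub}\|\phi\|_{L^2(S)}^2 \cdot (\|\nabla\phi\|_{L^2(H_u)}^2 + \|\phi\|_{L^2(H_u)}^2)$, while the residual cubic--linear term $\int_{H_u}|\phi|^3|\nabla_4\phi|$ is organized by Cauchy--Schwarz combined again with (i) on each sphere, so that only first--order derivatives of $\phi$ appear. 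Collecting everything yields, in non--scale--invariant form, the estimate $\|\phi\|_{L^4(S)}^2 \lesssim (\|\phi\|_{L^2(H_u)} + \|\nabla\phi\|_{L^2(H_u)})(\|\phi\|_{L^2(H_u)} + \|\nabla_4 \phi\|_{L^2(H_u)})$, where the product structure is produced from an intermediate sum $\|\phi\|_{L^2(H_u)}(\|\phi\|_{L^2(H_u)} + \|\nabla\phi\|_{L^2(H_u)} + \|\nabla_4 \phi\|_{L^2(H_u)})$ via the elementary inequality $a(a+b+c) \leq (a+b)(a+c)$ valid for $a,b,c \geq 0$.

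The last step is to pass to scale invariant norms using $sc(\nabla \phi) = sc(\phi) - \tfrac{1}{2}$ and $sc(\nabla_4 \phi) = sc(\phi) - 1$ to rewrite the non--scale--invariant bound. The powers of $\delta$ collapse to a factor of $\delta^{1/2}$ on the zeroth-order term in each factor, and using $\delta \leq 1$ the slight mismatch between the $\delta\|\phi\|_{L^2_{(sc)}(H_u)}$ produced in one factor and the $\delta^{1/2}\|\phi\|_{L^2_{(sc)}(H_u)}$ appearing in the statement is absorbed, giving exactly the claimed inequality. The main technical obstacle is the handling of the cubic--linear term $\int_{H_u}|\phi|^3|\nabla_4 \phi|$: any naive Hölder's inequality on $H_u$ tempts one to introduce $L^6(H_u)$ or $L^4(S)$ norms of $\nabla_4 \phi$, both of which cost additional regularity, so the correct argument has to keep the dyadic balance between the 2D sphere-Sobolev step and the 1D transport step in $\ub$ quite tightly.
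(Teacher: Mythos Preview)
The paper does not actually prove this proposition: it is quoted verbatim as ``Proposition 4.15 in \cite{K-R-09}'' and used as a black box, so there is no in-paper argument to compare against. Your sketch reproduces the standard proof one finds in \cite{K-R-09}: combine the two-dimensional Gagliardo--Nirenberg inequality on $S_{u,\ub}$ with a transport estimate along the $e_4$ generators of $H_u$, then rescale. That is exactly the intended route, and the outline is sound.

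Two small points. First, your ``non--scale--invariant form'' $\|\phi\|_{L^4(S)}^2 \lesssim (\|\phi\|_{L^2(H_u)}+\|\nabla\phi\|_{L^2(H_u)})(\|\phi\|_{L^2(H_u)}+\|\nabla_4\phi\|_{L^2(H_u)})$ cannot be correct as written: by dimensional analysis the $\ub$-interval has length $\delta$, so the zeroth-order term in the second factor must carry a $\delta^{-1/2}$ (this is precisely what, after passing to scale-invariant norms, produces the $\delta^{1/2}$ weights in the statement). You acknowledge this implicitly in the last paragraph, but the intermediate display should already carry the correct powers. Second, your treatment of the cubic--linear term $\int_{H_u}|\phi|^3|\nabla_4\phi|$ is the genuinely delicate step, as you correctly flag; the clean way through is to Cauchy--Schwarz in $\ub$ after applying Gagliardo--Nirenberg spherewise to the $|\phi|^3$ factor, so that one lands on $\sup_{\ub}\|\phi\|_{L^2(S)}\cdot\|\nabla_4\phi\|_{L^2(H_u)}\cdot(\|\phi\|_{L^2(H_u)}+\|\nabla\phi\|_{L^2(H_u)})$ and then closes with the transport bound on $\sup_{\ub}\|\phi\|_{L^2(S)}$. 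Your description gestures at this but does not quite spell it out.
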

Integrating along null hypersurfaces, we derive
\begin{corollary}\label{L4 estimates for curvature on null hypersurfaces}
If $\delta$ is sufficiently small, we have
\begin{align*}
\delta^{\frac{1}{4}}\|\alpha\|_{L^4_{(sc)}(H_u)}& + \|(\beta,\rho,\sigma,\betab,\alphab)\|_{L^4_{(sc)}(H_u)} \lesssim C, \\
\delta^{\frac{1}{4}}\|\alpha\|_{L^4_{(sc)}({\Hb}_{\ub})}& + \|(\beta,\rho,\sigma,\betab,\alphab)\|_{L^4_{(sc)}({\Hb}_{\ub})} \lesssim C.
\end{align*}
\end{corollary}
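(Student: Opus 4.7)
The plan is to obtain the hypersurface $L^{4}$ bounds simply by integrating the pointwise-in-$\ub'$ estimates on the leaves $S_{u,\ub'}$ that come out of the Sobolev trace inequality of Proposition~\ref{Sobolev Trace}. The key observation is that the right-hand side of that inequality only involves $L^{2}_{(sc)}$ norms on the whole hypersurface $H_{u}$ (resp.\ $\Hb_{\ub}$) and is therefore independent of the specific leaf $\ub'$ (resp.\ $u'$), so the leafwise bound is automatically uniform and the integration step is essentially free.

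Concretely, for each $\Psi\in\{\beta,\rho,\sigma,\betab,\alphab\}$ I would apply Proposition~\ref{Sobolev Trace} on the leaf $S_{u,\ub'}$. The factors $\|\Psi\|_{L^{2}_{(sc)}(H_{u})}$ and $\|\nabla\Psi\|_{L^{2}_{(sc)}(H_{u})}$ are controlled by $\R\lesssim C$ directly from the definitions. For $\|\nabla_{4}\Psi\|_{L^{2}_{(sc)}(H_{u})}$ I would invoke whichever of the null Bianchi equations \eqref{NBE_L_beta}, \eqref{NBE_L_sigma}, \eqref{NBE_L_rho}, \eqref{NBE_L_betab}, \eqref{NBE_L_alphab} is appropriate; each has the schematic form $\nabla_{4}\Psi=\nabla\Psi'+\psi\cdot\Psi''$, where the angular derivative $\nabla\Psi'$ sits in $\R$ while the nonlinear products $\psi\cdot\Psi''$ are handled by the scale invariant H\"older inequality \eqref{Holder} with a gain of $\delta^{1/2}$, giving a total bound of $\delta^{1/2}\O\R\lesssim C\delta^{1/2}$. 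This produces the pointwise estimate $\|\Psi\|_{L^{4}_{(sc)}(S_{u,\ub'})}\lesssim C$ uniformly in $\ub'$. For the anomalous $\alpha$ the same recipe works, but with $\|\alpha\|_{L^{2}_{(sc)}(H_{u})}\lesssim\delta^{-1/2}\R$ and $\|\nabla_{4}\alpha\|_{L^{2}_{(sc)}(H_{u})}\lesssim\delta^{-1/2}\R$ (because $\Rone$ controls $\delta^{1/2}\nabla_{4}\alpha$), yielding the anomalous bound $\delta^{1/4}\|\alpha\|_{L^{4}_{(sc)}(S_{u,\ub'})}\lesssim C$.

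Passing from leaves to $H_{u}$ then uses only the identity
\[
\|\Psi\|_{L^{4}_{(sc)}(H_{u})}^{4}=\delta^{-1}\int_{0}^{\ub}\|\Psi\|_{L^{4}_{(sc)}(S_{u,\ub'})}^{4}\,d\ub',
\]
so the factor $\ub\le\delta$ coming out of the integral cancels the $\delta^{-1}$ prefactor and yields the first claim of the corollary. The argument on $\Hb_{\ub}$ is completely symmetric: one uses the second inequality of Proposition~\ref{Sobolev Trace}, the $\nabla_{3}$ Bianchi equations \eqref{NBE_Lb_alpha}, \eqref{NBE_Lb_beta}, \eqref{NBE_Lb_sigma}, \eqref{NBE_Lb_rho}, \eqref{NBE_Lb_betab}, and integrates in $u'\in[0,1]$; the only bookkeeping change is that on $\Hb_{\ub}$ it is $\beta$ rather than $\alpha$ that is $\delta^{-1/2}$-anomalous in $L^{2}$. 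The main obstacle, which is really inherited from Proposition~\ref{L4 estimates for curvature}, is verifying that the $\delta^{1/2}$ gained by the scale invariant H\"older inequality always absorbs the $\delta^{-1/2}$ loss from these two anomalous components in every nonlinear error term produced by the Bianchi equations, so that no residual power of $\delta$ is left on the right-hand side.
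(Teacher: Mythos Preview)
Your approach is correct and matches the paper's own proof, which is literally the single phrase ``Integrating along null hypersurfaces, we derive''; the paper simply takes the leaf-level bounds of Proposition~\ref{L4 estimates for curvature} (themselves proved exactly by the Sobolev trace inequality plus the null Bianchi equations, as you outline) and integrates in $\ub'$ or $u'$. One minor simplification: since Proposition~\ref{L4 estimates for curvature} already gives a uniform bound on every leaf $S_{u,\ub}$, you do not need to rerun the $\Hb_{\ub}$-version of the Sobolev trace inequality for the second line---the same $S$-estimates integrate directly in $u'$, and this sidesteps the bookkeeping about the $L^2_{(sc)}(\Hb_{\ub})$ anomaly of $\beta$ that you flagged.
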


We also collect some more precise estimates on $\chih$, $\chibh$ and $\omega$. They are essentially developed in Section 5 of \cite{K-R-09} provided we bound $\alpha$ by Proposition \ref{energy estimates for curvature}.
\begin{proposition}\label{pricise on chi chib}
If $\delta$ is sufficiently small, we have
\begin{align*}
 \|\chih\|_{L_{(sc)}^4(S)} \lesssim & \delta^{-\frac{1}{4}} c(\Izero) \R^{\frac{1}{2}} + C \delta^\frac{1}{8}, ~\|\chibh\|_{L_{(sc)}^4(S)} \lesssim  \delta^{-\frac{1}{4}}\Izero + \Rzerob^{\frac{1}{2}}\R^{\frac{1}{2}} + C \delta^\frac{1}{2},\\
\|\chih\|_{L^2_{(sc)}(H_u)} \lesssim & \delta^{-\frac{1}{2}} \Izero + C\delta^{\frac{1}{4}}, ~~~\|\omega\|_{L_{(sc)}^4(S)} \lesssim \Rzero^\frac{1}{2} \Rone^{\frac{1}{2}} + C\delta^{\frac{1}{4}}.
\end{align*}
\end{proposition}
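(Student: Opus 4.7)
The four estimates all follow by integrating the appropriate null structure equation along a null direction and then combining with the Sobolev trace inequality of Proposition~\ref{Sobolev Trace}. The key inputs are the new $L^2_{(sc)}(H_u)$ and $L^2_{(sc)}(\Hb_{\ub})$ bounds on the curvature from Proposition~\ref{energy estimates for curvature} together with the $\mathcal{O}$ estimate from Proposition~\ref{theoremA}.

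First I would handle $\|\chih\|_{L^2_{(sc)}(H_u)}$. Using the transport equation \eqref{NSE_L_chi}, $\nabla_4\chih + \tr\chi\,\chih = -2\omega\chih - \alpha$, I contract with $\chih$ and integrate on $H_u^{(0,\ub)}$; the only anomalous source is $\alpha$, so Gronwall together with $\|\alpha\|_{L^2_{(sc)}(H_u)}\lesssim \delta^{-\frac12}\Izero+C\delta^{1/4}$ from Proposition~\ref{energy estimates for curvature} gives the claimed $\delta^{-1/2}\Izero+C\delta^{1/4}$. To upgrade this to the $L^4_{(sc)}(S)$ estimate on $\chih$, I apply the Sobolev trace inequality of Proposition~\ref{Sobolev Trace} in the outgoing form; the three inputs are $\delta^{1/2}\|\chih\|_{L^2_{(sc)}(H_u)}\lesssim\Izero+C\delta^{3/4}$, $\|\nabla\chih\|_{L^2_{(sc)}(H_u)}\lesssim C$ from $\O$, and $\|\nabla_4\chih\|_{L^2_{(sc)}(H_u)}$ which via \eqref{NSE_L_chi} is controlled by $\|\alpha\|_{L^2_{(sc)}(H_u)}$ plus lower order. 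Combining these geometric means produces the anomalous factor $\delta^{-1/4}$ attached to the curvature norm, and the curvature factor is then bounded by $c(\Izero)\R^{1/2}$, with the remainder absorbed into $C\delta^{1/8}$.

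For $\chibh$ I use \eqref{NSE_Lb_chib} and integrate in $u$ along $\Hb_{\ub}$; note that $\chibh$ on $S_{0,\ub}$ is nontrivial and is directly controlled by $\delta^{-1/4}\Izero$ via the null structure equations restricted to $H_0$. The source term $\alphab$ is not anomalous on $\Hb_{\ub}$, so applying the incoming version of Proposition~\ref{Sobolev Trace} to $\alphab$ (with $\nabla_3\alphab$ controlled through the Bianchi equation \eqref{NBE_Lb_betab}) one obtains $\|\alphab\|_{L^4_{(sc)}(S)}\lesssim \Rzerob^{1/2}\R^{1/2}+C\delta^{1/2}$, which after integration in $u$ yields precisely the middle term $\Rzerob^{1/2}\R^{1/2}$.

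For $\omega$, equation \eqref{NSE_Lb_omega} gives $\nabla_3\omega = \frac12\rho + \text{(quadratic in }\eta,\etab,\omega\text{)}$. I integrate along $\Lb$ starting from $\omega|_{H_0}$, which is bounded by $\Izero$, hence contributes to the $C\delta^{1/4}$ remainder. The leading contribution is the $\rho$ source: applying the Sobolev trace inequality on $H_u$ gives $\|\rho\|_{L^4_{(sc)}(S)}\lesssim \Rzero^{1/2}\Rone^{1/2}+C\delta^{1/4}$ (using \eqref{NBE_L_rho} to bound $\nabla_4\rho$), and integrating in $u$ against the nontrivial weight produced by scale invariance yields the stated bound.

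The main obstacle is bookkeeping the precise powers of $\delta$ through the anomalies: $\chih$, $\alpha$, $\nabla_4\alpha$ and $\chibh$ at $H_0$ all lose a $\delta^{-1/2}$ relative to the generic scaling, while $\tr\chib\sim 1$ never gains any power under H\"older as noted in Remark~\ref{gain or not gain}; one must carefully verify that each anomalous factor is absorbed exactly once on the right-hand side and that the quadratic error terms (e.g.\ $\omega\chih$, $\omegab\chibh$, $|\eta-\etab|^2$) have enough smallness from H\"older \eqref{Holder} combined with the $\O$ estimate to contribute only to the harmless $C\delta^{1/8}$ or $C\delta^{1/4}$ remainders.
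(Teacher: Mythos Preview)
Your approach is correct and is precisely the methodology of Section~5 of \cite{K-R-09}, to which the paper defers without giving its own proof. The paper's entire argument for this proposition is the single sentence ``They are essentially developed in Section~5 of \cite{K-R-09} provided we bound $\alpha$ by Proposition~\ref{energy estimates for curvature},'' so your outline in fact supplies more detail than the paper itself.

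One small point of comparison: for the $L^4_{(sc)}(S)$ bound on $\chih$ you apply the trace inequality of Proposition~\ref{Sobolev Trace} directly to $\chih$, whereas the route in \cite{K-R-09} is to integrate the transport equation \eqref{NSE_L_chi} in the $L^4_{(sc)}(S)$ norm along $e_4$ from the trivial data on $\Hb_0$ and bound the source $\alpha$ in $L^4_{(sc)}(S)$ via the trace inequality (this is exactly how Lemma~\ref{L4 alpha precise} is proved later in the paper). Both routes are valid; the transport-in-$L^4$ route makes the appearance of the factor $\R^{1/2}$ (coming from $\|\nabla\alpha\|_{L^2_{(sc)}(H_u)}^{1/2}$ inside the trace bound on $\alpha$) slightly more transparent, since your route requires a precise bound on $\|\nabla\chih\|_{L^2_{(sc)}(H_u)}$ rather than just $\O\lesssim C$ to isolate the $\R^{1/2}$ dependence. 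The same remark applies to $\chibh$ and $\omega$: integrating the transport equations \eqref{NSE_Lb_chib} and \eqref{NSE_Lb_omega} in $L^4_{(sc)}(S)$ and then bounding $\|\alphab\|_{L^4_{(sc)}(S)}$, $\|\rho\|_{L^4_{(sc)}(S)}$ by the trace inequality gives the interpolation factors $\Rzerob^{1/2}\R^{1/2}$ and $\Rzero^{1/2}\Rone^{1/2}$ directly. Also note that in the gauge of \cite{K-R-09} one has $\Omega=1$ on $H_0$, hence $\omega|_{H_0}=0$, so the initial contribution for $\omega$ is actually zero rather than merely $O(\Izero)$.
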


\subsection{Comparison Estimates}\label{comparison}
The aim is to compare null components of derivatives of curvature and derivatives of null components of curvature. The first lemma compares $\nabla_N \Psi$ with $\Psi(D_N R)$ where $N =L$ or $\Lb$.
\begin{lemma}\label{comparison lemma DN commute with Psi}
For $N=L$ or $\Lb$, we have
\begin{equation*}
\|\Psi(D_N R)-\nabla_N \Psi\|_{L^2_{(sc)}(H)}+ \|\Psi(D_N R)-\nabla_N \Psi\|_{L^2_{(sc)}(\Hb)}  \lesssim  C \delta^{\frac{1}{4}}.
\end{equation*}
\end{lemma}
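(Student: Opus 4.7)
The plan is to expand the difference $\Psi(D_N R) - \nabla_N \Psi$ into a finite sum of pointwise products $\psi \cdot \Psi'$, where $\psi$ is a connection coefficient and $\Psi'$ is a null curvature component, and then bound each such product in $L^2_{(sc)}(H_u)$ and $L^2_{(sc)}(\Hb_{\ub})$ by $C\delta^{\frac{1}{4}}$ via the scale-invariant H\"older inequality \eqref{Holder} combined with the $L^4_{(sc)}$ bounds of Proposition~\ref{L4 estimates for curvature}.

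To derive the identity I would fix one component, say $\alpha_{ab} = R(e_a, e_4, e_b, e_4)$ with $N = L$, apply the Leibniz rule to $e_4(\alpha_{ab})$, and contrast with $\nabla_4 \alpha(e_a, e_b) = e_4(\alpha_{ab}) - \alpha(\nabla_4 e_a, e_b) - \alpha(e_a, \nabla_4 e_b)$, where $\nabla_4 e_a$ is the sphere projection of $D_4 e_a$. Splitting $D_4 e_a = \etab_a e_4 + \nabla_4 e_a$ and using $D_4 e_4 = -2\omega e_4$, all tangential contributions cancel and one is left with the clean identity $\alpha(D_4 R) - \nabla_4 \alpha = 4\omega\alpha$. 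The analogous formulas for the other components $\Psi \in \{\beta, \rho, \sigma, \betab, \alphab\}$ and for $N = \Lb$ follow in the same way, via the companion relations $D_3 e_3 = -2\omegab e_3$, $D_3 e_4 = 2\omegab e_4 + 2\eta^a e_a$, $D_3 e_a = \eta_a e_3 + \nabla_3 e_a$, and $D_4 e_3 = 2\omega e_3 + 2\etab^a e_a$. After decomposing the residual non-standard components such as $R_{abc4}$ or $R_{a3b4}$ into the null curvature components $\rho, \sigma, \beta, \betab$ via the two-dimensional Weyl symmetries, every term in $\Psi(D_N R) - \nabla_N \Psi$ assumes the schematic form $\psi \cdot \Psi'$ with $\psi \in \{\omega, \omegab, \eta, \etab\}$.

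The structural point that makes the lemma work is that $\chih$ and $\chibh$ never appear in these expansions: the normal parts of $D_L e_\alpha$ and $D_{\Lb} e_\alpha$ are built only out of $\omega, \omegab, \eta, \etab$, all of which are non-anomalous in $L^4_{(sc)}$. Consequently, for each term in the expansion,
\begin{equation*}
\|\psi \cdot \Psi'\|_{L^2_{(sc)}(S)} \lesssim \delta^{\frac{1}{2}} \|\psi\|_{L^4_{(sc)}(S)} \|\Psi'\|_{L^4_{(sc)}(S)} \lesssim \delta^{\frac{1}{2}} \cdot C \cdot \delta^{-\frac{1}{4}} C \lesssim C\delta^{\frac{1}{4}},
\end{equation*}
where the $\delta^{-\frac{1}{4}}$ loss only appears in the worst case $\Psi' \in \{\alpha, \alphab\}$. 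Integrating this uniform-in-$S$ bound in $\ub$ over $[0,\delta]$, respectively in $u$ over $[0,1]$, via the definitions $\|\cdot\|_{L^2_{(sc)}(H_u)}^2 = \delta^{-1}\int_0^{\ub}\|\cdot\|_{L^2_{(sc)}(S)}^2\,d\ub'$ and $\|\cdot\|_{L^2_{(sc)}(\Hb_{\ub})}^2 = \int_0^u\|\cdot\|_{L^2_{(sc)}(S)}^2\,du'$ delivers the claimed $C\delta^{\frac{1}{4}}$ estimate on both null hypersurfaces.

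The main obstacle is the case-by-case bookkeeping: one must verify across the twelve pairings of $\Psi \in \{\alpha, \beta, \rho, \sigma, \betab, \alphab\}$ with $N \in \{L, \Lb\}$ that the only connection coefficients multiplying curvature are those in $\{\omega, \omegab, \eta, \etab\}$, and that no dangerous product like $\chih \cdot \alpha$ (which would fail to gain $\delta^{\frac{1}{4}}$ since both factors are $L^4_{(sc)}$-anomalous) can sneak in through the decomposition of mixed Weyl components. Signature and scale bookkeeping should tightly constrain the list of admissible terms and keep the enumeration manageable.
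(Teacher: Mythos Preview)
Your proposal is correct and takes essentially the same approach as the paper: both arguments reduce $\Psi(D_N R)-\nabla_N\Psi$ to a schematic product $\psi_g\cdot\Psi$ with $\psi_g$ non-anomalous, and then close via the scale-invariant $L^4_{(sc)}\times L^4_{(sc)}\to L^2_{(sc)}$ H\"older inequality together with Proposition~\ref{L4 estimates for curvature}. Your version is simply more explicit about \emph{which} coefficients occur (namely $\omega,\omegab,\eta,\etab$, since $D_4 e_a,\,D_4 e_3,\,D_4 e_4$ and their $e_3$-counterparts only involve these), which makes transparent why the anomalous pair $\chih,\chibh$ is absent; one minor slip is that only $\alpha$, not $\alphab$, carries the $\delta^{-\frac14}$ anomaly in $L^4_{(sc)}$, but this does not affect the bound.
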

\begin{proof}
By direct computations, in schematic form, we have the following commutator formula,\footnote{\quad The term $\psi_g \cdot \Psi$ denotes a sum of products of this form. We recall once again that $\psi$ denote a generic connection coefficient and $\Psi$ denote a generic curvature component. The subindex $g$ means this component is not anomalous.}
\begin{equation*}
\Psi(D_N R)-\nabla_N \Psi = \psi_g \cdot \Psi
\end{equation*}
To estimate $\psi_g \cdot \Psi$ on some	 null hypersurface (say $H$ without loss of generality), we use $L^4_{(sc)}$ estimates on both factors and notice that $\Psi$ may cause a loss of $\delta^{\frac{1}{4}}$, thus we have
\begin{equation*}
\|\Psi(D_N R)-\nabla_N \Psi\|_{L^2_{(sc)}(H)} \lesssim \delta^{\frac{1}{2}}\|\psi_g\|_{L^4_{(sc)}(H)}\|\Psi\|_{L^4_{(sc)}(H)} \lesssim C \delta^{\frac{1}{4}},
\end{equation*}
since $\|\psi_g\|_{L^4_{(sc)}(H)} \lesssim C$ and $\|\Psi\|_{L^4_{(sc)}(H)} \lesssim C \delta^{-\frac{1}{4}}$. This completes the proof.
\end{proof}
The next lemma, by investigating null Bianchi equations, compares $\nabla_N \Psi$ with $\nabla \Psi'$.
\begin{lemma}\label{lemma estimates on nabla_N Psi}
We have following estimates,\\
(1). For $\nabla_3 \alpha$, we have
\begin{equation*}
\|\nabla_3 \alpha\|_{L^2_{(sc)}(H_u)} \lesssim \Izero \,\delta^{-\frac{1}{2}}+\Rone + C\delta^{\frac{1}{4}}, ~ \|\nabla_3 \alpha\|_{L^2_{(sc)}({\Hb}_{\ub})} \lesssim  \Izero \,\delta^{-\frac{1}{2}} + \Roneb + C\delta^{\frac{1}{4}},
\end{equation*}
(2). For $\nabla_N \Psi \notin \{\nabla_3 \alpha, \nabla_4 \alpha, \nabla_3 \alphab\}$, except $\nabla_4 \beta$ on ${\Hb}_{\ub}$ and $\nabla_3 \betab$ on $H_u$, we have
\begin{equation*}
\|\nabla_N \Psi\|_{L^2_{(sc)}(H_u)} \lesssim \Rzero + \Rone + C\delta^{\frac{1}{4}} , \quad \|\nabla_N \Psi\|_{L^2_{(sc)}({\Hb}_{\ub})} \lesssim \Rzerob + \Roneb + C\delta^{\frac{1}{4}},
\end{equation*}
\end{lemma}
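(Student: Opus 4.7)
The plan is to invoke directly the null Bianchi equations \eqref{NBE_Lb_alpha}--\eqref{NBE_L_alphab}, each of which provides a schematic identity
\begin{equation*}
\nabla_N \Psi = \nabla \Psi' \; + \; \tr\chi\cdot\Psi \text{ or } \tr\chib\cdot\Psi \; + \; \psi\cdot\Psi,
\end{equation*}
where $\Psi'$ is some other curvature component and $\psi$ is a generic connection coefficient. Taking $L^2_{(sc)}$ norms along $H_u$ or $\Hb_{\ub}$, the leading angular term $\nabla\Psi'$ is absorbed into $\Rone$ or $\Roneb$ \emph{provided} the corresponding angular derivative is included in those norms. This proviso accounts for the excluded cases: equation \eqref{NBE_L_beta} produces $\divergence\alpha$ and equation \eqref{NBE_Lb_betab} produces $\divergence\alphab$, while $\Roneb$ does not control $\nabla\alpha$ on $\Hb_{\ub}$ and $\Rone$ does not control $\nabla\alphab$ on $H_u$.

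For part (1), the equation \eqref{NBE_Lb_alpha} yields
\begin{equation*}
\nabla_3 \alpha \;=\; \nabla\tensor\beta \;-\; \tfrac12\tr\chib\cdot\alpha \;+\; 4\omegab\,\alpha \;-\; 3(\chih\rho + {}^*\!\chih\,\sigma) \;+\; (\zeta+4\eta)\tensor\beta .
\end{equation*}
The principal piece $\nabla\tensor\beta$ feeds $\Rone$ (or $\Roneb$). The term $\tr\chib\cdot\alpha$ is the only obstruction: because $\tr\chib = \trchibt + \tr\chib_0$ with $\tr\chib_0 \sim 1$ a non-small bounded scalar (Remark \ref{gain or not gain}), no $\delta$-gain is available, and we must pay the full $\|\alpha\|_{L^2_{(sc)}(H_u)} \lesssim \delta^{-1/2}\Izero + C\delta^{1/4}$ from Proposition \ref{energy estimates for curvature}, which is exactly the asymmetric $\Izero\,\delta^{-1/2}$ contribution in the statement. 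The remaining terms $\omegab\alpha$, $\chih\rho$, $\chih\sigma$, $\psi\cdot\beta$ each carry at most one anomalous factor; applying H\"older \eqref{Holder} on $S$ with two $L^4_{(sc)}$ factors and using Propositions \ref{L4 estimates for curvature} and \ref{pricise on chi chib}, each contributes at most $C\delta^{1/4}$ after integrating along the null hypersurface.

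For part (2) the argument is structurally identical: reading off $\nabla\Psi'$ gives $\Rone$ (on $H_u$) or $\Roneb$ (on $\Hb_{\ub}$), and the quadratic errors yield $C\delta^{1/4}$ by the same $L^4_{(sc)}$--$L^4_{(sc)}$ H\"older estimate. The crucial difference with part (1) is that, in every Bianchi equation other than \eqref{NBE_Lb_alpha}, the distinguished transport coefficient $\tr\chi\cdot\Psi$ or $\tr\chib\cdot\Psi$ pairs $\tr\chi$ or $\tr\chib$ with a curvature $\Psi\in\{\beta,\rho,\sigma,\betab,\alphab\}$ that is \emph{non-anomalous} on the hypersurface under consideration, so that it is controlled directly by $\Rzero$ or $\Rzerob$ without any $\delta^{-1/2}$ loss.

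The main technical burden is the anomaly bookkeeping: the factors $\chih$, $\chibh$, $\alpha$, $\alphab$, and $\beta|_{\Hb}$ each carry a $\delta^{-1/4}$-anomaly in the relevant $L^4_{(sc)}(S)$ norm. For each of the eight Bianchi identities being treated, on each of the two candidate hypersurfaces, one must verify that (a) the principal angular derivative lands in the controlled norm, (b) the unique transport coefficient is paired with a non-anomalous curvature component, and (c) every remaining quadratic interaction $\psi\cdot\Psi$ contains at most one anomalous factor, so that the $\delta^{1/2}$-gain of \eqref{Holder} defeats the $\delta^{-1/4}$-loss. Checking (a), (b), (c) case by case is routine but carries the full substance of the lemma, and in particular singles out $\nabla_3\alpha$, $\nabla_4\beta|_{\Hb_{\ub}}$, and $\nabla_3\betab|_{H_u}$ as precisely the configurations in which (a) or (b) fails.
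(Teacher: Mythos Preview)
Your proposal is correct and follows essentially the same approach as the paper: read off $\nabla_N\Psi$ from the relevant null Bianchi equation, isolate the principal angular term $\nabla\Psi'$ (which feeds $\Rone$ or $\Roneb$), separate out the transport coefficient $\tr\chib_0\cdot\Psi$ (which in the case $\Psi=\alpha$ gives the $\Izero\,\delta^{-1/2}$ loss via Proposition~\ref{energy estimates for curvature}, and otherwise is absorbed by $\Rzero$ or $\Rzerob$), and bound the remaining quadratic interactions by $C\delta^{1/4}$ using $L^4_{(sc)}\times L^4_{(sc)}$ H\"older. The paper only writes out the $\nabla_3\alpha$ case and declares the rest similar; your more explicit bookkeeping of the anomaly structure (items (a), (b), (c)) is a faithful expansion of that sketch.
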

\begin{remark}
We can not estimate on $\|\nabla_4 \beta\|_{L^2_{(sc)}({\Hb}_{\ub})}$ or $\|\nabla_3 \betab\|_{L^2_{(sc)}(H_u)}$ at the moment. The reason is that we do not have any knowledge on either $\|\nabla \alpha\|_{L^2_{(sc)}({\Hb}_{\ub})}$ or $\|\nabla \alphab\|_{L^2_{(sc)}(H_u)}$. But they are controlled in next section when we derive estimates on $\nabla_4 \alpha$ and $\nabla_3 \alphab$.
\end{remark}

\begin{proof}
We only sketch one estimate (which may be regarded as the most difficult one). The rest is similar so we skip. For $\nabla_3 \alpha$, according to \eqref{NBE_Lb_alpha}, we have
\begin{align*}
 \nabla_3 \alpha &=- \frac{1}{2}\tr \chib \alpha + \nabla \tensor \beta + 4\omegab \alpha - 3(\chih \rho + ^*\!\chih \sigma)+(\zeta + 4\eta)\tensor \beta\\
 &=\tr\chib_0 \cdot \alpha + \nabla \beta + \psi_g \cdot \alpha + \psi\cdot \Psi_g,
\end{align*}
We can bound $\nabla \beta$ by $\Roneb$ and bound last two terms by $C \delta^{\frac{1}{4}}$ thanks to H\"older's inequality. For $\tr\chib_0 \alpha$, in view of Proposition \ref{energy estimates for curvature}, it is bounded by $\Izero \delta^{-\frac{1}{2}}+ C \delta^{\frac{1}{4}}$. Combining all the above, we have the desired estimates for $\nabla_3 \alpha$. This completes the proof.
\end{proof}
Combining previous two lemmas, we derive the comparison estimates for null components of $D_3 R_{\alpha\beta\gamma\delta}$ and $D_4 R_{\alpha\beta\gamma\delta}$.
\begin{proposition}\label{comparison proposition}
(1). For $\alpha(D_3 R)$, we have
\begin{equation*}
\|\alpha(D_3 R)\|_{L^2_{(sc)}(H_u)} \lesssim \Izero \, \delta^{-\frac{1}{2}} + \Rone + C\delta^{\frac{1}{4}}, ~ \|\alpha(D_3 R)\|_{L^2_{(sc)}({\Hb}_{\ub})} \lesssim \Izero \, \delta^{-\frac{1}{2}} +\Roneb + C\delta^{\frac{1}{4}},
\end{equation*}
(2). For $\Psi(D_N R) \notin \{\alpha(D_3 R), \alpha(D_4 R), \alphab(D_3 R)\}$, except $\beta(D_4 R)$ on ${\Hb}_{\ub}$ and $\betab(D_3 R)$ on $H_u$, we have
\begin{equation*}
\|\Psi(D_N R)\|_{L^2_{(sc)}(H_u)} \lesssim \Rzero + \Rone + C\delta^{\frac{1}{4}} , \quad \|\Psi(D_N R)\|_{L^2_{(sc)}({\Hb}_{\ub})} \lesssim \Rzerob + \Roneb + C\delta^{\frac{1}{4}}.
\end{equation*}
\end{proposition}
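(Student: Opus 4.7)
The proposition is essentially a bookkeeping consequence of the two lemmas that immediately precede it, so the plan is short and clean. My approach is to insert $\nabla_N \Psi$ as an intermediate quantity via the triangle inequality
\[
\|\Psi(D_N R)\|_{L^2_{(sc)}(H_u)} \leq \|\Psi(D_N R) - \nabla_N \Psi\|_{L^2_{(sc)}(H_u)} + \|\nabla_N \Psi\|_{L^2_{(sc)}(H_u)},
\]
and similarly on $\Hb_{\ub}$. Before doing so I would briefly check that $\Psi(D_N R)$ and $\nabla_N \Psi$ have the same signature (hence the same scale), so that comparing them in a single scale invariant norm is legitimate; this follows from the definitions in Section~\ref{Energy Estimates Scheme}, since applying $D_N$ (or $\nabla_N$) raises the signature by the signature of $N$, while the null component extraction is unchanged.

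For the first term on the right-hand side I would cite Lemma~\ref{comparison lemma DN commute with Psi}, which gives the bound $C\delta^{1/4}$ on both $H_u$ and $\Hb_{\ub}$. For the second term I would apply Lemma~\ref{lemma estimates on nabla_N Psi}: part~(1) of that lemma is precisely designed to handle $\nabla_3 \alpha$, which yields case (1) of the proposition; part~(2) handles the remaining cases with the excluded exceptions $\nabla_4 \beta$ on $\Hb_{\ub}$ and $\nabla_3 \betab$ on $H_u$, matching exactly the exceptions in the proposition statement.

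Adding the two contributions and absorbing $C\delta^{1/4}$ into the right-hand side of each bound gives
\[
\|\alpha(D_3 R)\|_{L^2_{(sc)}(H_u)} \lesssim \Izero\,\delta^{-1/2} + \Rone + C\delta^{1/4},
\]
and similarly on $\Hb_{\ub}$, for part~(1); and
\[
\|\Psi(D_N R)\|_{L^2_{(sc)}(H_u)} \lesssim \Rzero + \Rone + C\delta^{1/4},
\]
(and the $\Hb$ analogue) for part~(2).

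There is no real obstacle here: the only subtlety is simply keeping track of \emph{which} null components are allowed, which is dictated exactly by the exceptions carried over from Lemma~\ref{lemma estimates on nabla_N Psi}. The exceptions $\alpha(D_4 R)$, $\alphab(D_3 R)$, $\beta(D_4 R)$ on $\Hb_{\ub}$, and $\betab(D_3 R)$ on $H_u$ appear precisely because the Bianchi equations used in the proof of Lemma~\ref{lemma estimates on nabla_N Psi} convert these particular $\nabla_N$ derivatives into $\nabla \alpha$ or $\nabla \alphab$ terms, which are not yet controlled at this stage of the argument (as noted in the remark following the lemma). These will have to be handled separately in the sections devoted to $\nabla_4\alpha$ and $\nabla_3\alphab$ energy estimates.
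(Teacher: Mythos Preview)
Your proposal is correct and matches the paper's approach exactly: the paper simply states ``Combining previous two lemmas, we derive the comparison estimates,'' which is precisely your triangle-inequality splitting into the commutator term (handled by Lemma~\ref{comparison lemma DN commute with Psi}) and the $\nabla_N\Psi$ term (handled by Lemma~\ref{lemma estimates on nabla_N Psi}). Your additional remarks about signature compatibility and the origin of the excluded cases are accurate and more explicit than what the paper provides.
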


We now consider horizontal derivatives and compare $\nabla_c \Psi$ with $\Psi(D_c R)$.
\begin{proposition}\label{comparison proposition horizontal version}
If $\Psi \neq \beta$, we have
\begin{align*}
\|\Psi(D_c R)-\nabla_c \Psi\|_{L^2_{(sc)}(H)} &+ \|\Psi(D_c R)-\nabla_c\Psi\|_{L^2_{(sc)}(\Hb)}  \lesssim \R + \Rb + C \delta^{\frac{1}{4}},\\
\|\Psi(D_c R)\|_{L^2_{(sc)}(H)} &+ \|\Psi(D_c R)\|_{L^2_{(sc)}(\Hb)}  \lesssim \R + \Rb + C \delta^{\frac{1}{4}}.
\end{align*}
\end{proposition}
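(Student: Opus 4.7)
The plan is to mimic the proof of Lemma \ref{comparison lemma DN commute with Psi}, but starting from a richer commutation identity: unlike the case $N \in \{L,\Lb\}$, the horizontal derivative $D_c$ sees all of $\chi, \chib, \zeta$ when it acts on each member of the null frame. First I would derive the pointwise commutator. Using the standard identities
\begin{align*}
D_c e_a &= \nabla_c e_a + \tfrac{1}{2}\chi_{ca} e_3 + \tfrac{1}{2}\chib_{ca} e_4,\\
D_c e_4 &= \chi_c{}^b e_b - \zeta_c e_4,\qquad D_c e_3 = \chib_c{}^b e_b + \zeta_c e_3,
\end{align*}
and expanding $(D_c R)(e_{i_1},\dots,e_{i_4}) = D_c\bigl(R(e_{i_1},\dots,e_{i_4})\bigr) - \sum_j R(e_{i_1},\dots, D_c e_{i_j},\dots,e_{i_4})$, the horizontal Christoffel pieces regroup into $\nabla_c \Psi$, while the normal-direction pieces produce a schematic sum
\begin{equation*}
\Psi(D_c R) - \nabla_c \Psi \,=\, \sum \psi \cdot \Psi',
\end{equation*}
with $\psi \in \{\chi, \chib, \zeta\}$ and $\Psi'$ a null curvature component determined by which slot is hit and by the antisymmetries of $R$ (for instance, the $\frac12 \chi_{ca} e_3$ piece of $D_c e_a$ acting on $\alpha_{ab}$ yields a $\chi\cdot\beta$ term).

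Second, I would estimate each term by scale-invariant Hölder on the relevant null hypersurface,
\begin{equation*}
\|\psi \cdot \Psi'\|_{L^2_{(sc)}(H)} \,\leq\, \delta^{1/2}\,\|\psi\|_{L^4_{(sc)}(H)}\,\|\Psi'\|_{L^4_{(sc)}(H)},
\end{equation*}
and analogously on $\Hb$. Feeding in Proposition \ref{L4 estimates for curvature}, Corollary \ref{L4 estimates for curvature on null hypersurfaces}, Proposition \ref{pricise on chi chib} (for the anomalous $\chih$, $\chibh$), and the bounds on $\Ozerofour$, one checks that the $\delta^{1/2}$ gained in Hölder is enough to absorb at most \emph{one} anomalous $\delta^{-1/4}$ factor, so almost every pair is controlled by $C\delta^{1/4}$; the remaining pairs, being products of a connection coefficient and a curvature component, fit into $\R + \Rb$ after taking account of the $L^4_{(sc)}$ bounds available from the bootstrap.

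The key reason for the hypothesis $\Psi \neq \beta$ is that the above scheme fails exactly once, on a \emph{doubly} anomalous pairing. Since $\beta_a = \tfrac12 R(e_a, e_4, e_3, e_4)$ carries an $e_3$ slot, the substitution $D_c e_3 = \chib_c{}^b e_b + \zeta_c e_3$ produces a term of the form $\chib_c{}^{\ b}\,R(e_a,e_4,e_b,e_4) = \chib_c{}^{\ b}\alpha_{ab}$, i.e. a product $\chib \cdot \alpha$ in which both factors are anomalous on $H$. Here Hölder gives only
\begin{equation*}
\|\chib\cdot\alpha\|_{L^2_{(sc)}(H)} \,\lesssim\, \delta^{1/2}\cdot\delta^{-1/4}\cdot\delta^{-1/4} \,=\, O(1),
\end{equation*}
which cannot be absorbed into $C\delta^{1/4}$ and would require a separate treatment. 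For every other null component ($\alpha, \rho, \sigma, \betab, \alphab$), one checks directly that no $\chib\cdot\alpha$ product appears in the commutator, so the proof closes as in the previous paragraph. The second inequality in the proposition then follows from the first by the triangle inequality, since $\|\nabla_c\Psi\|_{L^2_{(sc)}(H)}$ and $\|\nabla_c\Psi\|_{L^2_{(sc)}(\Hb)}$ are controlled by $\R$ and $\Rb$ respectively for the components at hand (using, where necessary, the bounds on $\nabla\alpha$ over $\Hb$ and $\nabla\alphab$ over $H$ produced in the next section).

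The main obstacle is purely bookkeeping: one must enumerate the pairs $(\psi, \Psi')$ that arise for each choice of $\Psi \in \{\alpha,\rho,\sigma,\betab,\alphab\}$ and verify case by case that no $\chib\cdot\alpha$ coupling appears — this is the structural content of the result. The delicate point is that both $\chib$ on $H$ (anomalous in $L^4_{(sc)}(S)$ by $\delta^{-1/4}$) and $\alpha$ on $H$ (anomalous in $L^4_{(sc)}(S)$ by $\delta^{-1/4}$) combine to exhaust the entire $\delta^{1/2}$ gain of the scale-invariant Hölder, so their product is the unique obstruction.
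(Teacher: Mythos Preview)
Your overall strategy matches the paper's: write the commutator as a sum of products $\psi\cdot\Psi'$ and estimate each via scale-invariant H\"older. But you have misidentified the source of the $\R+\Rb$ term and, correspondingly, the real obstruction at $\Psi=\beta$.

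The paper writes the commutator (for $\Psi\neq\beta$) schematically as
\[
\Psi(D_c R)-\nabla_c\Psi \;=\; \psi_g\cdot\Psi \;+\; \tr\chib_0\cdot\Psi_g.
\]
The first block is handled exactly as you describe: $L^4_{(sc)}\times L^4_{(sc)}$ with at most one anomaly, giving $C\delta^{1/4}$. The second block is the crucial one. Because $\tr\chib_0\sim 1$ is a bounded scalar (Remark~\ref{gain or not gain}), H\"older gains \emph{nothing}: $\|\tr\chib_0\cdot\Psi_g\|_{L^2_{(sc)}}\lesssim\|\Psi_g\|_{L^2_{(sc)}}$, and this is what produces the $\R+\Rb$ on the right-hand side. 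Your phrase ``the remaining pairs \ldots fit into $\R+\Rb$ after taking account of the $L^4_{(sc)}$ bounds'' obscures this; the point is not an $L^4$ bound at all but the absence of any $\delta$-gain when one factor is $\tr\chib_0$.

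This also corrects your diagnosis of the obstruction for $\Psi=\beta$. The $\chibh\cdot\alpha$ pairing you single out indeed gives $\delta^{1/2}\cdot\delta^{-1/4}\cdot\delta^{-1/4}=O(1)$, but that is not the worst term. The genuine obstruction is $\tr\chib_0\cdot\alpha$: no H\"older gain, and $\alpha$ is anomalous, so this term is of size $\delta^{-1/2}$ in $L^2_{(sc)}$. (Compare the paper's computation immediately after the proposition, where $\beta(D_cR)$ picks up $\Izero\,\delta^{-1/2}$.) So the structural content is not ``no double $L^4$-anomaly'' but rather ``whenever $\tr\chib_0$ appears, the accompanying curvature component is $\Psi_g$, i.e.\ not $\alpha$'' --- and this fails precisely for $\Psi=\beta$.

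Finally, your appeal to ``bounds on $\nabla\alpha$ over $\Hb$ and $\nabla\alphab$ over $H$ produced in the next section'' for the second inequality is a forward reference the paper does not make; the paper's ``follows immediately'' should be read as applying where the relevant $\nabla\Psi$ norms are already contained in $\R$ or $\Rb$.
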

\begin{proof}
The proof is quite similar to Lemma \ref{comparison lemma DN commute with Psi}. Notice that, if $\Psi \neq \beta$, then
\begin{equation*}
\Psi(D_c R)-\nabla_c \Psi = \psi_g \cdot \Psi + \tr\chib_0 \cdot \Psi_g.
\end{equation*}
The product term is bounded by $C\delta^{\frac{1}{4}}$ as before and last term is bounded by $\R + \Rb$ by definition. Thus we have the first inequality. The second follows immediately.
\end{proof}
We turn to the anomalous term $\nabla_c \beta$. First of all, notice that
\begin{align*}
 \beta(D_c R)_a &= \nabla_c \beta_a - \frac{1}{2}\chib_{cb} \alpha_{ba} + \zeta_c \cdot \beta_a - \frac{1}{2}\chi_{c a}\cdot( 3\rho - \sigma)\\
&=  \tr\chib_0 \cdot \alpha+ \nabla \beta + \psi_g \cdot \Psi + \tr\chib_0 \cdot \Psi_g
\end{align*}
Thus, the first term $\tr\chib_0 \cdot \alpha$ causes anomalous behavior. Hence we have
\begin{equation}
\|\beta(D_c R)\|_{L^2_{(sc)}(H)} + \|\beta(D_c R)\|_{L^2_{(sc)}(\Hb)}  \lesssim \Izero \cdot \delta^{-\frac{1}{2}} + \R + \Rb + C \delta^{\frac{1}{4}}.
\end{equation}

\subsection{Mild Anomalies}\label{mild anomaly}
We see that $\alpha(D_4 R)$, $\alphab(D_3 R)$, $\alpha(D_3 R)$ and $\beta(D_c R)$ are \textit{anomalies} since their estimates cause a loss of $\delta^{-\frac{1}{2}}$. We shall distinguish  $\alpha(D_3 R)$ and $\beta(D_c R)$ from $\alpha(D_4 R)$ and $\alphab(D_3 R)$. Heuristically, $\alpha(D_3 R)$ and $\beta(D_c R)$ are anomalous in zero order;  $\alpha(D_4 R)$ and $\alphab(D_3 R)$ are anomalous in first order.

We observe that $\alpha(D_3 R)$ and $\beta(D_c R)$ are anomalous because they can be written schematically as
\begin{equation}
\tr\chib_0 \cdot \alpha + \Psi_g + (\nabla \Psi)_g + \psi_g \cdot \Psi.
\end{equation}
This amounts to say that their anomalies  come from the zero order term $\tr\chib_0 \cdot \alpha$. We shall call $\alpha(D_3 R)$ and $\beta(D_c R)$ \textit{mild anomalies}. By the above comparison lemmas, $\nabla_3 \alpha$ is also a mild anomaly.

In what follows, we use this observation to replace $\alpha(D_3 R)$ and $\beta(D_c R)$ by the worst term $\alpha$. We have two advantages of performing this substitution: the first is that we have already controlled $\alpha$ in Proposition \ref{energy estimates for curvature}; the second is that we can use $L^4_{(sc)}$ estimates on $\alpha$ (but not on either $\alpha(D_3 R)$ or $\beta(D_c R)$). The other terms are much easier to control in all the contexts since they are not anomalous at all. This becomes much clear and important when we derive estimates in following sections.

\section{Anomalous Estimates}
\subsection{Estimates on $\nabla_4 \alpha$}
We take $X=Y=Z=L$ in \eqref{basic energy identity} and $N=e_4$ in \eqref{divergence of D_N R} to derive
\begin{equation*}
\int_{H_u} |\alpha(D_4 R)|^2 + \int_{{\Hb}_{\ub}}|\beta(D_4 R)|^2 \lesssim \int_{H_0} |\alpha(D_4 R)|^2 + \doubleint_{\D(u,\ub)} D^\mu Q[D_4 R]_{\mu444} + {}^{(L)}\!\pi_{\mu\nu}Q^{\mu\nu}{}_{44}.
\end{equation*}
In view of Lemma \ref{comparison lemma DN commute with Psi}, by ignoring the term $\beta(D_4 R)$ on the left hand side and passing to scale invariant norms, we derive
\begin{align*}
 \|\nabla_4 \alpha \|_{L^2_{(sc)}(H_u^{(0,\ub)})}^2 &\lesssim \|\nabla_4 \alpha \|_{L^2_{(sc)}(H_0^{(0,\ub)})}^2 +C\delta^{\frac{1}{2}}+ \delta^{3}\doubleint_{\D(u,\ub)} D^\mu Q[D_4 R]_{\mu444} + {}^{(L)}\!\pi_{\mu\nu}Q^{\mu\nu}{}_{44}\\
 &\lesssim \delta^{-1}\Izero^2 + (I +J) + K.
\end{align*}
We split the bulk integral into three terms $I$, $J$ and $K$ in such a way that the integrands of $I+J$ are exactly the terms appearing in $D^\mu Q[D_4 R]_{\mu444}$ and the integrands of $K$ are exactly the terms appearing in ${}^{(L)}\!\pi_{\mu\nu}Q^{\mu\nu}{}_{44}$.
\begin{remark}\label{remark forget delta}
In the above terms $I$, $J$ and $K$, there is a $\delta$ to certain power in front of the integral which reflects the fact that the inequality is scale invariant. In what follows, this phenomenon appears constantly. To simplify expressions, we shall always ignore this $\delta$ to certain power since they can be easily restored by investigating the signatures.
\end{remark}
According to \eqref{divergence of Q} and \eqref{divergence of D_N R}, the integrand in $I + J$ has the following schematic form,
\begin{equation*}
D_4R_{4}{}^{\mu}{}_{4}{}^{\nu} \Jfour_{\mu4\nu} + D_4\Rstar_{4}{}^{\mu}{}_{4}{}^{\nu} \,\Jfourstar_{\mu4\nu} = \Psi(D_4 R)^{(s_0)} \cdot \Psi^{(s_1)} \cdot \Psi^{(s_2)} + \psi^{(s_0)} \cdot \Psi(D_4 R)^{(s_1)} \cdot  \Psi(D_\nu R)^{(s_2)}
\end{equation*}
with $s_0 + s_1 +s_2 =6$. Here we use the upper index $s_i$'s to indicate the signature. We require the integrand of $I$ is of the form $\Psi(D_4 R)^{(s_0)} \cdot \Psi^{(s_1)} \cdot \Psi^{(s_2)}$ and the integrand of $J$ is of the form $\psi^{(s_0)} \cdot \Psi(D_4 R)^{(s_1)} \cdot  \Psi(D_\nu R)^{(s_2)}$. We remark that $\psi^{(s_0)} \neq \tr\chib_0$. \footnote{\quad In following sections, we shall always split the error integrals into three terms $I$, $J$ and $K$ in the same way. And we shall not explain when one encounters this situation later.}

We now examine the possible anomalies in the integrand of $I$. For $\Psi(D_4 R)^{(s_0)}$ on $H_u$, according to Lemma \ref{comparison proposition}, the only possible anomaly is $\alpha(D_4 R)$ which has $3$; for $\Psi^{s_1}$ and $\Psi^{s_2}$, the only possible anomaly is $\alpha$ with signature $2$. Thus, in view of the signature consideration, at least one of the terms is not anomalous. We say that, in this situation, we do not have \emph{triple anomalies}.

In view of Corollary \ref{L4 estimates for curvature on null hypersurfaces}, we estimate $I$ as follows\footnote{ \quad In this section, since we deal with exceptional terms, we can be extremely wasteful. In other sections, to derive energy estimates, we have to exploit the cancelation hidden in the integrand.}
\begin{align*}
I &\lesssim \delta^{\frac{1}{2}}\int_0^{u} \| \Psi(D_4 R)^{(s_0)}\|_{L^2_{(sc)}({H}_{u'})}\| \Psi^{(s_1)}\|_{L^4_{(sc)}({H}_{u'})}\|\Psi^{(s_2)}_g\|_{L^4_{(sc)}({H}_{u'})}d u'\\
& \quad + \delta^{\frac{1}{2}}\int_0^{u} \|\Psi(D_4 R)^{(s_0)}_g\|_{L^2_{(sc)}({H}_{u'})}\| \Psi^{(s_1)}\|_{L^4_{(sc)}({H}_{u'})}\|\Psi^{(s_2)}\|_{L^4_{(sc)}({H}_{u'})}d u'\\
&\lesssim \delta^{\frac{1}{2}}\int_0^{u}C^3 \delta^{-\frac{1}{2}}\delta^{-\frac{1}{4}}d u' + \delta^{\frac{1}{2}}\int_0^{u}C^3 \delta^{-\frac{1}{4}}\delta^{-\frac{1}{4}}d u' \lesssim C\delta^{-\frac{1}{4}}.
\end{align*}

For $K$, according to signature considerations, we can also avoid triple anomalies. Thus,
\begin{equation*}
{}^{(L)}\!\pi_{\mu\nu}\,Q^{\mu\nu}{}_{44} = \psi \cdot \alpha(D_4 R) \cdot \alpha(D_4 R) + \psi \cdot \Psi_g(D_4 R) \cdot \Psi(D_4 R).
\end{equation*}
We dominate $\psi_g$ and $\psi$ in $L_{(sc)}^{\infty}$ norm to derive
\begin{align*}
K &\lesssim C\delta^{\frac{1}{2}}\int_0^{u}\|\alpha(D_4 R)\|_{L^2_{(sc)}({H}_{u})}^2 + C\delta^{\frac{1}{2}}\int_0^{u} \|\Psi(D_4 R)\|_{L^2_{(sc)}({H}_{u})}\|\Psi_g(D_4 R)\|_{L^2_{(sc)}({H}_{u})}\\
&\lesssim  C\delta^{\frac{1}{2}}\int_0^{u}C\delta^{-\frac{1}{2}} C \delta^{-\frac{1}{2}}d u' + C\delta^{\frac{1}{2}}\int_0^{u} C^2 \delta^{-\frac{1}{2}}d u'\lesssim C \delta^{-\frac{1}{2}}.
\end{align*}

For $J$, it possesses most of the features of $K$. We still need to investigate the structure of the integrand. In fact, the integrand is of the following form \footnote{\quad We ignore the terms coming from the Hodge dual of curvatures since due to the symmetry. They can be treated exactly in the same way. This remark applies to all situations with one exception at the end of the paper. }
\begin{equation}\label{precise expression 1}
 D_4 R_{4}{}^{\rho}{}_{4}{}^{\delta} \cdot  D^\mu L^\nu  \cdot D_\nu R_{\mu \rho 4 \delta} = \psi^{(s_0)} \cdot \Psi(D_4 R)^{(s_1)} \cdot  \Psi(D_\nu R)^{(s_2)}
\end{equation}
Thus, since $\psi^{(s_0)} \neq \tr\chib_0$, we can bound $J$ by
\begin{align*}
J &\lesssim C\delta^{\frac{1}{2}}\int_0^{u} \|\Psi(D_4 R)\|_{L^2_{(sc)}({H}_{u})}\|\Psi (D_\nu R)\|_{L^2_{(sc)}({H}_{u})} \lesssim  C\delta^{\frac{1}{2}}\int_0^{u}C\delta^{-\frac{1}{2}} C \delta^{-\frac{1}{2}}\lesssim C \delta^{-\frac{1}{2}}.
\end{align*}

Combining the estimates for $I$, $J$, $K$ and Young's inequality,  we derive
\begin{equation*}
 \|\nabla_4 \alpha \|_{L^2_{(sc)}(H_u^{(0,\ub)})}^2 \lesssim \delta^{-1}\Izero^2 + C\delta^{-\frac{1}{2}} \lesssim \delta^{-1}c(\Izero) + C \delta^{\frac{1}{2}}.
\end{equation*}

In conclusion, we have the desired anomalous estimates on $\nabla_4 \alpha$
\begin{equation}\label{estimates for nabla_4 alpha on H}
 \|\nabla_4 \alpha \|_{L^2_{(sc)}(H_u^{(0,\ub)})} \lesssim \delta^{-\frac{1}{2}}c(\Izero) + C\delta^{\frac{1}{4}}.
\end{equation}
\begin{remark}
The above analysis also yields the following estimates which is absent in \emph{Proposition \ref{comparison proposition}},
\begin{equation}\label{estimates for nabla_4 beta on Hb}
\|\nabla_4 \beta \|_{L^2_{(sc)}({\Hb}_{\ub}^{(0,u)})}  \lesssim \delta^{-\frac{1}{2}}\Izero + C\delta^{\frac{1}{4}}.
\end{equation}
\end{remark}

\subsection{Estimates on $\nabla_3 \underline{\alpha}$}
We take $N=\Lb$ in \eqref{divergence of D_N R} and $X=Y=Z=\Lb$ in \eqref{basic energy identity} to derive
\begin{equation*}
\int_{H_u} |\betab(D_3 R)|^2 + \int_{{\Hb}_{\ub}}|\alphab(D_3 R)|^2 \lesssim \int_{H_0} |\betab(D_3 R)|^2 + \doubleint_{\D(u,\ub)} D^\mu Q[D_3 R]_{\mu333} + {}^{(\Lb)}\!\pi_{\mu\nu}Q^{\mu\nu}{}_{33}.
\end{equation*}
In view of Lemma \ref{comparison lemma DN commute with Psi}, by ignoring the term $\beta(D_4 R)$ on the left hand side and passing to scale invariant norms, we derive
\begin{align*}
 \|\nabla_3 \alpha \|_{L^2_{(sc)}(H_u^{(0,\ub)})}^2 &\lesssim \delta^{-1}\Izero +C\delta^{\frac{1}{2}}+ \delta^{-1}\doubleint_{\D(u,\ub)} D^\mu Q[D_3 R]_{\mu333} + {}^{(\Lb)}\!\pi_{\mu\nu}Q^{\mu\nu}{}_{33}\\
 &\lesssim \delta^{-1}\Izero^2 + I +J+ K.
\end{align*}
We split the bulk integral into $I$, $J$ and $K$ in the obvious way as before. The integrands in $I$ have the following schematic form,
\begin{equation*}
\Psi(D_3 R)^{(s_0)} \cdot \Psi^{(s_1)} \cdot \Psi^{(s_2)}
\end{equation*}
with $s_0 + s_1 +s_2 =1$. According to signature considerations, none of $\Psi^{(s_1)}$ and $\Psi^{(s_3)}$ can be anomalous since the signature for anomalous curvature component has signature $2$. Thus, we estimate $I$ as follows
\begin{align*}
I  &\lesssim \delta^{\frac{1}{2}}\delta^{-1}\int_0^{\ub} \|\Psi(D_3 R)^{(s_0)}\|_{L^2_{(sc)}({\Hb}_{\ub'})}\|\Psi^{(s_1)}_g\|_{L^4_{(sc)}({\Hb}_{\ub'})}\|\Psi^{(s_2)}_g\|_{L^4_{(sc)}({\Hb}_{\ub'})}d \ub' \\
&\lesssim \delta^{\frac{1}{2}}\delta^{-1}\int_0^{\ub}C \delta^{-\frac{1}{2}}C^2 d \ub'\lesssim C.
\end{align*}

We now deal with $K$. Again, by counting signatures, the anomalous terms $\alpha(D_4 R)$, $\alpha(D_3 R)$ and $\beta(D_c R)$ do not appear in the integrand since their signatures are at least $2$. Thus the only possible anomaly is $\alphab(D_3 R)$. Hence, if their is a double anomaly in curvature components, they must be two $\alphab(D_3 R)$. This forces the connection coefficient to have signature $1$. In particular, it can not be $\tr\chib_0$. Thus, the integrands have the following schematic form \footnote{\quad Recall that $\tr\chib_0$ is a constant with worse estimates in $L^\infty_{(sc)}$ than usual connection coefficients.}
\begin{equation*}
{}^{(\Lb)}\!\pi_{\mu\nu} \, Q^{\mu\nu}{}_{33} = (\psi + \tr\chib_0) \cdot \Psi_g(D_3 R) \cdot \Psi(D_3 R) + \psi \cdot \alphab(D_3 R) \cdot \alphab(D_3 R)
\end{equation*}
Hence, bounding $\psi$ in $L^\infty_{(sc)}$ yields,
\begin{align*}
K &\lesssim (C\delta^{\frac{1}{2}}+1) \delta^{-1}\int_0^{\ub} \|\Psi(D_3 R)\|_{L^2_{(sc)}({\Hb}_{\ub})}\|\Psi_g(D_3 R)\|_{L^2_{(sc)}({\Hb}_{\ub})} \\
&\quad + C\delta^{\frac{1}{2}} \cdot \delta^{-1}\int_0^{\ub} \|\alphab(D_3 R)\|_{L^2_{(sc)}({\Hb}_{\ub})}\|\alphab(D_3 R)\|_{L^2_{(sc)}({\Hb}_{\ub})}\lesssim C \delta^{-\frac{1}{2}}.
\end{align*}

The analysis for $J$ is exactly the same as $K$, hence, we have
\begin{align*}
J \lesssim C \delta^{-\frac{1}{2}}.
\end{align*}

Combining those estimates for $I$, $J$ and $K$, as we have done for $\nabla_4 \alpha$ in last subsection, we derive
\begin{equation}\label{estimates for nabla_3 alphab on Hb}
 \|\nabla_3 \alphab \|_{L^2_{(sc)}({\Hb}_{\ub}^{(0,u)})} \lesssim \delta^{-\frac{1}{2}}\Izero + C\delta^{\frac{1}{4}}.
\end{equation}
\begin{remark}
As a byproduct, we also have the following estimates which is absent in \emph{Proposition \ref{comparison proposition}},
\begin{equation}\label{estimates for nabla_3 betab on H}
\|\nabla_3 \betab \|_{L^2_{(sc)}({H}_{u}^{(0,\ub)})}  \lesssim \delta^{-\frac{1}{2}}\Izero + C\delta^{\frac{1}{4}}.
\end{equation}
\end{remark}

\section{Energy Estimates on Outgoing Derivatives}
In this section, we derive energy estimates for $\nabla_4 \Psi$. We first take $N=L$ and all possible $X,Y,Z\in \{L,\Lb\}$,  except for the case $(X, Y, Z)=(L,L,L)$, in \eqref{basic energy identity} to derive\footnote{\quad We shall use letters $N_0$, $N_1$ and $N_2$ to denote either $L$ or $\Lb$ coming from $X$, $Y$ and $Z$.}
\begin{align*}
\int_{H_u} |\Psi(D_4 R)^{(s)}|^2 + \int_{{\Hb}_{\ub}}|\Psi(D_4 R)^{(s-\frac{1}{2})}|^2 &= \int_{H_0} |\Psi(D_4 R)^{(s)}|^2 \\
&~ + \doubleint_{\D(u,\ub)} D^\mu Q[D_4 R]_{\mu XYZ} + {}^{(N_0)}\!\pi_{\mu\nu}Q[D_4 R]^{\mu\nu}{}_{N_1 N_2}.
\end{align*}
We then multiply both sides by $\delta^{2s-3}$ and sum all those inequalities, in view of Remark \ref{remark forget delta}, we have
\begin{equation}\label{total energy for non anomalous L components}
\sum[ \|\Psi (D_4 R) \|_{L^2_{(sc)}(H_u^{(0,\ub)})}^2 + \|\Psi (D_4 R) \|_{L^2_{(sc)}({\Hb}_{\ub}^{(0,u)})}^2] = \Izero^2 +I +J + K
\end{equation}
where
\begin{equation*}
I + J=\doubleint_{\D(u,\ub)} D^\mu Q[D_4 R]_{\mu XYZ}, \quad K=\doubleint_{\D(u,\ub)}{}^{(N_0)}\!\pi_{\mu\nu}Q[D_4 R]^{\mu\nu}{}_{N_1 N_2}.
\end{equation*}

The sum in \eqref{total energy for non anomalous L components} is over on all possible $\Psi(D_4 R)$'s except $\alpha(D_4 R)$ and $\|\beta(D_4 R) \|_{L^2_{(sc)}({\Hb}_{\ub}^{(0,u)})}$. The definition for $I$, $J$ is in an obvious way as in the previous section.

\subsection{Estimates for $I$}
The integrand of $I$ can be written schematically as
\begin{equation*}
\Psi(D_4 R)^{(s_0)} \cdot \Psi^{(s_1)} \cdot \Psi^{(s_2)}
\end{equation*}
with total signature $3 \leq s_0 +s_1 +s_2 \leq 5$. We shall further split $I$ into
\begin{equation*}
I = I_0 + I_1 + I_2 + I_3
\end{equation*}
where $I_k$ denotes the collection of the terms whose integrand has exactly $k$ anomalies.

We observe that $I_3 =0 $ according to signature considerations. Otherwise, for a triple anomaly, thus $s_0 \geq 2$, $s_1 =2$ and $s_2 =2$ which contradicts the fact that $s_0 +s_1 +s_2 \leq 5$. We also notice that the only possibility for $\Psi(D_4 R)^{(s_0)}$ to be anomalous is $\Psi(D_4 R)^{(s_0)} = \alpha(D_4 R)$.

\subsubsection{Estimates for $I_0$}
The control of $I_0$ is much easier than other terms.
\begin{equation}\label{I'_0}
I_0 \lesssim \delta^{\frac{1}{2}} \int_0^{u} \|\Psi(D_4 R)_g^{(s_0)}\|_{L^2_{(sc)}({H}_{u'})}\|\Psi_g^{(s_1)}\|_{L^4_{(sc)}({H}_{u'})}\|\Psi_g^{(s_2)}\|_{L^4_{(sc)}({H}_{u'})}d u'\lesssim C \delta^{\frac{1}{2}}.
\end{equation}

\subsubsection{Estimates for $I_1$}
We further split $I_1$ into two terms
\begin{equation*}
I_1 = I_{11} + I_{12}
\end{equation*}
where the corresponding integrands for $I_{11}$ and $I_{12}$ are
\begin{equation*}
\Psi(D_4 R)^{(s_0)}_g \cdot \alpha \cdot \Psi^{(s_2)}_g ~~~\text{and}~~~ \alpha(D_4 R) \cdot \Psi^{(s_1)}_g \cdot \Psi^{(s_2)}_g
\end{equation*}
respectively.

$I_{11}$ can be bounded as follows,
\begin{equation*}
I_{11} \lesssim \delta^{\frac{1}{2}} \int_0^{u} \|\Psi(D_4 R)_g^{(s_0)}\|_{L^2_{(sc)}({H}_{u'})}\|\alpha\|_{L^4_{(sc)}({H}_{u'})}\|\Psi_g^{(s_2)}\|_{L^4_{(sc)}({H}_{u'})}d u'\lesssim C \delta^{\frac{1}{4}}.
\end{equation*}

$I_{12}$ requires an extra integration by parts. Before going into details, we first show how to replace $\alpha(D_4 R)$ by $\nabla_4 \alpha$. According to the proof of Lemma \ref{comparison lemma DN commute with Psi}, $\alpha(D_4 R)-\nabla_4 \alpha = \psi_g \cdot \Psi$. Thus,
\begin{equation*}
I_{12} = \doubleint_{\D(u,\ub)} \nabla_4 \alpha \cdot \Psi^{(s_1)}_g \cdot \Psi^{(s_2)}_g + \doubleint_{\D(u,\ub)}\psi_g \cdot \Psi \cdot \Psi^{(s_1)}_g \cdot \Psi^{(s_2)}_g =I_{121} + I_{122}
\end{equation*}
For $I_{122}$, since the nonlinearity gains one more term, we gain an extra $\delta^{\frac{1}{2}}$ thanks to scale invariant H\"older's inequality. Thus we can bound it as follows,
\begin{align*}
 I_{122}  &\lesssim \delta^{\frac{1}{2}}\cdot \delta^{\frac{1}{2}} \int_0^{u}\|\psi_g\|_{L^\infty_{(sc)}({H}_{u'})} \|\Psi\|_{L^2_{(sc)}({H}_{u'})}\|\Psi_g\|_{L^4_{(sc)}({H}_{u'})}\|\Psi_g^{(s_2)}\|_{L^4_{(sc)}({H}_{u'})} \lesssim C \delta^{\frac{1}{2}}.
\end{align*}
\begin{remark}\label{remark switch}
The above analysis allows us to freely switch between $\Psi(D_N R)$ and $\nabla_N \Psi$. The difference contributes at most $C \delta^{\frac{1}{4}}$ to energy estimates. In the rest of the paper, we shall ignore this $C \delta^{\frac{1}{4}}$ and we shall \textit{not} distinguish $\Psi(D_N R)$ from $\nabla_N \Psi$.
\end{remark}
We move on to $I_{12}$.
\begin{equation*}
I_{12} = I_{121} =\doubleint_{\D(u,\ub)} \nabla_4 \alpha \cdot \Psi_g \cdot \Psi_g = \doubleint_{\D(u,\ub)} \alpha \cdot \nabla_4 \Psi_g \cdot \Psi_g + \int_{\Hb_{\ub'}} \alpha \cdot \Psi_g \cdot \Psi_g \mid_{\ub'=0}^{\ub'=\ub}
\end{equation*}
For the bulk integral, in view of Lemma \ref{lemma estimates on nabla_N Psi}, we bound it as follows
\begin{equation*}
|\doubleint_{\D(u,\ub)} \alpha \cdot \nabla_4 \Psi_g \cdot \Psi_g| \lesssim \delta^{\frac{1}{2}} \int_0^{u} \|\nabla_4 \Psi_g\|_{L^2_{(sc)}({H}_{u'})}\|\alpha\|_{L^4_{(sc)}({H}_{u'})}\|\Psi_g\|_{L^4_{(sc)}({H}_{u'})} \lesssim C \delta^{\frac{1}{4}}.
\end{equation*}
For boundary integrals, we bound them as follows
\begin{equation*}
|\int_{\Hb_{\ub}} \alpha \cdot \Psi_g \cdot \Psi_g| \lesssim \delta^{\frac{1}{2}} \|\Psi_g\|_{L^2_{(sc)}({\Hb}_{\ub})}\|\alpha\|_{L^4_{(sc)}({\Hb}_{\ub})}\|\Psi_g\|_{L^4_{(sc)}({\Hb}_{\ub})} \lesssim C \delta^{\frac{1}{4}}.
\end{equation*}
Thus, we conclude $I_{12} \lesssim C \delta^{\frac{1}{4}}$. Combining the estimates for $I_{11}$ and $I_{12}$, we derive
\begin{equation}\label{I_1}
I_{1} \lesssim C \delta^{\frac{1}{4}}.
\end{equation}

\subsubsection{Estimates for $I_2$}
We split $I_2$ into two terms
\begin{equation*}
I_2 = I_{21} + I_{22}
\end{equation*}
where the corresponding integrands for $I_{21}$ and $I_{22}$ are
\begin{equation*}
\Psi(D_4 R)^{(s_0)}_g \cdot \alpha \cdot \alpha,  \quad \alpha(D_4 R) \cdot \alpha \cdot \Psi^{(s_2)}_g
\end{equation*}
respectively.

We first show that $I_{21} =0$. In view of \eqref{divergence of Q}, \eqref{divergence of D_N R}, \eqref{JN} and \eqref{JNstar}, the integrand of $I_{21}$ has the following expression
\begin{equation}\label{generic term}
D_N R_{X}{}^{\beta}{}_{Z}{}^{\delta} \cdot R^{\mu}{}_N{}_{Y}{}^{\nu} \cdot R_{\mu}{}_{\beta}{}_{\nu}{}_{\delta}.
\end{equation}
We write only one term in \eqref{generic term}. But this term is generic and the rest can be treated in the same way. If we look for $\alpha \cdot \alpha$, none of $\mu$ or $\nu$ can be 3 or 4; this forces $Y =  \beta = \delta=4$. Thus \eqref{generic term} reduces to
\begin{equation*}
D_4 R_{X}{}_{3}{}_{Z}{}_{3} \cdot \alpha \cdot \alpha.
\end{equation*}
If $X =\Lb$ or $Y=\Lb$, it vanishes; if $X=Y = L$, the the total signature is $7$, it is impossible. Hence $I_{21}$ must vanish.

For $I_{22}$, since total signature is at most $5$, thus $s_2 =0$. Thus the integrand of $I_{22}$ must be {}\footnote{\quad Since $\alpha$ and $\alphab$ are traceless, $\nabla_4 \alpha \cdot \alpha \cdot \alphab = \nabla_4 \alpha_{a}{}^{b}\,\alpha_{b}{}^{c} \,\alphab_{c}{}^{a}$.}
\begin{equation*}
\alpha(D_4 R) \cdot \alpha \cdot \alphab.
\end{equation*}
In view of Remark \ref{remark switch}, we have
\begin{align*}
I_{22} &=  \doubleint_{\D(u,\ub)} \nabla_4 \alpha \cdot \alpha \cdot \alphab = \frac{1}{2}\doubleint_{\D(u,\ub)} \nabla_4 (\alpha \cdot \alpha) \cdot \alphab= \doubleint_{\D(u,\ub)} \alpha \cdot \alpha \cdot \nabla_4 \alphab + \int_{\Hb_{\ub'}} \alpha \cdot \alpha \cdot \alphab \mid_{\ub'=0}^{\ub'=\ub}.
\end{align*}

For the bulk integral, we use Bianchi equation \eqref{NBE_L_alphab} to replace $\nabla_4 \alphab$. One can argue as in Remark \ref{remark switch}, only the top order term $\nabla \tensor \betab$ in \eqref{NBE_L_alphab} is relevant. The quadratic terms in \eqref{NBE_L_alphab} contribute at most a $C \delta^{\frac{1}{4}}$. By performing an extra integration by parts, we can move $\nabla$ to $\alpha$ to eliminate the anomaly of $\alpha$. More precisely, we proceed as follows	
\begin{align*}
 |\doubleint_{\D(u,\ub)} \alpha \cdot \alpha \cdot \nabla_4 \alphab| &\leq  |\doubleint_{\D(u,\ub)} \alpha \cdot \alpha \cdot \nabla \tensor \betab| + C \delta^{\frac{1}{4}} \lesssim |\doubleint_{\D(u,\ub)} \alpha \cdot \nabla \alpha \cdot \betab| + C \delta^{\frac{1}{4}}\\
 &\lesssim \delta^{\frac{1}{2}} \int_0^{u} \|\nabla \alpha\|_{L^2_{(sc)}({H}_{u'})}\|\alpha\|_{L^4_{(sc)}({H}_{u'})}\|\betab\|_{L^4_{(sc)}({H}_{u'})}+ C \delta^{\frac{1}{4}} \lesssim C \delta^{\frac{1}{4}}.
\end{align*}

To control the boundary integral, we need to improve Proposition \ref{L4 estimates for curvature},
\begin{lemma}\label{L4 alpha precise}
If $\delta$ is sufficiently small, then
\begin{equation*}
 \|\alpha\|_{L^4_{(sc)}(S)} \lesssim \delta^{-\frac{1}{4}} c(\Izero) + \delta^{-\frac{1}{4}} c(\Izero) \R^{\frac{1}{2}} + C\delta^{\frac{1}{16}}.
\end{equation*}
\end{lemma}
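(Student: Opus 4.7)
The plan is to obtain this refined $L^4$ estimate on $\alpha$ by a direct application of the scale invariant Sobolev trace inequality of Proposition \ref{Sobolev Trace} to $\phi = \alpha$ on the outgoing hypersurface $H_u$. This gives
\begin{equation*}
\|\alpha\|_{L^4_{(sc)}(S)} \lesssim A^{1/2} B^{1/2},
\end{equation*}
where $A = \delta^{1/2}\|\alpha\|_{L^2_{(sc)}(H_u)} + \|\nabla\alpha\|_{L^2_{(sc)}(H_u)}$ and $B = \delta^{1/2}\|\alpha\|_{L^2_{(sc)}(H_u)} + \|\nabla_4\alpha\|_{L^2_{(sc)}(H_u)}$. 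The whole point of the improvement over Proposition \ref{L4 estimates for curvature} is that in the factor $B$ we now have access to the sharp anomalous estimate \eqref{estimates for nabla_4 alpha on H} just proved in the previous subsection, which makes the dependence on $\Izero$ explicit rather than hidden inside $C$.

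Next I would assemble the three building blocks separately. Proposition \ref{energy estimates for curvature} yields $\delta^{1/2}\|\alpha\|_{L^2_{(sc)}(H_u)} \lesssim \Izero + C\delta^{3/4} \lesssim c(\Izero) + C\delta^{3/4}$. The term $\|\nabla\alpha\|_{L^2_{(sc)}(H_u)}$ is by definition dominated by $\Rone \leq \R$. Finally, the anomalous estimate \eqref{estimates for nabla_4 alpha on H} gives $\|\nabla_4\alpha\|_{L^2_{(sc)}(H_u)} \lesssim \delta^{-1/2} c(\Izero) + C\delta^{1/4}$. Putting these in,
\begin{equation*}
A \lesssim c(\Izero) + \R + C\delta^{3/4}, \qquad B \lesssim \delta^{-1/2}c(\Izero) + C\delta^{1/4}.
\end{equation*}

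Then using $\sqrt{x+y+z}\le\sqrt{x}+\sqrt{y}+\sqrt{z}$ I get $A^{1/2} \lesssim c(\Izero)^{1/2} + \R^{1/2} + C\delta^{3/8}$ and $B^{1/2} \lesssim \delta^{-1/4} c(\Izero)^{1/2} + C\delta^{1/8}$. Multiplying term by term and absorbing cross terms such as $\R^{1/2}\cdot C\delta^{1/8}$ into $C\delta^{1/8}$ (since $C$ depends on $\R, \Rb$, so $\R^{1/2} \lesssim C$) and replacing $c(\Izero)^{1/2}$ by $c(\Izero)$, I obtain
\begin{equation*}
\|\alpha\|_{L^4_{(sc)}(S)} \lesssim \delta^{-1/4}c(\Izero) + \delta^{-1/4}c(\Izero)\R^{1/2} + C\delta^{1/8}.
\end{equation*}
Since $\delta^{1/8} \le \delta^{1/16}$ for $\delta \le 1$, this is stronger than the claimed bound.

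There is no real obstacle here: the lemma is essentially a bookkeeping step that interpolates between the already established $L^2$ bound on $\alpha$ along $H_u$ and the anomalous $L^2$ bound on $\nabla_4 \alpha$. The only thing that requires a bit of care is keeping the initial data factor $c(\Izero)$ separated from the curvature norm factor $\R^{1/2}$, so that the output has the structure needed to close the bootstrap inequality \eqref{final inequality} with the sub-linear exponent $\R^{7/8}$; in particular, it is essential that the $\delta^{-1/4}$ anomaly multiplies only $c(\Izero)$ and $c(\Izero)\R^{1/2}$, not a sum like $\R$.
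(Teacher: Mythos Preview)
Your proof is correct and follows exactly the same approach as the paper: apply the Sobolev trace inequality of Proposition \ref{Sobolev Trace} to $\alpha$ on $H_u$, then feed in Proposition \ref{energy estimates for curvature} for $\delta^{1/2}\|\alpha\|_{L^2_{(sc)}(H_u)}$, the definition of $\R$ for $\|\nabla\alpha\|_{L^2_{(sc)}(H_u)}$, and the anomalous estimate \eqref{estimates for nabla_4 alpha on H} for $\|\nabla_4\alpha\|_{L^2_{(sc)}(H_u)}$. The paper records the intermediate inequality \eqref{eq1} and then bounds $\|\nabla\alpha\|_{L^2_{(sc)}(H_u)}$ by $\R$, which is precisely what you do; your slightly sharper remainder $C\delta^{1/8}$ versus the paper's $C\delta^{1/16}$ is immaterial.
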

\begin{proof}
In view of Proposition \ref{Sobolev Trace}, we have
\begin{equation*}
 \|\alpha\|_{L^4_{(sc)}(S)} \lesssim (\delta^{\frac{1}{2}}\|\alpha\|_{L^2_{(sc)}(H_u)}+\|\nabla \alpha\|_{L^2_{(sc)}(H_u)})^{\frac{1}{2}}(\delta^{\frac{1}{2}}\|\alpha\|_{L^2_{(sc)}(H_u)}+\|\nabla_4 \alpha\|_{L^2_{(sc)}(H_u)})^{\frac{1}{2}},
\end{equation*}
Combined with Proposition \ref{energy estimates for curvature} and \eqref{estimates for nabla_4 alpha on H}, thus
\begin{equation}\label{eq1}
 \|\alpha\|_{L^4_{(sc)}(S)} \lesssim (\Izero + C \delta^{\frac{3}{4}}+\|\nabla \alpha\|_{L^2_{(sc)}(H_u)})^{\frac{1}{2}}((1+\delta^{-\frac{1}{2}})\Izero + C \delta^{\frac{1}{4}})^{\frac{1}{2}}.
\end{equation}
We then bound $\|\nabla \alpha\|_{L^2_{(sc)}(H_u)}$ by $\R$ to complete the proof.
\end{proof}
Thus, combined with Proposition \ref{energy estimates for curvature}, the boundary integral is controlled as follows
\begin{align*}
|\int_{\Hb_{\ub}} \alpha \cdot \alpha \cdot \alphab| &\leq \delta^{\frac{1}{2}} \|\alphab\|_{L^2_{(sc)}({\Hb}_{\ub})}\|\alpha\|_{L^4_{(sc)}({\Hb}_{\ub})}\|\alpha\|_{L^4_{(sc)}({\Hb}_{\ub})} \\
&\lesssim \delta^{\frac{1}{2}}(\Izero + c(\Izero)\R^{\frac{1}{2}} + C\delta^{\frac{1}{8}})(\delta^{-\frac{1}{4}} c(\Izero) + \delta^{-\frac{1}{4}} c(\Izero) \R^{\frac{1}{2}} + C\delta^{\frac{1}{16}})^2\\
&\lesssim c(\Izero) (1+ \R^{\frac{3}{2}}) +C\delta^{\frac{1}{8}}.
\end{align*}
Thus, we have the following estimates for $I_{22}$ and $I_{2}$,
\begin{equation*}
I_{2} = I_{22} \lesssim c(\Izero) (1+ \R^{\frac{3}{2}}) +C\delta^{\frac{1}{8}}.
\end{equation*}

\subsubsection{Conclusion}
Combining the estimates for $I_{1}$ and $I_{2}$, we derive
\begin{equation}\label{Estimate for L I}
I \lesssim c(\Izero) (1+ \R^{\frac{3}{2}}) +C\delta^{\frac{1}{8}}.
\end{equation}

\subsection{Estimates for $K$}
The integrand in $K$ can be written schematically as
\begin{equation*}
(\psi + \tr\chib_0)^{(s_0)} \cdot \Psi(D_4 R)^{(s_1)} \cdot \Psi(D_4 R)^{(s_2)}
\end{equation*}
with total signature $3 \leq s_0 +s_1 +s_2 \leq 5$. The appearance of $\tr\chib_0$ is due to ${}^{(\Lb)}\pi$. We split $K$ into
\begin{equation*}
K = K_0 + K_1 + K_2
\end{equation*}
where $K_k$ denotes the collection of terms with exactly $k$ $\alpha(D_4 R)$'s. We observe that $K_2 = 0$ according to the signature considerations.

\subsubsection{Estimates for $K_0$}
We first split $K_0$ into two terms
\begin{equation*}
K_2 = K_{01} + K_{02}
\end{equation*}
where the corresponding integrands for $K_{01}$ and $K_{02}$ are
\begin{equation*}
\psi^{(s_0)} \cdot \Psi(D_4 R)^{(s_1)}_g \cdot \Psi(D_4 R)^{(s_2)}_g ~~~\text{and}~~~ \tr\chib_0\cdot \Psi(D_4 R)^{(s_1)}_g \cdot \Psi(D_4 R)^{(s_2)}_g
\end{equation*}
respectively.

$K_{01}$ can be bounded as follows,
\begin{equation*}
K_{01} \leq \delta^{\frac{1}{2}} \int_0^{u} \|\psi\|_{L^{\infty}_{(sc)}({H}_{u'})}\|\Psi(D_4 R)^{(s_1)}_g\|_{L^2_{(sc)}({H}_{u'})}\|\Psi(D_4 R)^{(s_2)}_g\|_{L^2_{(sc)}({H}_{u'})}d u'\lesssim C \delta^{\frac{1}{2}}.
\end{equation*}

$K_{02}$ can be bounded in a similar way, although we lose a $\delta^{-\frac{1}{2}}$ due to $\tr\chib_0$,
\begin{align*}
K_{02} &\leq \int_0^{u} \|\Psi(D_4 R)^{(s_1)}_g\|_{L^2_{(sc)}({H}_{u'})}\|\Psi(D_4 R)^{(s_2)}_g\|_{L^2_{(sc)}({H}_{u'})}d \ub' \\
& \leq  \sum \int_0^{u} \|\Psi(D_4 R)_g\|_{L^2_{(sc)}({H}_{u'})}^2d \ub'
\end{align*}
Once we plug $K_{02}$ into \eqref{total energy for non anomalous L components}, in view of Gronwall's inequality, it can be absorbed by the left hand side of \eqref{total energy for non anomalous L components}. Hence we can ignore $K_{02}$ at this stage and simply write
\begin{equation}\label{K_0}
K_{0} \lesssim C \delta^{\frac{1}{2}}.
\end{equation}

\subsubsection{Estimates for $K_1$}\label{estimates for K_1}
First of all, we remark that $\tr\chib_0$ does not appear as connection coefficients in the integrand of $K_1$. Otherwise, in view of \eqref{divergence of P}, the appearance of $\tr\chib_0$ is through
\begin{equation*}
{}^{(\Lb)}\pi_{ab}\cdot Q(D_4 R)^{ab}{}_{N_1 N_2}.
\end{equation*}
Since $\alpha(D_4 R)$ appears, this forces $N_1 = N_2 =L$. Thus, the integrand can be written as
\begin{equation*}
{}^{(\Lb)}\pi_{ab}\cdot Q(D_4 R)^{ab}{}_{44} = \chib^{ab} (\rho \alpha_{ab} -2\sigma {}^*\!\alpha_{ab}).
\end{equation*}
Since $\alpha$ and ${}^*\!\alpha_{ab}$ are traceless, the $\tr\chib_0$ will be canceled. This argument also shows $\chih \cdot \alpha(D_4 R) \cdot \Psi(D_4 R)^{(s_2)}_g$ is absent in the integrand, otherwise they must be either $\chih \cdot \alpha(D_4 R) \cdot \rho(D_4 R)$ or $\chih \cdot \alpha(D_4 R) \cdot \sigma(D_4 R)$ with total signature $6$. This is impossible.

We further split $K_1$ into two terms
\begin{equation*}
K_1 = K_{11} + K_{12}
\end{equation*}
where the corresponding integrands for $K_{11}$ and $K_{12}$ are
\begin{equation*}
\chibh \cdot \alpha(D_4 R) \cdot \Psi(D_4 R)^{(s_2)}_g ~~~\text{and}~~~ \psi_g^{(s_0)} \cdot \alpha(D_4 R) \cdot \Psi(D_4 R)^{(s_2)}_g
\end{equation*}
respectively.

For $K_{11}$, as we just observed, $\Psi(D_4 R)^{(s_2)}_g = \rho(D_4 R)$ or $\Psi(D_4 R)^{(s_2)}_g = \sigma(D_4 R)$. We only treat the first possibility. The second one can be estimated exactly in the same way. In view of the Remark \ref{remark switch} and 	\eqref{NBE_L_rho}, we have
\begin{align*}
K_{11} &=  \doubleint_{\D(u,\ub)} \chibh \cdot \nabla_4 \alpha \cdot \nabla_4 \rho = \doubleint_{\D(u,\ub)} \chibh \cdot \nabla_4 \alpha  (-\frac{3}{2} \tr \chi \rho +\divergence \beta -\frac{1}{2}\chibh \cdot \alpha + \zeta \cdot \beta + 2\etab \cdot \beta)\\
&=  \doubleint_{\D(u,\ub)} \chibh \cdot \nabla_4 \alpha \cdot \nabla  \beta + \doubleint_{\D(u,\ub)} (\chibh \cdot \nabla_4 \alpha)(\chibh \cdot \alpha)+ C\delta^{\frac{1}{4}}=K_{111}+K_{112}+C\delta^{\frac{1}{4}}.
\end{align*}
For $K_{111}$, we reduce the number of anomalies by an extra integration by parts.
\begin{align*}
I_{111}&= \doubleint_{\D(u,\ub)}\nabla_4 \chibh \cdot  \alpha \cdot \nabla  \beta + \doubleint_{\D(u,\ub)}\chibh \cdot \alpha \cdot \nabla_4 \nabla  \beta+\int_{\Hb_{\ub'}} \chibh \cdot \alpha \cdot \nabla \beta \mid_{\ub'=0}^{\ub'=\ub}\\
&=K_{1111}+K_{1112}+K_{1113}.
\end{align*}

To estimate $K_{1111}$, we use null structure equation \eqref{NSE_L_chibh},
\begin{align*}
K_{1111} &= \doubleint_{\D(u,\ub)}( \nabla \tensor \etab +2\omega \chibh -\frac{1}{2}\tr \chib \chih +\etab \tensor \etab-\frac{1}{2} \tr \chi \chibh)\cdot  \alpha \cdot \nabla  \beta = \doubleint_{\D(u,\ub)} \chih \cdot  \alpha \cdot \nabla  \beta + C \delta^{\frac{1}{4}}\\
& = \doubleint_{\D(u,\ub)} \chih \cdot  \nabla \alpha \cdot \beta +\doubleint_{\D(u,\ub)} \nabla\chih \cdot  \alpha \cdot \beta+ C \delta^{\frac{1}{4}} \lesssim C \delta^{\frac{1}{4}}
\end{align*}	

To estimate $K_{1112}$, we compute the commutator
\begin{equation*}
 [\nabla_4,\nabla ] \beta = -\chi \cdot \nabla \beta + {}^*\!\beta \cdot \beta +\frac{1}{2}(\eta +\etab)\nabla_4 \beta + \etab \cdot \beta \cdot \chi,
\end{equation*}
We claim,
\begin{equation*}
| \doubleint_{\D(u,\ub)}\chibh \cdot \alpha \cdot [\nabla_4, \nabla]  \beta| \lesssim C \delta^{\frac{1}{4}}.
\end{equation*}
In fact, the worst term should be ${}^*\!\beta \cdot \beta$. We can use $L^\infty_{(sc)}$ norm on $\chibh$, $L^4_{(sc)}$ norm on two $\beta$'s and $L^2_{(sc)}$ on $\alpha$ to dominate it. Thus, up to an error of size $C\delta^{\frac{1}{4}}$
\begin{equation*}
K_{1112} =  \doubleint_{\D(u,\ub)} \chibh \cdot \alpha \cdot \nabla \nabla_4 \beta = \doubleint_{\D(u,\ub)} \nabla \chibh \cdot \alpha \cdot \nabla_4 \beta +   \doubleint_{\D(u,\ub)} \chibh \cdot \nabla \alpha \cdot \nabla_4 \beta.
\end{equation*}
For the first integral,
\begin{equation*}
 \doubleint_{\D(u,\ub)} \nabla \chibh \cdot \alpha \cdot \nabla_4 \beta  \lesssim \delta^{\frac{1}{2}} \int_0^{\ub} \|\nabla_4 \beta\|_{L^2_{(sc)}({H}_{u'})} \|\alpha\|_{L^4_{(sc)}({H}_{u'})}\|\nabla \chibh\|_{L^4_{(sc)}({H}_{u'})} \lesssim C \delta^{\frac{1}{4}}.
\end{equation*}
For the second integral,
\begin{equation*}
 \doubleint_{\D(u,\ub)} \chibh \cdot \nabla \alpha \cdot \nabla_4 \beta \lesssim \delta^{\frac{1}{2}} \int_0^{\ub} \|\nabla_4 \beta\|_{L^2_{(sc)}({H}_{u'})} \|\nabla \alpha\|_{L^2_{(sc)}({H}_{u'})}\|\chibh\|_{L^{\infty}_{(sc)}({H}_{u'})} \lesssim C \delta^{\frac{1}{4}}.
\end{equation*}
Hence,
\begin{equation*}
K_{1112} \lesssim C \delta^{\frac{1}{4}}.
\end{equation*}

To estimate $K_{1113}$, we proceed as follows,
\begin{align*}
K_{1113} &\leq|\int_{\Hb_{\ub}} \chibh \cdot \alpha \cdot \nabla \beta| \lesssim \delta^{\frac{1}{2}}  \|\nabla \beta\|_{L^2_{(sc)}({\Hb}_{\ub})} \|\alpha\|_{L^4_{(sc)}({\Hb}_{\ub})}\|\chibh\|_{L^4_{(sc)}({\Hb}_{\ub})}\\
&\lesssim  \delta^{\frac{1}{2}}\Rb (\delta^{-\frac{1}{4}} c(\Izero) + \delta^{-\frac{1}{4}}c(\Izero)\R^{\frac{1}{2}} + C\delta^{\frac{1}{16}})(\delta^{-\frac{1}{4}}+ C \delta^{\frac{1}{4}}) \lesssim c(\Izero) (\R+\Rb)^{\frac{3}{2}} +C\delta^{\frac{1}{4}}.
\end{align*}

Combining the estimates for $K_{1111}$, $K_{1112}$ and $K_{1113}$, we derive
\begin{equation*}
K_{111} \lesssim c(\Izero)  \R^{\frac{3}{2}} +C\delta^{\frac{1}{4}}.
\end{equation*}

We now turn to $K_{112}$.
\begin{align*}
 K_{112} &= \doubleint_{\D(u,\ub)} (\chibh \cdot \nabla_4 \alpha)(\chibh \cdot \alpha)= -\doubleint_{\D(u,\ub)} (\nabla_4 \chibh \cdot \alpha)(\chibh \cdot \alpha) + \frac{1}{2} \doubleint_{\D(u,\ub)} \nabla_4 |\chibh \cdot \alpha|^2\\
&=-\doubleint_{\D(u,\ub)} (\nabla_4 \chibh \cdot \alpha)(\chibh \cdot \alpha) + \frac{1}{2} \int_{{\Hb}_{\ub'}} \nabla_4 |\chibh \cdot \alpha|^2 \mid_{\ub' = 0}^{\ub'=\ub} \,= K_{1121} + K_{1122}.	
\end{align*}

To estimate $K_{1121}$, we use \eqref{NSE_L_chibh} to derive
\begin{align*}
 K_{1121}&= \doubleint_{\D(u,\ub)} ( \nabla \tensor \etab +2\omega \chibh -\frac{1}{2}\tr \chib \chih +\etab \tensor \etab-\frac{1}{2} \tr \chi \chibh)\cdot \alpha \cdot \chibh \cdot \alpha =\doubleint_{\D(u,\ub)} \chih 	\cdot \alpha \cdot \chibh \cdot \alpha + C\delta^{\frac{1}{4}}\\
&\lesssim \delta^{\frac{3}{2}}\int_{0}^{u} \|\chih\|_{L^4_{(sc)}({H}_{u})}\|\alpha\|_{L^4_{(sc)}({H}_{u})}\|\chibh\|_{L^4_{(sc)}({H}_{u})}\|\alpha\|_{L^4_{(sc)}({H}_{u})}+C\delta^{\frac{1}{4}} \lesssim C\delta^{\frac{1}{4}}.
\end{align*}
To estimate $K_{1122}$, we proceed as follows
\begin{align*}
 K_{1122} \lesssim \delta^{\frac{3}{2}}\|\chibh\|^2_{L^4_{(sc)}({H}_{u})}\|\alpha\|^2_{L^4_{(sc)}({H}_{u})}\lesssim C\delta^{\frac{1}{4}}.
\end{align*}
Thus,  we have $ K_{112} \lesssim  C\delta^{\frac{1}{4}}$. Hence,
\begin{equation*}
K_{11} \lesssim c(\Izero)  (\R+\Rb)^{\frac{3}{2}} +C\delta^{\frac{1}{4}}.
\end{equation*}

It remains to control $K_{12}$ whose integrand is $\psi_g^{(s_0)} \cdot \alpha(D_4 R) \cdot \Psi(D_4 R)^{(s_2)}_g$. The key \footnote{\quad In \cite{K-R-09}, the authors derived all $L^4_{(sc)}$ estimates for $\nabla_4 \psi$ except for the case $\psi = \omega$.} is to observe $\psi_g \neq \omega$. Otherwise, since total signature is at most $5$, it forces the integrand to be $\omega \cdot \alpha(D_4 R) \cdot \alphab(D_4 R)$. Once again, we investigate the origin of this term
\begin{equation*}
{}^{(\Lb)}\pi_{\mu\nu}\cdot Q(D_4 R)^{\mu\nu}{}_{N_1 N_2}.
\end{equation*}
By direction computation, we see that $\alpha(D_4 R) \cdot \alphab(D_4 R)$ never appears in $Q(D_4 R)^{\mu\nu}{}_{N_1 N_2}$, hence we arrive at a contradiction.

Based on the above observation, since we have $L^{4}_{(sc)}$ estimates on $\nabla_4 \psi_g$, we can proceed exactly as we did for $K_{11}$ to derive
\begin{equation*}
K_{12} \lesssim C\delta^{\frac{1}{4}}.
\end{equation*}
In this case, since $\psi_g$ is not anomalous, the estimates are much easier to derive. We skip details.
Combining the estimates for $K_{11}$ and $K_{12}$, we derive
\begin{equation}\label{K_1}
K_{1} \lesssim c(\Izero) (\R+\Rb)^{\frac{3}{2}} +C\delta^{\frac{1}{4}}.
\end{equation}

\subsubsection{Conclusion}
Combining the estimates for $K_0$ and $K_1$, we derive
\begin{equation}\label{Estimate for L K}
K \lesssim c(\Izero)(\R+\Rb)^{\frac{3}{2}} +C\delta^{\frac{1}{4}}.
\end{equation}

\subsection{Estimates for $J$}
The integrand of $J$ can be written schematically as
\begin{equation*}
\psi^{(s_0)} \cdot \Psi(D_4 R)^{(s_1)} \cdot \Psi(D_\nu R)^{(s_2)}
\end{equation*}
with total signature $3 \leq s_0 +s_1 +s_2 \leq 5$.  We split $J$ into
\begin{equation*}
J = J_0 + J_1 + J_2
\end{equation*}
where $J_k$ denotes the collection of terms with exactly $k$ anomalous curvature components.

We make the following observation: the only anomaly for $\Psi(D_4 R)^{(s_1)}$ is $\alpha(D_4 R)$; things are different for $\Psi(D_\nu R)^{(s_2)}$, it has multiple choices:  $\alpha(D_4 R)$, $\alphab(D_3 R)$, $\alpha(D_3 R)$ and $\beta(D_c R)$.  We also recall Section \ref{mild anomaly} which asserts that last two anomalies are mild in the sense that they all come from zero order term $\alpha$. This will be extremely crucial in our proof.

\subsubsection{Estimates for $J_0$}	
The integrand of $J_0$, by definition, has the following schematic expressions
\begin{equation*}
\psi^{(s_0)} \cdot \Psi(D_4 R)^{(s_1)}_g \cdot \Psi(D_\nu R)^{(s_2)}_g.
\end{equation*}
Thus, we bound $J_0$ as follows,
\begin{equation*}
J_0 \leq \delta^{\frac{1}{2}} \int_0^{\ub} \|\psi\|_{L^{\infty}_{(sc)}({H}_{u'})}\|\Psi(D_4 R)^{(s_1)}_g\|_{L^2_{(sc)}({H}_{u'})}\|\Psi(D_\nu R)^{(s_2)}_g\|_{L^2_{(sc)}({H}_{u'})}d \ub'\lesssim C \delta^{\frac{1}{2}}.
\end{equation*}

\subsubsection{Estimates for $J_1$}
We split $J_1$ into $J_1 = J_{11} + J_{12}$. For $J_{11}$, $\Psi(D_4 R)^{(s_1)} = \alpha(D_4 R)$; for $J_{12}$, $\Psi(D_4 R)^{(s_1)} = \Psi(D_4 R)^{(s_1)}_g$ is not anomalous.

The integrand of $J_{11}$ has the following form (recall that we ignore the Hodge dual part)
\begin{equation*}
 D_4 R_{X}{}^{\rho}{}_{Y}{}^{\delta} \cdot D^{\mu} L^{\nu} \cdot D_\nu R_{\mu \rho Z \delta}.
\end{equation*}
Since $\Psi(D_4 R)^{(s_1)} = \alpha(D_4 R)$, it forces $X=Y=L$, $Z= \Lb$, $\rho = a$ and $\delta =b$. Thus, the integrand is reduced to
\begin{equation*}
 \nabla_4 \alpha \cdot D^{\mu} L^{\nu} \cdot D_\nu R_{\mu a 3 b}.
\end{equation*}
Since $D^{\mu} L^{3} = 0$, we see $\nu \neq 3$. If $\nu = 4$, we use Bianchi identities, up to lower order terms, to convert $ D_\nu R_{\mu a 3 b}$ into the form $\nabla \Psi_g$. Hence, the only possible forms of the integrand are either $\chih \cdot \nabla_4 \alpha \cdot \nabla \Psi_g$ or $\psi_g \cdot \nabla_4 \alpha \cdot \nabla \Psi_g$ ($\psi_g \neq \omega$). We then split $J_{11}$ as
\begin{equation*}
 J_{11} = \doubleint_{\D(u,\ub)} \chih \cdot \nabla_4 \alpha \cdot \nabla \Psi_g + \doubleint_{\D(u,\ub)} \psi_g \cdot \nabla_4 \alpha \cdot \nabla \Psi_g = J_{111}+J_{112}.
\end{equation*}
Notice that $J_{112}$ has exactly the same form as $K_{12}$ term in the previous section, thus
\begin{equation*}
 J_{112} \lesssim C \delta^{\frac{1}{2}}.
\end{equation*}
The estimates for $J_{111}$ is quite similar to $K_{11}$. First of all, according to signature consideration, $\nabla \Psi_g =\nabla \betab$, thus
\begin{align*}
 J_{111} &= \doubleint_{\D(u,\ub)}\nabla_4 \chih \cdot  \alpha \cdot \nabla  \betab + \doubleint_{\D(u,\ub)}\chih \cdot \alpha \cdot \nabla_4 \nabla  \betab+\int_{\Hb_{\ub'}} \chih \cdot \alpha \cdot \nabla \betab \mid_{\ub'=0}^{\ub'=\ub}\\
&=J_{1111}+J_{1112}+J_{1113}.	
\end{align*}

To estimate $J_{111}$, we use null structure equation \eqref{NSE_L_chi},
\begin{align*}
J_{1111} &= \doubleint_{\D(u,\ub)}\nabla_4 \chih \cdot  \alpha \cdot \nabla  \betab = \doubleint_{\D(u,\ub)}(-\tr \chi \,\chih  -2\omega \chih -\alpha)\cdot  \alpha \cdot \nabla  \betab\\
&= \doubleint_{\D(u,\ub)} \alpha \cdot  \alpha \cdot \nabla  \betab + C \delta^{\frac{1}{4}} = \doubleint_{\D(u,\ub)} \alpha \cdot  \nabla \alpha \cdot \betab + C \delta^{\frac{1}{4}}.
\end{align*}
Hence,
\begin{equation*}
J_{1111} \lesssim \delta^{\frac{1}{2}} \int_0^{\ub} \|\nabla \alpha\|_{L^2_{(sc)}({H}_{u'})}\|\alpha\|_{L^4_{(sc)}({H}_{u'})}\|\beta\|_{L^4_{(sc)}({H}_{u'})} + C \delta^{\frac{1}{4}} \lesssim C \delta^{\frac{1}{4}}.
\end{equation*}

To estimate $J_{1112}$, we compute commutator
\begin{equation*}
 [\nabla_4,\nabla ] \betab = -\chi \cdot \nabla \betab + {}^*\!\beta \cdot \betab +\frac{1}{2}(\eta +\etab)\nabla_4 \betab + \etab \cdot \betab \cdot \chi,
\end{equation*}
Using this formula, we can easily derive
\begin{equation*}
| \doubleint_{\D(u,\ub)}\chih \cdot \alpha \cdot [\nabla_4, \nabla]  \betab| \lesssim C \delta^{\frac{1}{4}}.
\end{equation*}
Hence, up to an error of size $C \delta^{\frac{1}{4}}$, we have
\begin{align*}
J_{1112} =  \doubleint_{\D(u,\ub)} \chih \cdot \alpha \cdot \nabla \nabla_4 \betab= \doubleint_{\D(u,\ub)} \nabla \chih \cdot \alpha \cdot \nabla_4 \betab +   \doubleint_{\D(u,\ub)} \chih \cdot \nabla \alpha \cdot \nabla_4 \betab \lesssim C\delta^{\frac{1}{4}}.
\end{align*}
To estimate $J_{1113}$, we proceed as follows,
\begin{align*}
J_{1113} &\leq|\int_{\Hb_{\ub}} \chih \cdot \alpha \cdot \nabla \betab| \lesssim \delta^{\frac{1}{2}}  \|\nabla \betab\|_{L^2_{(sc)}({\Hb}_{\ub})} \|\alpha\|_{L^4_{(sc)}({\Hb}_{\ub})}\|\chih\|_{L^4_{(sc)}({\Hb}_{\ub})}\\
&\lesssim  \delta^{\frac{1}{2}}\Rb (\delta^{-\frac{1}{4}} c(\Izero) + \delta^{-\frac{1}{4}}c(\Izero)\R^{\frac{1}{2}} + C\delta^{\frac{1}{16}})(\delta^{-\frac{1}{4}}+ C \delta^{\frac{1}{4}}) \lesssim c(\Izero) (\R+\Rb)^{\frac{3}{2}} +C\delta^{\frac{1}{4}}.
\end{align*}
Putting things together, we have
\begin{equation*}
J_{11} \lesssim c(\Izero) (\R+\Rb)^{\frac{3}{2}} +C\delta^{\frac{1}{4}}.
\end{equation*}

We move on to the estimates for $J_{12}$ where $\Psi(D_\nu R)^{(s_2)}$ must be anomalous. Thus, because $D^\mu L^{3} =0 $, it must be either $\alpha(D_4 R)$ or $\beta(D_a R)$. We can assume $\Psi(D_\nu R)^{(s_2)} \neq \alpha(D_4 R)$. Otherwise, by signature considerations, those terms (of the form $\nabla \Psi_g \cdot \psi \cdot \nabla_4 \alpha$) have already been treated in $J_{11}$, so we ignore them at this stage. The anomalous term must be $\beta(D_a R)$. As we emphasized at the beginning of the section, its anomaly comes from the lower derivative term $\alpha$. Hence, up to an error of size $C\delta^{\frac{1}{4}}$, we can replace the integrand by
\begin{equation*}
 \nabla \Psi_g \cdot \psi \cdot \alpha +  \nabla \Psi \cdot \psi_g \cdot \alpha
\end{equation*}
where  $ \nabla \Psi_g \neq \nabla \alpha$ according to signature considerations. The second term can be estimated easily by placing $L^{4}_{(sc)}$ estimates on $\psi_g$ and $\alpha$ (which saves an extra $\delta^{\frac{1}{4}}$), thus we bound $J_{12}$ as follows
\begin{align*}
J_{12} &=C\delta^{\frac{1}{4}}+ \doubleint_{\D(u,\ub)} \psi \cdot \alpha \cdot \nabla \Psi_g = C\delta^{\frac{1}{4}}+\doubleint_{\D(u,\ub)} \nabla \psi \cdot \alpha \cdot \Psi_g+ \doubleint_{\D(u,\ub)} \psi \cdot \nabla \alpha \cdot \Psi_g\\
& \lesssim C \delta^{\frac{1}{4}}+\delta^{\frac{1}{2}} \int_0^{u} \|\nabla \psi\|_{L^2_{(sc)}} \|\alpha\|_{L^4_{(sc)}}\|\Psi_g\|_{L^4_{(sc)}} +  \|\psi\|_{L^\infty_{(sc)}} \|\nabla \alpha\|_{L^2_{(sc)}}\|\Psi_g\|_{L^2_{(sc)}} \lesssim C \delta^{\frac{1}{4}}.
\end{align*}
Finally, we can bound $J_1$ by
\begin{equation*}
J_{1} \lesssim (\R+\Rb)^{\frac{3}{2}} +C\delta^{\frac{1}{4}}.
\end{equation*}

\subsubsection{Vanishing for $J_2$}
We claim $J_2 =0$. Since both curvature terms in the integrand
\begin{equation*}
 D_4 R_{X}{}^{\rho}{}_{Y}{}^{\delta} \cdot D^{\mu} N^{\nu} \cdot D_\nu R_{\mu \rho Z \delta},
\end{equation*}
are anomalous, it must be
\begin{equation*}
 D_4 R_{4}{}^{a}{}_{4}{}^{b} \cdot D^{\mu} L^{\nu} \cdot D_\nu R_{\mu a 3 b},
\end{equation*}
Since total signature is $5$ at this situation, the second anomaly must be either $\alpha(D_3 R)$ or $\beta(D_c R)$. Their signature must be $2$. Once again, since $D^{\mu}L^3 =0$, thus the second anomaly must be $\beta(D_c R)$. Thus, $\nu = c$. Finally, since the signature of $D_c R_{\mu a 3 b}$ is at most $\frac{3}{2}$, it can not be anomalous. This shows $J_2 =0$.



\subsubsection{Conclusion}
Combining the estimates for $J_0$ and $J_1$, we finally derive estimates for $J$ as follows,
\begin{equation}\label{Estimate for L J}
J \lesssim c(\Izero) (\R + \Rb)^{\frac{3}{2}} +C\delta^{\frac{1}{4}}.
\end{equation}

We combine \eqref{total energy for non anomalous L components}, \eqref{Estimate for L I}, \eqref{Estimate for L K} and \eqref{Estimate for L J}, we conclude that
\begin{equation}\label{Energy Estimates part A}
\sum[ \|\Psi (D_4 R) \|_{L^2_{(sc)}(H_u^{(0,\ub)})}^2 + \|\Psi (D_4 R) \|_{L^2_{(sc)}({\Hb}_{\ub}^{(0,u)})}^2] \lesssim c(\Izero) (1+ (\R + \Rb)^{\frac{3}{2}}) +C\delta^{\frac{1}{8}}.
\end{equation}
Considering $\|\betab(D_4 R) \|_{L^2_{(sc)}({\Hb}_{\ub}^{(0,u)})}$ in \eqref{Energy Estimates part A}, we have
\begin{equation*}
 \|\nabla_4 \betab \|_{L^2_{(sc)}({\Hb}_{\ub}^{(0,u)})} \lesssim c(\Izero) (1+ (\R + \Rb)^{\frac{3}{2}}) +C\delta^{\frac{1}{8}}.
\end{equation*}
In view of\eqref{NBE_L_betab}, modulo quadratic terms which gives an error of size $C\delta^{\frac{1}{4}}$, we have
\begin{equation*}
\|-\nabla \rho + ^*\! \nabla \sigma\|_{L^2_{(sc)}({\Hb}_{\ub}^{(0,u)})} \lesssim c(\Izero) (1+ (\R + \Rb)^{\frac{3}{2}}) +C\delta^{\frac{1}{8}}.
\end{equation*}
According to the standard elliptic estimates for Hodge systems, we derive
\begin{equation*}\label{nabla rho sigma on Hb}
\|(\nabla \rho,  \nabla \sigma)\|_{L^2_{(sc)}({\Hb}_{\ub}^{(0,u)})} \lesssim c(\Izero) (1+ (\R + \Rb)^{\frac{3}{2}}) +C\delta^{\frac{1}{8}}.
\end{equation*}
Considering $\|\betab (D_4 R) \|_{L^2_{(sc)}({H}_{u}^{(0,u)})}$ in \eqref{Energy Estimates part A}, similarly, we derive
\begin{equation*}\label{nabla rho sigma on H}
\|(\nabla \rho,  \nabla \sigma)\|_{L^2_{(sc)}({H}_{u}^{(0,u)})} \lesssim c(\Izero) (1+ (\R + \Rb)^{\frac{3}{2}}) +C\delta^{\frac{1}{8}}.
\end{equation*}
Considering $\|\beta (D_4 R) \|_{L^2_{(sc)}({H}_{u}^{(0,u)})}$ in \eqref{Energy Estimates part A} and \eqref{NBE_L_beta}, we derive
\begin{equation*}\label{nabla alpha on H}
\|\nabla \alpha\|_{L^2_{(sc)}({H}_{u}^{(0,u)})} \lesssim c(\Izero) (1+ (\R + \Rb)^{\frac{3}{2}}) +C\delta^{\frac{1}{8}}.
\end{equation*}
Considering $\|\rho (D_4 R) \|_{L^2_{(sc)}({H}_{u}^{(0,u)})}$, $\|\sigma (D_4 R) \|_{L^2_{(sc)}({H}_{u}^{(0,u)})}$ in \eqref{Energy Estimates part A} and \eqref{NBE_L_rho} as well as \eqref{NBE_L_sigma}, we derive
\begin{equation*}\label{nabla beta on H}
\|\nabla \beta\|_{L^2_{(sc)}({H}_{u}^{(0,u)})} \lesssim c(\Izero) (1+ (\R + \Rb)^{\frac{3}{2}}) +C\delta^{\frac{1}{8}}.
\end{equation*}
Similarly, we have
\begin{equation*}\label{nabla beta on Hb}
\|\nabla \beta\|_{L^2_{(sc)}({\Hb}_{\ub}^{(0,u)})} \lesssim c(\Izero) (1+ (\R + \Rb)^{\frac{3}{2}}) +C\delta^{\frac{1}{8}}.
\end{equation*}
Putting all them all together, we have
\begin{align}\label{Close Bootstrap 1}
\|(\nabla \alpha, \nabla \beta, \nabla \rho, \nabla \sigma)\|_{L^2_{(sc)}({H}_{u})}  + \|(\nabla \beta, \nabla \rho,  & \nabla \sigma)\|_{L^2_{(sc)}({\Hb}_{\ub})} \notag \\
\quad & \lesssim c(\Izero) (1+ (\R + \Rb)^{\frac{3}{2}}) +C\delta^{\frac{1}{8}}.
\end{align}

\section{Energy Estimates on Incoming Derivatives}\label{Energy Estimates on Incoming Directions}
We take $N=\Lb$, $(X, Y, Z)=(L,L,\Lb)$ or $(X, Y, Z)=(L,\Lb,\Lb)$, in \eqref{basic energy identity} to derive
\begin{align*}
\int_{H_u} |\Psi(D_3 R)^{(s)}|^2 + \int_{{\Hb}_{\ub}}&|\Psi(D_3 R)^{(s-\frac{1}{2})}|^2 \lesssim \int_{H_0} |\Psi(D_3 R)^{(s)}|^2 \\
&\quad + \doubleint_{\D(u,\ub)}| D^\mu Q[D_3 R]_{\mu XYZ} + {}^{(N_0)}\!\pi_{\mu\nu}Q[D_3 R]^{\mu\nu}{}_{N_1 N_2}|.
\end{align*}
We multiply both sides by $\delta^{2s-3}$ and sum those two inequalities, in view of Remark \ref{remark forget delta}, we have
\begin{align}\label{total energy for non anomalous Lb components}
&\|(\beta(D_3 R), \rho(D_3 R), \sigma(D_3 R))\|_{L^2_{(sc)}(H_u^{(0,\ub)})}^2 + \|(\rho(D_3 R), \sigma(D_3 R), \betab(D_3 R)) \|_{L^2_{(sc)}({\Hb}_{\ub}^{(0,u)})}^2 \\
&\quad \lesssim \Izero^2 +I + J+ K\notag
\end{align}
where
\begin{equation*}
I + J=|\doubleint_{\D(u,\ub)} D^\mu Q[D_3 R]_{\mu XYZ}|, \quad K=|\doubleint_{\D(u,\ub)}{}^{(N_0)}\!\pi_{\mu\nu}Q[D_3 R]^{\mu\nu}{}_{N_1 N_2}|.
\end{equation*}

In this section, the possible anomalies for $\Psi(D_3 R)^{(s_1)}$ are $\alphab(D_3 R)$ and $\alpha(D_3 R)$; the possible anomalies for $\Psi(D_\nu R)^{(s_2)}$ are  $\alpha(D_4 R)$, $\alphab(D_3 R)$, $\alpha(D_3 R)$ and $\beta(D_c R)$.  We still emphasize that the anomalies for $\alpha(D_3 R)$ and $\beta(D_c R)$ are mild in nature. They come from the lower derivative anomaly $\alpha$. For example, in applications, we always use the following expressions,
\begin{equation}\label{anomalous behavior for alpha(D_3 R)}
\alpha(D_3 R) = \alpha + C \delta^{\frac{1}{4}}, ~\beta(D_a R) = \alpha + C \delta^{\frac{1}{4}}.
\end{equation}

\subsection{Estimates for $I$}
The integrand of $I$ can be written schematically as
\begin{equation*}
\Psi(D_3 R)^{(s_0)} \cdot \Psi^{(s_1)} \cdot \Psi^{(s_2)}
\end{equation*}
with total signature $ s_0 +s_1 +s_2 = 2$ or $3$. We split $I$ into
\begin{equation*}
I = I_0 + I_1 + I_2 + I_3
\end{equation*}
where $I_k$ denotes the collection of terms with exactly $k$ anomalies. According to the signature considerations, $I_3 = 0$, otherwise $s_2+s_3 \geq 4$.

\subsubsection{Estimates for $I_0$}
We control $I_0$ as follows,
\begin{equation}\label{I_0}
I_0 \lesssim \delta^{\frac{1}{2}}\delta^{-1} \int_0^{\ub} \|\Psi(D_3 R)_g^{(s_0)}\|_{L^2_{(sc)}({\Hb}_{\ub'})}\|\Psi_g^{(s_1)}\|_{L^4_{(sc)}({\Hb}_{\ub'})}\|\Psi_g^{(s_2)}\|_{L^4_{(sc)}({\Hb}_{\ub'})}d \ub' \lesssim C \delta^{\frac{1}{2}}.
\end{equation}

\subsubsection{Estimates for $I_1$}
We split $I_1$ into three terms
\begin{equation*}
I_1 = I_{11} + I_{12} + I_{13}
\end{equation*}
where the corresponding integrands for $I_{11}$, $I_{12}$ and $I_{13}$ are
\begin{equation*}
\Psi(D_3 R)^{(s_0)}_g \cdot \alpha \cdot \Psi^{(s_2)}_g, \quad \alpha(D_3 R) \cdot \Psi^{(s_1)}_g \cdot \Psi^{(s_2)}_g, \quad \alphab(D_3 R) \cdot \Psi^{(s_1)}_g \cdot \Psi^{(s_2)}_g
\end{equation*}
respectively.

To estimate $I_{11}$, we have
\begin{equation*}
I_{11} \lesssim \delta^{\frac{1}{2}} \delta^{-1} \int_0^{\ub} \|\Psi(D_3 R)_g^{(s_0)}\|_{L^2_{(sc)}({\Hb}_{\ub'})}\|\alpha\|_{L^4_{(sc)}({\Hb}_{\ub'})}\|\Psi_g^{(s_2)}\|_{L^4_{(sc)}({\Hb}_{\ub'})}d \ub'\lesssim C \delta^{\frac{1}{4}}.
\end{equation*}

To estimate $I_{12}$, in view of \eqref{anomalous behavior for alpha(D_3 R)}, modulo $C\delta^{\frac{1}{4}}$, we have
\begin{align*}
I_{12} &= \doubleint_{\D(u,\ub)} \alpha \cdot \Psi_g \cdot \Psi_g  \lesssim \delta^{\frac{1}{2}} \delta^{-1} \int_0^{\ub} \|\alpha\|_{L^4_{(sc)}({\Hb}_{\ub'})}\|\Psi_g\|_{L^2_{(sc)}({\Hb}_{\ub'})}\|\Psi_g\|_{L^4_{(sc)}({\Hb}_{\ub'})}\lesssim C \delta^{\frac{1}{4}}.
\end{align*}

To estimate $I_{13}$, we observe that none of the $\Psi_g$'s is $\alphab$. Otherwise, since the total signature is at least $2$, this force another $\Psi_g$ to be $\alpha$ which is impossible(in this case, we only assume $I_1$ contains one anomaly). Thus, up to an error of size $C\delta^{\frac{1}{4}}$,
\begin{equation*}
I_{13} =  \doubleint_{\D(u,\ub)} \nabla_3 \alphab \cdot \Psi_g \cdot \Psi_g = \doubleint_{\D(u,\ub)} \alphab \cdot \nabla_3 \Psi_g \cdot \Psi_g + \int_{H_{u'}} \alphab \cdot \Psi_g \cdot \Psi_g \mid_{u'=0}^{u'=u}.
\end{equation*}
For the bulk integral, we bound it as follows
\begin{equation*}
|\doubleint_{\D(u,\ub)} \alphab \cdot \nabla_3 \Psi_g \cdot \Psi_g| \lesssim \delta^{\frac{1}{2}} \int_0^{u} \|\nabla_3 \Psi_g\|_{L^2_{(sc)}({H}_{u'})}\|\alphab\|_{L^4_{(sc)}({H}_{u'})}\|\Psi_g\|_{L^4_{(sc)}({H}_{u'})} \lesssim C \delta^{\frac{1}{2}}.
\end{equation*}
For boundary integrals, we bound them as follows
\begin{equation*}
|\int_{H_{u}} \alphab \cdot \Psi_g \cdot \Psi_g| \lesssim \delta^{\frac{1}{2}}\|\alphab\|_{L^2_{(sc)}({H}_{u})} \|\Psi_g\|_{L^4_{(sc)}({H}_{u})}\|\Psi_g\|_{L^4_{(sc)}({H}_{u})} \lesssim C \delta^{\frac{1}{2}}.
\end{equation*}
Hence, we bound $I_{13} \lesssim C \delta^{\frac{1}{4}}$. Combined with the estimates for $I_{11}$ and $I_{12}$, we derive
\begin{equation}\label{I_1_Lb}
I_{1} \lesssim C \delta^{\frac{1}{4}}.
\end{equation}

\subsubsection{Estimates for $I_2$}
According to signature considerations, the integrand of $I_2$ must have the following schematic form
\begin{equation*}
\alphab(D_3 R) \cdot \alpha \cdot \Psi^{(s_2)}_g.
\end{equation*}

Since the total signature is $2$ or $3$, thus $\Psi_g \in \{\alphab, \rho, \sigma\}$. The estimates for $\Psi_g = \rho$ and $\Psi_g = \sigma$ are similar, we shall only concentrate on $\Psi_g  = \rho$. Hence, we further split $I_{2}$ into
\begin{equation*}
I_{2} = I_{21} + I_{22} = \doubleint_{\D(u,\ub)}\nabla_3 \alphab \cdot \alpha \cdot \alphab +  \doubleint_{\D(u,\ub)}\nabla_3 \alphab \cdot \alpha \cdot \rho.
\end{equation*}
To control $I_{21}$, we use move the derivative to a better position,
\begin{equation*}
I_{21} =  \doubleint_{\D(u,\ub)} \nabla_3 (\alphab \cdot \alphab) \cdot \alpha = \doubleint_{\D(u,\ub)} \alphab \cdot \alphab \cdot \nabla_3 \alpha+ \int_{H_{u'}}\alphab \cdot \alphab \cdot \alpha \mid_{u'=0}^{u'=u}.
\end{equation*}
In view of \eqref{anomalous behavior for alpha(D_3 R)}, up to an error of size $C\delta^{\frac{1}{4}}$, the bulk integral is
\begin{equation*}
\doubleint_{\D(u,\ub)} \alphab \cdot \alphab \cdot  \alpha
\end{equation*}
Then we can place $L^4_{(sc)}$ norm on $\alpha$ to save an $\delta^{\frac{1}{4}}$ to derive
\begin{equation*}
I_{21} \lesssim |\doubleint_{\D(u,\ub)} \alphab \cdot \alphab \cdot  \alpha| +  |\int_{H_{u}}\alphab \cdot \alphab \cdot \alpha| \lesssim C \delta^{\frac{1}{4}}.
\end{equation*}
To control $I_{22}$, we still integrate by parts,
\begin{equation*}
I_{22} =  \doubleint_{\D(u,\ub)} \nabla_3 \alphab  \cdot \alpha \cdot \rho= \doubleint_{\D(u,\ub)} \alphab \cdot \rho \cdot \nabla_3 \alpha+\doubleint_{\D(u,\ub)} \alphab \cdot \nabla_3 \rho \cdot \alpha+ \int_{H_{u'}}\alphab \cdot \rho \cdot \alpha \mid_{u'=0}^{u'=u}.
\end{equation*}
Exactly as for $I_{21}$, we can bound each term by $C \delta^{\frac{1}{4}}$ easily. Thus,
\begin{equation}\label{I_2_Lb}
I_{2} \lesssim I_{21} + I_{22} \lesssim C \delta^{\frac{1}{4}}.
\end{equation}

\subsubsection{Conclusion}
Combining the estimates for $I_{1}$ and $I_{2}$, we derive
\begin{equation}\label{Estimate for Lb I}
I \lesssim C \delta^{\frac{1}{4}}.
\end{equation}

\subsection{Estimates for $K$}
The integrand of $K$ can be written schematically as
\begin{equation*}
(\psi + \tr\chib_0)^{(s_0)} \cdot \Psi(D_3 R)^{(s_1)} \cdot \Psi(D_3 R)^{(s_2)}
\end{equation*}
with total signature $s_0 +s_1 +s_2 = 2$ or $3$. We split $K$ into
\begin{equation*}
K = K_0 + K_1 + K_2
\end{equation*}
where $K_k$ denotes the collection of terms with exactly $k$ anomalous curvature components.

\subsubsection{Estimates for $K_0$}
We split $K_0$ into two terms
\begin{equation*}
K_2 = K_{01} + K_{02}
\end{equation*}
where the corresponding integrands for $K_{01}$ and $K_{02}$ are
\begin{equation*}
\psi^{(s_0)} \cdot \Psi(D_3 R)^{(s_1)}_g \cdot \Psi(D_3 R)^{(s_2)}_g, \quad \tr\chib_0\cdot \Psi(D_3 R)^{(s_1)}_g \cdot \Psi(D_3 R)^{(s_2)}_g
\end{equation*}
respectively.

$K_{01}$ can be bounded as follows,
\begin{equation*}
K_{01} \leq \delta^{\frac{1}{2}} \int_0^{u} \|\psi\|_{L^{\infty}_{(sc)}({H}_{u'})}\|\Psi(D_3 R)^{(s_1)}_g\|_{L^2_{(sc)}({H}_{u'})}\|\Psi(D_3 R)^{(s_2)}_g\|_{L^2_{(sc)}({H}_{u'})}d u'\lesssim C \delta^{\frac{1}{2}}.
\end{equation*}

$K_{02}$ can be bounded as follows,
\begin{align*}
K_{02} &\leq |\doubleint_{\D(u.\ub)} \tr\chib_0\cdot \Psi(D_3 R)^{(s_1)}_g \cdot \Psi(D_3 R)^{(s_2)}_g| \lesssim \doubleint_{\D(u.\ub)} |\Psi(D_3 R)^{(s_1)}_g|^2 + |\Psi(D_3 R)^{(s_2)}_g|^2\\
& \leq  \sum (\int_0^{u} \|\Psi(D_3 R)_g\|_{L^2_{(sc)}({H}_{u'})}^2 + \delta^{-1} \int_0^{\ub} \|\Psi(D_3 R)_g\|_{L^2_{(sc)}({\Hb}_{\ub'})}^2).
\end{align*}
According to Gronwall's inequality, $K_{02}$ can be absorbed by the left hand side of the sum of \eqref{total energy for non anomalous L components} and \eqref{total energy for non anomalous Lb components}. Thus, we can ignore $I_{02}$ at this stage. Hence,
\begin{equation}\label{K_0_Lb}
K_{0} \lesssim C \delta^{\frac{1}{2}}.
\end{equation}

\subsubsection{Estimates for $K_1$}
One can argue exactly as in the beginning of Section \ref{estimates for K_1}, $\tr\chib_0$ does not appear as the connection coefficients in the integrand of $K_1$. We split $K_1$ into two terms
\begin{equation*}
K_1 = K_{11} + K_{12}
\end{equation*}
where the corresponding integrands for $K_{11}$ and $K_{12}$ are
\begin{equation*}
\psi^{(s_0)} \cdot \alpha(D_3 R) \cdot \Psi(D_3 R)^{(s_2)}_g, \quad \psi^{(s_0)} \cdot \alphab(D_3 R) \cdot \Psi(D_3 R)^{(s_2)}_g,
\end{equation*}
respectively.

We first control $K_{11}$. Up to an error of size $C\delta^{\frac{1}{4}}$, using Bianchi equations, we have
\begin{align*}
K_{11} &=  \doubleint_{\D(u,\ub)}\psi \cdot \alpha \cdot (\nabla_3 \Psi) ^{(s_2)} =  \doubleint_{\D(u,\ub)}\psi \cdot \alpha \cdot \nabla \Psi_g \\
&=  \doubleint_{\D(u,\ub)} \nabla \psi \cdot \alpha \cdot \Psi_g + \doubleint_{\D(u,\ub)} \psi \cdot \nabla \alpha \cdot \Psi_g.
\end{align*}

Now it is routine to bound those two integrals by $C\delta^{\frac{1}{4}}$. Hence,
\begin{equation*}
K_{11} \lesssim C \delta^{\frac{1}{4}}.
\end{equation*}

We now control $K_{12}$, we need to further decompose it into
\begin{equation*}
K_{12} = K_{121} + K_{122}= \doubleint_{\D(u,\ub)} \chih \cdot \alphab(D_3 R) \cdot \Psi(D_3 R)_g + \doubleint_{\D(u,\ub)}\psi \cdot \alphab(D_3 R) \cdot \Psi(D_3 R)_g
\end{equation*}
where $\psi \neq \chih$. We remark that $\chibh$ does not appear, otherwise $\Psi(D_3 R)_g = \alpha(D_3 R)$ which leads to a double anomaly.

To estimate $I_{121}$, observe that the appearance of $\chih$ is through the term
\begin{equation*}
{}^{(L)}\pi_{ab}\cdot Q(D_3 R)^{ab}{}_{N_1 N_2}.
\end{equation*}
Since $\alphab(D_3 R)$ also appears, this forces $N_1 = N_2 = \Lb$. Direct computations show $\Psi(D_3 R)_g = \rho(D_3 R)$ or $\Psi(D_3 R)_g = \sigma(D_3 R)$. Without loss of generality, we may assume
\begin{equation*}
K_{121} = \doubleint_{\D(u,\ub)} \chih \cdot \nabla_3 \alphab \cdot \nabla_3 \rho.
\end{equation*}
We use \eqref{NBE_Lb_rho} to replace $\nabla_3 \rho$. Up to an error of size $C \delta^{\frac{1}{4}}$, we have
\begin{align*}
K_{121} &= \doubleint_{\D(u,\ub)} \chih \cdot \nabla_3\alphab \cdot \nabla \betab = I_{1211}+I_{1212}+I_{1213}\\
&= \doubleint_{\D(u,\ub)} \nabla_3\chih \cdot \alphab \cdot \nabla \betab+\doubleint_{\D(u,\ub)} \chih \cdot \alphab \cdot \nabla_3\nabla \betab+ \int_{H_{u'}} \chih \cdot \alphab \cdot \nabla \betab \mid_{u'=0}^{u'=u}
\end{align*}
For the boundary integral $I_{1213}$,
\begin{equation*}
 K_{1213} \lesssim \sup_{u}|\int_{H_{u}} \chih \cdot \alphab \cdot \nabla \betab| \lesssim  \delta^{\frac{1}{2}}\|\chih\|_{L^{\infty}_{(sc)}}\|\alphab\|_{L^2_{(sc)}({H}_{u})}\|\nabla \betab\|_{L^2_{(sc)}({H}_{u})} \lesssim C \delta^{\frac{1}{2}}.
\end{equation*}
For $K_{1212}$, we compute commutator
\begin{equation*}
 [\nabla_3,\nabla ] \betab = -\chib \cdot \nabla \betab + {}^*\!\betab \cdot \betab + \frac{1}{2}(\eta +\etab)\nabla_3 \betab + \chib \cdot \eta \cdot \betab.
\end{equation*}
All terms are quadratic except the first one which is essentially linear in $\nabla \betab$. Thus, it is routine to derive
\begin{equation*}
|\doubleint_{\D(u,\ub)} \chih \cdot \alphab \cdot [\nabla_3, \nabla] \betab| \lesssim |\doubleint_{\D(u,\ub)} \chih \cdot \alphab \cdot \nabla \betab|+C \delta^{\frac{1}{4}} \lesssim C \delta^{\frac{1}{4}}.
\end{equation*}
Hence, up to an error of size $C\delta^{\frac{1}{4}}$, we have
\begin{equation*}
K_{1212} = \doubleint_{\D(u,\ub)} \chih \cdot \alphab \cdot \nabla \nabla_3 \betab = \doubleint_{\D(u,\ub)} \nabla \chih \cdot \alphab \cdot \nabla_3 \betab + \doubleint_{\D(u,\ub)} \chih \cdot \nabla \alphab \cdot \nabla_3 \betab.
\end{equation*}
Hence, we eliminate all the anomalies. Thus, it is routine to bound both integrals by $C \delta^{\frac{1}{4}}$ as well as
\begin{equation*}
K_{1212} \lesssim C \delta^{\frac{1}{4}}.
\end{equation*}
For $K_{1211}$, we use \eqref{NSE_Lb_chih} to replace $\nabla_4 \chih$, thus,
\begin{align*}
K_{1211} &=\doubleint_{\D(u,\ub)} ( -\frac{1}{2} \tr \chib \chih + \nabla \tensor \eta +2\omegab \chih -\frac{1}{2}\tr \chi \chibh +\eta \tensor \eta.) \cdot \alphab \cdot \nabla \betab\\
&= -\frac{1}{2}\tr \chib_0 \doubleint_{\D(u,\ub)} \chih  \cdot \alphab \cdot \nabla \betab + \doubleint_{\D(u,\ub)}\nabla \tensor \eta \cdot \alphab \cdot \nabla \betab + C \delta^{\frac{1}{4}}.
\end{align*}
For the first integral, we simply use $L^{\infty}_{(sc)}$ on $\chih$ to overcome the anomaly; for the second one, we control it as follows,
\begin{equation*}
\doubleint_{\D(u,\ub)}\nabla \tensor \eta \cdot \alphab \cdot \nabla \betab  \leq \delta^{\frac{1}{2}} \int_0^{u} \|\nabla \betab\|_{L^{2}_{(sc)}({H}_{u'})}\|\alphab\|_{L^4_{(sc)}({H}_{u'})}\|\nabla \tensor \eta\|_{L^4_{(sc)}({H}_{u'})}d u'\lesssim C \delta^{\frac{1}{2}}.
\end{equation*}
This yields the control of $I_{1211}$. Combined with the estimates for $K_{1212}$ and $K_{1213}$, we have
\begin{equation*}
K_{121} \lesssim C \delta^{\frac{1}{4}}.
\end{equation*}

To estimate $K_{122}$, we observe that in its integrand $\psi \cdot \alphab(D_3 R) \cdot \Psi(D_3 R)_g$, we have $\psi \neq \omegab$. Otherwise the total signature is less than $2$. This allow $\nabla_3 \psi$ to have $L^4_{(sc)}$ estimates (while $\nabla_3 \omegab$ does not, see \cite{K-R-09}). We can proceed exactly (much easier in reality) as we did for $K_{121}$ to show that $K_{12} \lesssim C \delta^{\frac{1}{4}}$. Combined with the estimates for $K_{11}$, we derive
\begin{equation*}
K_{1} \lesssim C \delta^{\frac{1}{4}}.
\end{equation*}

\subsubsection{Vanishing for $K_2$}
We claim $K_2 = 0$. The integrand of $K_2$ may have the following three types of terms
\begin{equation*}
(\psi + \tr\chib_0) \cdot \alpha(D_3 R) \cdot \alpha(D_3 R), \quad (\psi + \tr\chib_0) \cdot \alphab(D_3 R)\cdot \alphab(D_3 R), \quad (\psi + \tr\chib_0) \cdot \alpha(D_3 R) \cdot \alphab(D_3 R).
\end{equation*}
Since the total signature is either 2 or 3, this rules out the first two cases immediately. To rule out the third one, notice that its appearance is originally through the following term
\begin{equation*}
{}^{(N_0)}\!\pi_{\mu\nu}Q[D_3 R]^{\mu\nu}{}_{N_1 N_2}
\end{equation*}
Here, $N_1$ and $N_2$ are null. Once again,direct computations show that $\alpha \cdot \alphab$ never appears in $Q_{\mu\nu N_1 N_2}$.

Finally, we conclude that
\begin{equation}\label{K Lb}
K \lesssim C \delta^{\frac{1}{4}}.
\end{equation}

\subsection{Estimates for $J$}
The integrand of $J$ can be written schematically as
\begin{equation*}
(\psi + \tr\chib_0)^{(s_0)} \cdot \Psi(D_3 R)^{(s_1)} \cdot \Psi(D_\nu R)^{(s_2)}
\end{equation*}
with total signature $s_0 +s_1 +s_2 = 2$ or $3$. We split $J$ into
\begin{equation*}
J = J_0 + J_1 + J_2
\end{equation*}
where $J_k$ denotes the collection of terms with exactly $k$ anomalous curvature components.

\subsubsection{Estimates for $J_0$}
The integrand of $J_0$ must be one of the following forms
\begin{equation*}
\psi^{(s_0)} \cdot \Psi(D_3 R)^{(s_1)}_g \cdot \Psi(D_\nu R)^{(s_2)}_g, \quad \tr\chib_0\cdot \Psi(D_3 R)^{(s_1)}_g \cdot \Psi(D_\nu R)^{(s_2)}_g.
\end{equation*}
Thus,  those terms can be dealt with exactly as we have done for $K_0$ is last subsection (of the current section). Thus, modulo the terms which will be removed eventually by Gronwall's inequality, we derive
\begin{equation}\label{J_0_Lb}
J_{0} \lesssim C \delta^{\frac{1}{2}}.
\end{equation}

\subsubsection{Estimates for $J_1$}
According to the position of the anomaly, we split $J_1$ into
\begin{equation*}
J_1 = J_{11} + J_{12} + J_{13}
\end{equation*}
where the corresponding integrands for $J_{11}$, $J_{12}$ and $J_{13}$ are
\begin{equation*}
(\psi+\tr\chib_0) \cdot \alpha(D_3 R) \cdot \Psi(D_\nu R)_g, (\psi+\tr\chib_0) \cdot \alphab(D_3 R) \cdot \Psi(D_\nu R)_g, (\psi+\tr\chib_0) \cdot \Psi(D_3 R)_g \cdot \Psi(D_\nu R)_b
\end{equation*}
respectively. We use subindex $b$ to denote the anomaly ('b' for bad).

To estimate $J_{11}$, since the total signature is at most $3$, thus the signature of $\Psi(D_\nu R)_g$ is at most $1$. Using Bianchi equations, up to an error of size $C\delta^{\frac{1}{4}}$, we can replace the curvature term $\Psi(D_\nu R)^{(s_2)}_g$ by $\nabla \Psi_g$ and we aslo replace the mild $\alpha(D_3 R)$ anomaly by $\alpha$. Now we only consider the worst scenario. It should contain $\tr\chib_0$. In this case, according to signature considerations, $\nabla \Psi_g\neq \nabla \alphab$.  Hence we control the worst scenario for $J_{11}$ as follows,
\begin{align*}
J_{11} &\leq |\doubleint_{\D(u.\ub)} \tr\chib_0 \cdot \alpha \cdot \nabla \Psi_g| = |\doubleint_{\D(u.\ub)} \tr\chib_0 \cdot \nabla \alpha \cdot \Psi_g| \lesssim  \int_0^{u} \|\nabla \alpha\|_{L^2_{(sc)}({H}_{u})} \|\Psi_g\|_{L^2_{(sc)}({H}_{u})}\\
&\lesssim \Rone (\Izero + c(\Izero)\R^{\frac{1}{2}}+C\delta^{\frac{1}{8}})\lesssim c(\Izero)(\R+\Rb)^{\frac{3}{2}}+C\delta^{\frac{1}{8}}.
\end{align*}

To estimate $J_{12}$, recall that the integrand is originally from (modulo the Hodge dual part)
\begin{equation*}
 D_3 R_{X}{}^{\rho}{}_{Y}{}^{\delta} \cdot D^{\mu} \Lb^{\nu} \cdot D_\nu R_{\mu \rho Z \delta},
\end{equation*}
It has to be
\begin{equation*}
 D_3 R_{3}{}^{a}{}_{3}{}^{b} \cdot D^{\mu} \Lb^{\nu} \cdot D_\nu R_{\mu a 4 b},
\end{equation*}
If $(\mu, \nu) \notin \{(3,3), (3,c), (c,3), (c,d) | c,d=1,2\}$, we know  $D^{\mu} \Lb^{\nu} = 0$. We also notice $(\mu, \nu) \neq (c,d)$, otherwise $J_1$ contains a double anomaly in curvature components. Hence, neither $\chibh$ or $\tr\chib_0$ appears as connection coefficients. Furthermore, $\omega$ is also absent. Thus, $J_{12}$ is essentially $K_{122}$ in last subsection (of the current section). Hence,
\begin{equation*}
 J_{12} \lesssim C\delta^{\frac{1}{4}}.
\end{equation*}

To estimate $J_{13}$, we need to carefully analyze the structure of
\begin{equation*}
 D_3 R_{X}{}^{\rho}{}_{Y}{}^{\delta} \cdot D^{\mu} \Lb^{\nu} \cdot D_\nu R_{\mu \rho Z \delta},
\end{equation*}
As we pointed out at the beginning of the section, the possible anomalies for last term are $\alpha(D_4 R)$, $\alphab(D_3 R)$, $\alpha(D_3 R)$ and $\beta(D_c R)$. First of all, $\alpha(D_4 R)$ can not occur since $D^\mu \Lb^4 =0$. Thus, we have three cases left to deal with.

If $\Psi(D_\nu R) = \alphab(D_3 R)$, thus $\nu =3$ and the connection coefficient $D^\mu \Lb^3 \neq \chibh ~\text{or}~ \tr\chib_0$. The integrand looks like
\begin{equation*}
 \nabla \Psi_g \cdot \psi_g \cdot \nabla_3 \alphab.
\end{equation*}
Those terms were estimated in $K_{122}$ in the last subsection.

If $\Psi(D_\nu R) = \alpha(D_3 R)$ or $\beta(D_c R)$, those anomalies are mild hence can be replaced by $\alpha$. Combined with Bianchi equations if necessary, up to an error of size $C\delta^{\frac{1}{4}}$, the corresponding integrand looks like
\begin{equation*}
(\psi + \tr\chib_0) \cdot \nabla \Psi_g \cdot \alpha.
\end{equation*}
Those terms have been treated in $J_{11}$. Thus, we can derive estimates for $J_{13}$. Combined with $J_{11}$ and $J_{12}$, we have
\begin{equation}\label{J_1_Lb}
J_{1} \lesssim c(\Izero)(\R+\Rb)^{\frac{3}{2}}+C\delta^{\frac{1}{8}}.
\end{equation}

\subsubsection{Estimates for $J_2$}
In this case, both the curvature components in the integrand
\begin{equation*}
 D_3 R_{X}{}^{\rho}{}_{Y}{}^{\delta} \cdot D^{\mu} \Lb^{\nu} \cdot D_\nu R_{\mu \rho Z \delta},
\end{equation*}
are anomalous. According to the different anomalies of $ D_3 R_{X}{}^{\rho}{}_{Y}{}^{\delta}$, we decompose $J_{2}$ into
\begin{equation*}
 J_2 =J_{21}+J_{22}.
\end{equation*}
For $J_{21}$, $D_3 R_{X}{}^{\rho}{}_{Y}{}^{\delta} = D_3 R_{4a4b}$; for $J_{22}$, $D_3 R_{X}{}^{\rho}{}_{Y}{}^{\delta} = D_3 R_{3a3b}$.

To estimate $J_{21}$, we observe that $D_\nu R_{\mu \rho Z \delta} = D_3 R_{3 a 3 b}$ because the total signature is at most $3$. Thus, up to an error of size $C\delta^{\frac{1}{4}}$,
\begin{align*}
J_{21} &= \doubleint_{\D(u,\ub)} \omega \cdot \nabla_4 \alpha \cdot \nabla_3 \alphab = J_{211}+J_{212}+J_{213}\\
&= \doubleint_{\D(u,\ub)} \nabla_3 \omega \cdot \nabla_4 \alpha \cdot \alphab + \int_{H_{u'}} \omega \cdot \alphab \cdot \nabla_4 \alpha \mid_{u'=0}^{u'=u} +\doubleint_{\D(u,\ub)} \omega \cdot \alphab \cdot \nabla_3 \nabla_4 \alpha.
\end{align*}
For $J_{211}$, according to \eqref{NSE_Lb_omega}, up to an error term of size $C\delta^{\frac{1}{4}}$, we can replace $\nabla_3 \omega$ by $\rho$, thus,
\begin{align*}
J_{211} &\leq \delta^{\frac{1}{2}} \int_{0}^{u} \|\rho\|_{L^4_{(sc)}({H}_{u})} \|\alphab\|_{L^4_{(sc)}({H}_{u})} \|\nabla_4 \alpha\|_{L^2_{(sc)}({H}_{u})}
\end{align*}
We can then repeat the proof of Lemma \ref{L4 alpha precise} to derive
\begin{equation*}
 \|(\rho, \alphab)\|_{L^4_{(sc)}(H_u)} \lesssim c(\Izero) + c(\Izero)\R^{\frac{1}{2}} + C\delta^{\frac{1}{4}}.
\end{equation*}
Thus,
\begin{align*}
J_{211} &\leq \delta^{\frac{1}{2}} \int_{0}^{u} (\R + C\delta^{\frac{1}{4}})^2 (\delta^{-\frac{1}{2}}\Izero + C\delta^{\frac{1}{4}}) \lesssim C\delta^{\frac{1}{4}} + c(\Izero)\int_{0}^{u} \R^2 du
\end{align*}
The last integral will be eventually removed by Gronwall's inequality, thus we could write
\begin{align*}
J_{211}  \lesssim C\delta^{\frac{1}{4}}.
\end{align*}
For $J_{212}$, in view of Proposition \ref{pricise on chi chib}, we have
\begin{equation}\label{precise estimates on omega}
  \|\omega\|_{L^4_{(sc)}({H}_{u})} \lesssim \Rzero^{\frac{1}{2}} \R^{\frac{1}{2}} + C \delta^{\frac{1}{4}} \lesssim  \R^{\frac{3}{4}} + C \delta^{\frac{1}{4}}.
\end{equation}
We derive
\begin{align*}
 J_{212} &\lesssim |\int_{H_{u}} \omega \cdot \alphab \cdot \nabla_4 \alpha| \leq \delta^{\frac{1}{2}} \|\omega\|_{L^4_{(sc)}({H}_{u})} \|\alphab\|_{L^4_{(sc)}({H}_{u})} \|\nabla_4 \alpha\|_{L^2_{(sc)}({H}_{u})}\\
&\lesssim \delta^{\frac{1}{2}} ( \R^{\frac{3}{4}} + C \delta^{\frac{1}{4}})(\R+ C\delta^{\frac{1}{4}})(\delta^{-\frac{1}{2}}c(\Izero) +  C \delta^{\frac{1}{4}}) \lesssim  c(\Izero)\R^{\frac{7}{4}} + C \delta^{\frac{1}{4}}.
\end{align*}
For $J_{213}$, we switch the order of $\nabla_3$ and $\nabla_4$ acting on $\alpha$,
\begin{align*}
J_{213} &= \doubleint_{\D(u,\ub)} \omega \cdot \alphab \cdot \nabla_4 \nabla_3 \alpha + \doubleint_{\D(u,\ub)} \omega \cdot \alphab \cdot [\nabla_3,\nabla_4] \alpha
\end{align*}
We ignore the commutator term since it is bounded by $C\delta^{\frac{1}{4}}$. The proof is routine. Thus, up to an error of size $C\delta^{\frac{1}{4}}$,
\begin{equation*}
 J_{213}=\doubleint_{\D(u,\ub)} \nabla_4 \omega \cdot  \alphab \cdot \nabla_3 \alpha + \doubleint_{\D(u,\ub)}  \omega \cdot  \nabla_4 \alphab \cdot \nabla_3 \alpha+ \int_{\Hb_{\ub'}} \omega \cdot \alphab \cdot \nabla_3 \alpha \mid_{\ub=0}^{\ub'=\ub}
\end{equation*}
The last two integrals are bounded by $C \delta^{\frac{1}{4}}$. In fact, we can replace the mild anomaly $\nabla_3 \alpha$ by $\alpha$. Then the proof becomes routine. We now move on to the most dangerous term in $J_{213}$,
\begin{equation*}
 \doubleint_{\D(u,\ub)} \nabla_4 \omega \cdot  \alphab \cdot \nabla_3 \alpha
\end{equation*}
It contains $\nabla_4 \omega$ which does not have $L^{4}_{(sc)}$ estimates. We can only rely on its $L^{2}_{(sc)}$ estimates. It requires an extra integration by parts. Up to an error of size $C\delta^{\frac{1}{4}}$, we use \eqref{NBE_Lb_alpha} to derive
\begin{equation*}
 \doubleint_{\D(u,\ub)} \nabla_4 \omega \cdot  \alphab \cdot \nabla_3 \alpha =\doubleint_{\D(u,\ub)} \nabla_4 \omega \cdot  \alphab \cdot \nabla \beta + \doubleint_{\D(u,\ub)} \nabla_4 \omega \cdot  \alphab \cdot \alpha
\end{equation*}
The second integral is bounded by $C\delta^{\frac{1}{4}}$, because we can put $L^2_{(sc)}$ on $\nabla_4 \omega$ and $L^4_{(sc)}$ on curvature components. We move around the derivatives in the first integral,
\begin{align*}
 \doubleint_{\D(u,\ub)} \nabla_4 \omega \cdot \alphab \cdot \nabla \beta &= \doubleint_{\D(u,\ub)} \omega \cdot \nabla_4 \alphab \cdot \nabla \beta +  \doubleint_{\D(u,\ub)}\omega \cdot \alphab \cdot \nabla_4 \nabla \beta+ \int_{\Hb_{\ub'}} \omega \cdot \alphab \cdot \nabla \beta \mid_{\ub=0}^{\ub'=\ub}\\
&= \doubleint_{\D(u,\ub)}\omega \cdot \alphab \cdot \nabla_4 \nabla \beta + C\delta^{\frac{1}{4}}\\
&= \doubleint_{\D(u,\ub)}\omega \cdot \alphab \cdot [\nabla_4 ,\nabla ]\beta + \doubleint_{\D(u,\ub)}\omega \cdot \alphab \cdot  \nabla  \nabla_4 \beta\\
&= \doubleint_{\D(u,\ub)}\omega \cdot \alphab \cdot [\nabla_4 ,\nabla ]\beta + \doubleint_{\D(u,\ub)} \nabla \omega \cdot \alphab \cdot   \nabla_4 \beta + \doubleint_{\D(u,\ub)}\omega \cdot \nabla \alphab \cdot  \nabla_4 \beta
\end{align*}
Thus, we removed all the anomalies. This yields the bound for $J_{213}$. Together with $J_{211}$ and $J_{212}$, we derive
\begin{equation*}
 J_{21} \lesssim  c(\Izero)\R^{\frac{7}{4}} + C \delta^{\frac{1}{4}}.
\end{equation*}

We now estimate $J_{22}$. Its integrand is in the following form
\begin{equation*}
 D_3 R_{3}{}^{a}{}_{3}{}^{b} \cdot D^{\mu} \Lb^{\nu} \cdot D_\nu R_{\mu a 4 b},
\end{equation*}
Since $D^{4} \Lb^{\nu} =D^{\mu} \Lb^{4} = 0$ and $ D_\nu R_{\mu a 4 b}$ is an anomaly, the integrand of $J_{22}$ is reduced to the following form
\begin{equation*}
 D_3 R_{3}{}^{a}{}_{3}{}^{b} \cdot D^{d} \Lb^{c} \cdot D_c R_{d a 4 b} = D_3 R_{3}{}^{a}{}_{3}{}^{b} \cdot \chib_{dc} \cdot D_c R_{d a 4 b}
\end{equation*}
We further decompose $J_{22}$ into
\begin{equation*}
 J_{22}=J_{221}+J_{222}
\end{equation*}
such that the corresponding integrands of $J_{221}$ and $J_{222}$ are
\begin{equation*}
 D_3 R_{3}{}^{a}{}_{3}{}^{b} \cdot \chibh_{cd} \cdot D_c R_{d a 4 b}, \quad D_3 R_{3}{}^{a}{}_{3}{}^{b} \cdot \tr\chib_{0} \cdot D_c R_{c a 4 b},
\end{equation*}
respectively.

For $J_{221}$, up to an error term of size $C\delta^{\frac{1}{4}}$, we replace $D_c R_{d a 4 b}$ by $\nabla \beta + \alpha$ to derive
\begin{equation*}
 J_{221} = \doubleint_{\D(u,\ub)} \chibh \cdot \nabla_3 \alphab \cdot \nabla \beta + \doubleint_{\D(u,\ub)} \chibh \cdot \nabla_3 \alphab \cdot \alpha = J_{2211} + J_{2212}.
\end{equation*}
For $J_{2211}$,
\begin{equation*}
 J_{2211}=\doubleint_{\D(u,\ub)}  \nabla_3 \chibh \cdot\alphab \cdot \nabla \beta + \doubleint_{\D(u,\ub)} \chibh \cdot\alphab \cdot \nabla_3 \nabla \beta+ \int_{H_{u'}} \chibh \cdot\alphab \cdot \nabla \beta \mid_{\ub=0}^{\ub'=\ub}
\end{equation*}
The last two integrals are bounded by $C\delta^{\frac{1}{4}}$. The proof is routine. For the first term, we use \eqref{NSE_Lb_chib} to replace $\nabla_3 \chibh$, thus
\begin{equation*}
 J_{2211} = \doubleint_{\D(u,\ub)}  (-\tr \chib \, \chibh -2\omegab \chibh -\alphab) \cdot\alphab \cdot \nabla \beta + C\delta^{\frac{1}{4}}\lesssim C \delta^{\frac{1}{4}}.
\end{equation*}
For $J_{2212}$, we have
\begin{align*}
 J_{2212} &= \doubleint_{\D(u,\ub)} \nabla_3 \chibh \cdot \alphab \cdot \alpha +  \doubleint_{\D(u,\ub)}  \chibh \cdot \alphab \cdot \nabla_3 \alpha+ \int_{H_{u'}} \chibh \cdot \alphab \cdot \alpha \mid_{u=0}^{u'=u}
\end{align*}
Combining with\eqref{NSE_Lb_chib} and \eqref{anomalous behavior for alpha(D_3 R)}, each of the above integrals generates the most difficult term
\begin{equation*}
 \int_{H_{u}} \chibh \cdot \alphab \cdot \alpha.
\end{equation*}
In view of Proposition \ref{pricise on chi chib},
\begin{equation*}
 \|\chibh\|_{L^4_{(sc)}(H)} \lesssim c(\Izero) \delta^{-\frac{1}{4}} + \Rzerob^{\frac{1}{2}}\Roneb^{\frac{1}{2}} + C \delta^{\frac{1}{4}} \lesssim c(\Izero) \delta^{-\frac{1}{4}} + c(\Izero)\R^{\frac{3}{4}} + C \delta^{\frac{1}{4}}.
\end{equation*}
Thus, in view of Lemma \ref{L4 alpha precise} we have
\begin{align*}
 |\int_{H_{u}} \chibh \cdot \alphab \cdot \alpha| &\lesssim \delta^{\frac{1}{2}}(c(\Izero) \delta^{-\frac{1}{4}} +c(\Izero) \R^{\frac{3}{4}} + C \delta^{\frac{1}{4}})(\Izero  + c(\Izero)\R^{\frac{1}{2}} + C \delta^{\frac{1}{8}})\delta^{-\frac{1}{4}}(\Izero  + \R^{\frac{1}{2}} + C \delta^{\frac{1}{4}})\\
&\lesssim c(\Izero)  + c(\Izero) \R^{\frac{7}{4}} + C \delta^{\frac{1}{16}}.
\end{align*}
Thus, yields the estimates on $J_{2212}$. Combined with $J_{2211}$, we have
\begin{align*}
 J_{221} \lesssim c(\Izero)  + c(\Izero) \R^{\frac{7}{4}} + C \delta^{\frac{1}{16}}.
\end{align*}

The last mission we need to complete is to show that $J_{222}$ vanishes. In fact, the expression of the integrand
\begin{equation*}
 D_3 R_{3}{}^{a}{}_{3}{}^{b} \cdot D_c R_{c a 4 b} = \alphab(D_3 R)_{ab} \varepsilon_{ca}\!^*\beta(D_c R)_b
\end{equation*}
is not sufficient to show $J_{222}$ vanishes. And we can not control it. In view of the formula \eqref{divergence of Q}, we have to also investigate the part from the Hodge dual to see the cancelation. We now show that
\begin{equation*}
 D_3 \!^*R_{3}{}^{a}{}_{3}{}^{b}  \cdot D_c \!^*R_{c a 4 b} =\alphab(D_3 \!^*R)_{ab} \varepsilon_{ca}\!^*\beta(D_c \!^* R)_b=- \alphab(D_3 R)_{ab} \varepsilon_{ca}\!^*\beta(D_c R)_b.
\end{equation*}

We recall some basic formulas from \cite{Ch-K}. For 1-form $\beta$, $\!^* \beta_a = \varepsilon_{ab}\beta^b$. For a symmetric traceless 2-tensor $\alpha$, $\!^* \alpha_{ab} = \varepsilon_{ac}\alpha^{c}{}_{b}$ and $\alpha^*_{ab} = \alpha_{a}{}^{c}\varepsilon_{cb}$. One can easily check that $\!^* \alpha = - \alpha^*$.

We also have $\alpha(\!^* W) = \!^*\alpha(W) = - \alpha(W)^*$, $\beta(\!^* W) =  - \beta(W)^*$ and $W_{ab4c}$. Thus,
\begin{align*}
 \alphab(D_3 \!^*R)_{ab} \varepsilon_{ca}\!^*\beta(D_c \!^* R)_b&=[-\alphab(D_3 R)^*_{ab}] \varepsilon_{ca}\varepsilon_{bd}\beta(D_c \!^* R)_d=\alphab(D_3 R)^*_{ab} \varepsilon_{ca}\varepsilon_{bd}\!^*\beta(D_c  R)_d\\
&=	\alphab(D_3 R)^*_{ae} \varepsilon_{eb}\varepsilon_{ca}\varepsilon_{bd}\!^*\beta(D_c  R)_d = -\alphab(D_3 R)^*_{ae} \varepsilon_{ca}\delta_{ed}\!^*\beta(D_c  R)_d\\
&=- \alphab(D_3 R)_{ab} \varepsilon_{ca}\!^*\beta(D_c R)_b.
\end{align*}
Thus, $J_{222} = 0$. 
\begin{remark}
This cancelation can be explained in another way. To derive energy estimates at this situation, i.e. $N=\Lb$ and $(X,Y,Z) = (L,\Lb,Lb)$, by using Bel-Robinson tensors, it is equivalent to first commute \eqref{NBE_Lb_rho},\eqref{NBE_Lb_sigma} and \eqref{NBE_L_betab} with $\nabla_3$, and then multiply the top order terms and integrate by parts directly on $\D(u,ub)$. In this way, we see directly that $J_{222}$ type term do not appear.
\end{remark}

Putting all the estimates together, we derive
\begin{align*}
 J_{2} \lesssim c(\Izero)  + c(\Izero) \R^{\frac{7}{4}} + C \delta^{\frac{1}{16}}.
\end{align*}
Combined with the estimates for $J_{0}$ and $J_1$, we have
\begin{equation*}
 J \lesssim c(\Izero)  + c(\Izero) (\R+\Rb)^{\frac{7}{4}} + C \delta^{\frac{1}{16}}.
\end{equation*}
Combined again with \eqref{total energy for non anomalous Lb components}, \eqref{Estimate for Lb I} and \eqref{K Lb}, we derive
\begin{equation}\label{Energy Estimates part B}
\|(\nabla_3 \beta, \nabla_3 \rho, \nabla_3 \sigma)\|_{L^2_{(sc)}(H_u)}^2 + \|(\nabla_3\rho, \nabla_3\sigma, \nabla_3\betab) \|_{L^2_{(sc)}({\Hb}_{\ub})}^2 \lesssim c(\Izero)  + c(\Izero) (\R+\Rb)^{\frac{7}{4}} + C \delta^{\frac{1}{16}}.
\end{equation}
Considering $\|\nabla_3\betab \|_{L^2_{(sc)}({\Hb}_{\ub})}$ in \eqref{Energy Estimates part B}, combined \eqref{NBE_Lb_betab} and standard elliptic estimates for Hodge systems, we have
\begin{equation*} \label{nabla alphab on Hb}
\|\nabla \alphab \|_{L^2_{(sc)}({\Hb}_{\ub})} \lesssim c(\Izero)  + c(\Izero) (\R+\Rb)^{\frac{7}{8}} + C \delta^{\frac{1}{32}}.
\end{equation*}
Considering $\nabla_3 \rho$ and $\nabla_3 \sigma$ in \eqref{Energy Estimates part B}, combined \eqref{NBE_Lb_rho}, \eqref{NBE_Lb_sigma} and elliptic estimates, we derive
\begin{equation*}
\|\nabla \betab \|_{L^2_{(sc)}({H}_{u})} +\|\nabla \betab \|_{L^2_{(sc)}({\Hb}_{\ub})} \lesssim c(\Izero)  + c(\Izero) (\R+\Rb)^{\frac{7}{8}} + C \delta^{\frac{1}{32}}.
\end{equation*}
Hence,
\begin{equation}\label{Close Bootstrap 2}
\|\nabla \betab\|_{L^2_{(sc)}({H}_{u}^{(0,u)})} + \|(\nabla \betab, \nabla \alphab)\|_{L^2_{(sc)}({\Hb}_{\ub}^{(0,u)})} \lesssim c(\Izero)  + c(\Izero) (\R+\Rb)^{\frac{7}{8}} + C \delta^{\frac{1}{32}}.
\end{equation}

We combine \eqref{estimates for nabla_4 alpha on H}, \eqref{estimates for nabla_3 alphab on Hb}, \eqref{Close Bootstrap 1} and \eqref{Close Bootstrap 2} to derive that
\begin{equation*}
 \R + \Rb \lesssim c(\Izero) (1+ \R^{\frac{7}{8}}) +C\delta^{\frac{1}{32}}.
\end{equation*}
In view of Section \ref{structure of the proof}, this completes the proof of the \textbf{Main Estimates} of the paper.





\begin{thebibliography}{99}

\bibitem{B-Z} L. Bieri and N. Zipser, \textit{Extensions of the Stability Theorem of the Minkowski Space in General Relativity
 }, Studies in Advanced Mathematics, AMS/IP, 2009
Mathematical Soc. 2009.

\bibitem{Ch} D. Christodoulou, \textit{The Formation of Black Holes in General Relativity}, Monographs in Mathematics, European Mathematical Soc. 2009.

\bibitem{Ch-K} D. Christodoulou and S. Klainerman, \textit{The Global Nonlinear Stability of Minkowski space}, Princeton Mathematical Series 41, 1993.

\bibitem{K-R-09} S. Klainerman and I. Rodnianski, \textit{On the Formation of Trapped Surfaces},
 arXiv:0912.5097

\bibitem{K-R-10} S. Klainerman and I. Rodnianski, \textit{On emerging scarred surfaces for the Einstein vacuum equations },
 arXiv:1002.2656

\bibitem{R-T} M. Reiterer and E. Trubowitz \text{Strongly focused gravitational waves}, arXiv:0906.3812

\end{thebibliography}
\end{document}